\newtheorem{thm}{Theorem}[section]
\newtheorem{lem}[thm]{Lemma}
\newtheorem{prop}[thm]{Proposition}
\newtheorem{remark}[thm]{Remark}
\newtheorem{defi}[thm]{Definition}
\theoremstyle{rmk}
\newcommand{\e} {\varepsilon}
\newcommand{\p} {\textnormal{\textsf{P}}}
\newcommand{\E} { \textnormal{\textsf{E}}}
\newcommand{\PP} { \mathbb{P} }
\newcommand{\N} { \mathbb{N} }
\newcommand{\Z} { \mathbb{Z} }
\newcommand{\R} { \mathbb{R} }
\newcommand{\var}{\mathop{\mathsf{Var}}}
\begin{document}

\title[Collisions of several walkers in recurrent random environments]
{Collisions of several walkers in recurrent random environments}
\date{\today}

\author{Alexis Devulder}
\address{Laboratoire de Math\'ematiques de Versailles, UVSQ, CNRS, Universit\'e Paris-Saclay,
78035 Versailles, France.}
\email{devulder@math.uvsq.fr }

\author{Nina Gantert}
\address{Technische Universit\"at M\"unchen,
Fakult\"at f\"ur Mathematik,
85748 Garching,
Germany }
\email{gantert@ma.tum.de}

\author{Fran\c{c}oise P\`ene}
\address{Universit\'e de Brest and Institut Universitaire de France,
LMBA, UMR CNRS 6205, 29238 Brest cedex, France}
\email{francoise.pene@univ-brest.fr}

\maketitle

\begin{abstract}

We consider $d$ independent walkers on $\Z$, $m$ of them performing
simple symmetric
random walk and $r= d-m$ of them performing recurrent RWRE (Sinai walk), in $I$ independent random environments. We show that the product is recurrent, almost surely, if and only if $m\leq 1$ or $m=d=2$.
In the transient case with $r\geq 1$, we  prove that the walkers meet infinitely often, almost surely, if and only if $m=2$ and $r \geq I= 1$. In particular, while $I$ does not have an influence for the recurrence or transience, it does play a role for the probability to have infinitely many meetings. To obtain these statements, we prove two subtle localization results for a single walker in a recurrent random environment, which are of independent interest.

\end{abstract}

\section{Introduction and statement of the main results}
Recurrence and transience of products of simple symmetric random walks on $\Z^d$ is well-known since the works of P\'olya \cite{Polya}.
If the product of several walks is transient, one may ask if they meet infinitely often. It is also well-known and goes back to Dvoretzky and Erd\"os, see  (\cite{Dvoretzky_Erdos}, p. 367)
that $3$ independent simple symmetric random walks (SRW) in dimension $1$ meet infinitely often almost surely
while $4$ walks meet only finitely often, almost surely. In fact,  P\'olya's original interest in recurrence/transience of simple random walk came from a question about collisions of two independent walkers on the same grid, see \cite {PolyaInc}, ``Two incidents''.

The classical topic of meetings/collisions of two or more walkers walking on the same graph has found recent interest, see \cite{KrishnapurPeres}, \cite{BarlowPeresSousi}, where the grid is replaced by more general graphs.
It is well-known that if a graph is recurrent for simple random walk, two independent walkers do not necessarily meet infinitely often, see \cite{KrishnapurPeres}. Since on a {\sl transitive} recurrent graph,
two independent walkers do meet infinitely often, almost surely, see \cite{KrishnapurPeres},  the ``infinite collision property'' describes how far the recurrent graph is from being transitive.
For motivation from physics, see \cite{CampariCassi}.

We investigate this question for products of recurrent random walks in random environment (RWRE)
and of simple symmetric random walks on $\Z$. It is known already that, for any $n$, a product of $n$ independent RWRE
in $n$ i.i.d. recurrent random environments is recurrent, see \cite{Z01}, and that $n$ independent walkers in the same recurrent random environment meet infinitely often in the origin,
see \cite{NMFa}.
Here, we consider several walkers each one performing either a Sinai walk or a simple symmetric random walk, with the additional twist that not all Sinai walkers are necessarily using the same environment.

Let $d,m,r$ be nonnegative integers such that $m+r=d\ge 1$.
We consider $d$ walkers, $m$ of them performing SRW $S^{(1)},...,S^{(m)}$ and the $r$ others
performing random walks $Z^{(1)},...,Z^{(r)}$ in $I$ independent random environments, with $I\le r$.
More precisely, we consider $r$ collections
of i.i.d. random variables $\omega^{(1)}:=\big(\omega^{(1)}_x\big)_{x\in\mathbb Z},\dots,
\omega^{(r)}:=\big(\omega^{(r)}_x\big)_{x\in\mathbb Z}$, taking values in $(0,1)$ and
 defined on the same probability space $(\Omega,\mathcal F, \p)$,  such that
$\omega^{(1)},...,\omega^{(I)}$
are independent and such that the others are exact copies of some of these $I$ collections,
i.e., for every $j\in\{I+1,...,r\}$, there exists an index $J_j\in\{1,...,I\}$ such that $\omega^{(j)}\equiv\omega^{(J_j)}$.
A realization of $\omega:=\left(\omega^{(1)},...,\omega^{(r)}\right)$
will be called an {\it environment}. Recall that we denote by
$$
\p \text{ the law of the environment } \omega.
$$
We set
$$
    Y_n
:=
    \big(S_n^{(1)},...,S_n^{(m)},Z_n^{(1)},...,Z_n^{(r)}\big),
\qquad
    n\in\N,
$$
and make the following assumptions.
Given $\omega=\left(\omega^{(1)},...,\omega^{(r)}\right)$ and
$x\in \mathbb Z^d$, under $P_\omega^x$, $S^{(1)},...,S^{(m)},Z^{(1)},...,Z^{(r)}$
are independent Markov chains such that $P_\omega^{x}(Y_0=x)=1$ and
for all $y\in\Z$ and $n\in\N$,
\begin{equation}\label{eqDefQuenchedLaw}
    P_{\omega}^x\big[S^{(i)}_{n+1}=y+1\big|S_n^{(i)}=y\big]
=
    \frac 12
=
    P_{\omega}^x\big[S^{(i)}_{n+1}=y-1\big|S_n^{(i)}=y\big],
\quad
   i\in\{1,...,m\},
\end{equation}
\begin{equation}\label{eqDefQuenchedLaw_2}
    P_{\omega}^x\big[Z^{(j)}_{n+1}=y+1\big|Z_n^{(j)}=y\big]
=
    \omega^{(j)}_{y}
=
    1-P_{\omega}^x\big[Z^{(j)}_{n+1}=y-1\big|Z_n^{(j)}=y\big],
\quad
    j\in\{1,...,r\}.
\end{equation}
We set $S^{(i)}:=\big(S_n^{(i)}\big)_n$ and $Z^{(j)}:=\big(Z_n^{(j)}\big)_n$
for every $i\in\{1,...,m\}$ and every $j\in\{1,...,r\}$.
Note that, for every $j$, $Z^{(j)}=\big(Z_n^{(j)}\big)_n$ is a {\it random walk on $\mathbb Z$ in the environment} $\omega^{(j)}$, and that the $S^{(i)}$'s are independent
SRW, independent of the $Z^{(j)}$'s and of their environments.
We call $P_\omega:=P_\omega^0$ the {\it quenched law}.
Here and in the sequel we write $0$ for the origin in  $\mathbb Z^d$.
We also define the {\it annealed law} as follows:
$$
    \mathbb P[\cdot]:=\int P_\omega[\cdot]\p(\text{d}\omega).
$$
Setting $\rho_k^{(j)}:=\frac{1-\omega_k^{(j)}}{\omega_k^{(j)}}$ for
$j\in\{1,...,r\}$ and $k\in\mathbb Z$, we assume moreover that there exists
$\varepsilon_0\in(0,1/2)$ such that for every $j\in\{1,...,r\}$,
\begin{equation}\label{eqHypothesesSinai1}
    \p\big[\omega_0^{(j)}\in[\varepsilon_0,1-\varepsilon_0]\big]=1,
\qquad
    \E\big[\log \rho_0^{(j)}\big]=0,
\qquad
    \sigma_j^2:= \E\big[(\log \rho_0^{(j)})^2\big]>0,
\end{equation}
where $\E$ is the expectation with respect to $\p$.
Under these assumptions, the $Z^{(j)}$ are RWRE, often called {\it Sinai's walks} due to the famous
result of \cite{S82}. Solomon \cite{S75} proved the recurrence of
$Z^{(j)}$ for $\p$-almost every environment.
We stress in particular that the assumption $ \sigma_j^2 > 0$ excludes the case of deterministic environments, hence when we say ``Sinai's walk'', we always refer to a random walk in a ``truly'' random environment.

Our first result concerns the recurrence/transience of $Y:=(Y_n)_n$. Recurrence of $Y$
means that $S^{(1)},....,S^{(m)},Z^{(1)},...,Z^{(r)}$ meet simultaneously
at 0 infinitely often. As explained previously, this result is known for SRW (i.e. if $m=d$) since \cite{Polya}
and more recently for RWRE (i.e. if $r=d$, that is, if $m=0$)
in the case where the environments $\omega^{(j)}$ are independent (i.e. $I=r=d$, see \cite{Z01,NMFa}) and in the case where the environment $\omega^{(j)}$ is the same for all the RWRE (i.e. $r=d,I=1$, see \cite{NMFa}). See also \cite{Gallesco} for related results.

\medskip

\begin{thm}\label{TheoremProductRW}
If $m\le 1$, or if $m=d=2$, then, for $\p$-almost every $\omega$,
the random walk $Y$ is recurrent with respect to $P^0_\omega$.
Otherwise, for $\p$-almost every $\omega$, the random walk $Y$ is transient with respect to $P^0_\omega$.
\end{thm}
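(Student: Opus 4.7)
For $\p$-a.e.\ $\omega$, $Y$ is a Markov chain under $P_\omega^0$, irreducible on the connected component of $0$ in the parity-restricted lattice. Hence $Y$ is $P_\omega^0$-recurrent if and only if $\sum_n P_\omega^0[Y_n=0]=\infty$. By the $P_\omega^0$-independence of the $d$ walkers,
\begin{equation*}
    P_\omega^0[Y_n=0] \;=\; \prod_{i=1}^m P[S^{(i)}_n=0]\;\cdot\;Q_\omega(n),
    \qquad Q_\omega(n):=P_\omega^0\bigl[Z^{(1)}_n=0,\dots,Z^{(r)}_n=0\bigr].
\end{equation*}
By the local CLT, $P[S^{(i)}_n=0]\asymp n^{-1/2}$ on the even integers, so the recurrence/transience dichotomy reduces to the $\p$-a.s.\ behavior of
\begin{equation*}
    \Sigma_\omega\;:=\;\sum_n n^{-m/2}\, Q_\omega(n).
\end{equation*}

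\textbf{Easy cases.} If $m\ge 3$, then $\Sigma_\omega\le \sum_n n^{-m/2}<\infty$, hence transience. If $m=d=2$ (so $r=0$, $Q_\omega\equiv 1$), then $(S^{(1)},S^{(2)})$ is a planar SRW, recurrent by P\'olya. If $m=1$ and $r=0$, then $Y$ is a one-dimensional SRW, recurrent. If $m=0$, one groups the Sinai walkers by environment index $J\in\{1,\dots,I\}$: within each class the techniques of \cite{NMFa} give recurrence of the corresponding sub-product, and the $\p$-independence of $\omega^{(1)},\dots,\omega^{(I)}$ allows one to combine the classes along the lines of \cite{Z01} to obtain $\sum_n Q_\omega(n)=\infty$ and hence recurrence of $Y$.

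\textbf{Transient case $m=2$, $r\ge 1$.} Using $Q_\omega(n)\le P_\omega^0[Z^{(1)}_n=0]$, one has $\Sigma_\omega\le \sum_n n^{-1}\,P_\omega^0[Z^{(1)}_n=0]$. The $\p$-a.s.\ finiteness of the right-hand side is the content of the first of the two ``subtle localization results'' for a single Sinai walker announced in the abstract; by Tonelli it is already implied by the annealed bound $\sum_n n^{-1}\,\PP[Z_n=0]<\infty$, itself obtained from a local refinement $\PP[Z_n=0]=O((\log n)^{-2})$ of Sinai's limit theorem $Z_n/(\log n)^2\Rightarrow V$. This yields transience of $Y$.

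\textbf{Recurrent case $m=1$, $r\ge 1$.} One needs the matching lower bound $\sum_n n^{-1/2}\,Q_\omega(n)=\infty$ $\p$-a.s. The plan is to produce, $\p$-a.s., a random sequence $(n_k)_k$ with $\sum_k n_k^{-1/2}=\infty$ (e.g.\ $n_k=O(k^2)$) along which $Q_\omega(n_k)\ge \delta_\omega>0$. The second subtle localization result provides such a sequence for a \emph{single} Sinai walker: infinitely many times $n$ at which $0$ sits at the bottom of a deep enough valley of the potential $V^{(J)}(x):=\sum_{\ell=1}^x \log\rho_\ell^{(J)}$, giving $P_\omega^0[Z^{(J)}_n=0]\ge \delta_\omega^{(J)}$. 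The main obstacle is to \emph{align} such times across the $I$ independent environments: exploiting the quantitative single-walker localization together with the $\p$-independence of $\omega^{(1)},\dots,\omega^{(I)}$, one extracts a common sequence on which $P_\omega^0[Z^{(J)}_{n_k}=0]\ge \delta_\omega^{(J)}$ for every $J=1,\dots,I$ simultaneously, so $Q_\omega(n_k)\ge \prod_J \delta_\omega^{(J)} > 0$ and $\Sigma_\omega\ge c\prod_J\delta_\omega^{(J)}\,\sum_k n_k^{-1/2}=\infty$. This alignment across independent environments, and the quantitative quenched lower bounds on $P_\omega^0[Z_n=0]$ it demands, is the principal technical difficulty beyond the known single-walker results.
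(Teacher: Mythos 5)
Your setup (recurrence criterion via $\sum_n P_\omega^0[Y_n=0]$, reduction to $\Sigma_\omega=\sum_n n^{-m/2}Q_\omega(n)$, and the easy cases $m\geq 3$, $m=d=2$, $m=1,r=0$) matches the paper. The two harder cases, however, are not resolved as written.

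For the transience case $m=2$, $r\geq 1$, the paper proves the needed quenched estimate directly: Proposition~\ref{MAJO} shows $\sum_n P_\omega^0[Z_n=0]/n<\infty$ for $\p$-a.e.\ $\omega$, via the bound $P_\omega^0[Z_n=0]\le 5\varepsilon_0^{-1}\exp[-(\log n)^{1-\eta}]$ obtained from the potential and the reversible measure. You instead propose to deduce this by Tonelli from an annealed local bound $\PP[Z_n=0]=O((\log n)^{-2})$. The Tonelli reduction is valid, but the annealed bound is asserted, not proved, and it does \emph{not} follow from Sinai's distributional theorem alone (a distributional limit for $Z_n/(\log n)^2$ says nothing local); one would need a genuine local limit theorem for Sinai's walk, which is itself a substantial result. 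Also, this estimate is not one of the ``two subtle localization results'' announced in the abstract; those (Propositions~\ref{LemmaLocalisationVerticale} and \ref{Lemmeannexe1Bis}) concern Theorem~\ref{simultaneousmeeting}, not Theorem~\ref{TheoremProductRW}.

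For the recurrence case $m\le 1$, $r\ge 1$ there is a genuine gap. You want a sequence $n_k=O(k^2)$ on which $Q_\omega(n_k)\ge\delta_\omega>0$, and you cite the second localization result as providing this for a single walker. But Proposition~\ref{Lemmeannexe1Bis} localizes the walker at $\widehat b(N)$, the bottom of the central valley, which is generically far from $0$; it does \emph{not} give $P_\omega^0[Z_n=0]\ge\delta$. The set of times at which $0$ is near the valley bottom is sparse (intervals in roughly geometric progression), and you neither establish the density claim $n_k=O(k^2)$ nor show the constant lower bound can be aligned across $I$ independent environments — you explicitly leave this as ``the principal technical difficulty.'' The paper's actual argument circumvents this: it imports from \cite{NMFa} (Propositions 3.2, 3.4, (3.22)) events $\Gamma(L,\delta)$ with $\p\big[\cap_N\cup_{L\ge N}\Gamma(L,\delta)\big]=1$, on which $P_{\omega^{(j)}}^0[Z_{2k}^{(j)}=0]\ge C(\delta)e^{-3\delta L}$ uniformly for $k$ in the window $[e^{3\delta L},e^{(1-2\delta)L}]$. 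The lower bound decays with $L$, but the window is long, so $\sum_n P_\omega^0[Y_n=0]\ge c_1(\delta)e^{[(1-2\delta)/2-3\delta r]L}\to\infty$ for $\delta$ small. Allowing a decaying lower bound over a long window is what makes the alignment across independent environments tractable; demanding a constant lower bound at $0$ does not.
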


\medskip

In particular, a product of two recurrent RWRE and one SRW is recurrent, while a product of
two SRW and one recurrent RWRE is transient.

When $Y$ is transient, a natural question is the study of the
simultaneous meetings (i.e., collisions)   of
$S^{(1)},....,S^{(m)},Z^{(1)},...,Z^{(r)}$. That is, we would like to extend the results of
\cite{Polya,Dvoretzky_Erdos} to the case in which some of the random walks are in random environments (when $r\geq 1$).
We recall that when $r=0$, the number of collisions is, by \cite{Polya,Dvoretzky_Erdos}, almost surely infinite
if $m\leq 3$ and almost surely finite when $m\geq 4$.
Interestingly, compared to Theorem \ref{TheoremProductRW}, the behaviour depends on
whether $I=1$  (when the RWRE are all in the same environment)
or $I\ge 2$ (at least two RWRE are in independent environments).

\medskip

\begin{thm}\label{simultaneousmeeting}
We distinguish the $3$ following different cases.
\begin{itemize}
\item[(i)]
If $m\ge 3$ and $r\ge 1$, then, for $\p$-almost every environment $\omega$,
$$
     P^0_\omega\big[S_n^{(1)}=S_n^{(2)}=S_n^{(3)}=Z_n^{(1)}\mbox{ infinitely often}\big]
=
    0,
$$
i.e. almost surely, the walks
$S^{(1)},S^{(2)},S^{(3)},Z^{(1)}$
meet simultaneously only a finite number of times. A fortiori,
$S^{(1)},\dots ,S^{(m)},Z^{(1)}, \dots Z^{(r)}$
also meet simultaneously only a finite number of times.
\item[(ii)] If $m=2$ and $r\geq I=1$, then for $\p$-almost every environment $\omega$,
$$
     P^0_\omega\big[S_n^{(1)}=S_n^{(2)}=Z_n^{(1)}=...=Z_n^{(r)}\mbox{ infinitely often}\big]
=
    1,
$$
i.e. almost surely, the walks
$S^{(1)},S^{(2)},Z^{(1)},...,Z^{(r)}$
meet simultaneously infinitely often.
\item[(iii)] If $m=2$ and $r\geq I\ge 2$, then for $\p$-almost every environment $\omega$,
$$
    P^0_\omega\big[S_n^{(1)}=S_n^{(2)}=Z_n^{(1)}=Z_n^{(2)}\mbox{ infinitely often}\big]
=
    0,
$$
i.e. almost surely, the walks
$S^{(1)},S^{(2)},Z^{(1)},Z^{(2)}$,
and a fortiori the walks
$S^{(1)},S^{(2)}$, $Z^{(1)},...,Z^{(r)}$,
meet simultaneously only a finite number of times.
\end{itemize}
\end{thm}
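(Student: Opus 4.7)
The proof splits along the three regimes; in each, the asymmetry between the diffusive scale $\sqrt n$ of the SRW's and the logarithmic scale $(\log n)^2$ of the Sinai walks is the driving mechanism. Throughout I rely on the independence of the SRW's from everything else, and on the uniform local limit bound $P^0[S_n^{(i)}=z]\leq C/\sqrt n$.

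For case (i), I condition on $Z^{(1)}$ and use independence to write
$$P_\omega^0\bigl[S_n^{(1)}=S_n^{(2)}=S_n^{(3)}=Z_n^{(1)}\bigr]=\sum_{z\in\Z}P_\omega^0\bigl[Z_n^{(1)}=z\bigr]\,\bigl(P^0[S_n=z]\bigr)^{3}\leq \frac{C}{n^{3/2}}.$$
Summability in $n$ and Borel--Cantelli give only finitely many simultaneous meetings, $\p$-almost surely. The further conclusion for $m\ge 3$, $r\ge 1$ is then immediate by inclusion of events.

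For case (iii), the same factorisation gives
$$P_\omega^0\bigl[S_n^{(1)}=S_n^{(2)}=Z_n^{(1)}=Z_n^{(2)}\bigr]\leq \frac{C}{n}\sum_{z}P_{\omega^{(1)}}^0[Z_n=z]\,P_{\omega^{(2)}}^0[Z_n=z].$$
Taking $\p$-expectation and using $\omega^{(1)}\perp\omega^{(2)}$ reduces the right-hand sum to $\sum_z\PP[Z_n=z]^2\leq \sup_z\PP[Z_n=z]$. Here I would invoke the first ``subtle localization'' result announced in the abstract, which should yield an annealed pointwise bound $\sup_z\PP[Z_n=z]\leq C/(\log n)^{\alpha}$ with some $\alpha>1$ (the expected scaling being $\alpha=2$, consistent with the Kesten--Golosov limit for $Z_n/(\log n)^2$). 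Summability of $1/\!\left(n(\log n)^\alpha\right)$ and Borel--Cantelli then finish the argument, first annealed and then, by Fubini, quenched for $\p$-a.e.\ $\omega$.

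Case (ii) is the delicate one and requires both a matching lower bound and a second-moment argument. Let $A_n$ denote the simultaneous meeting event at time $n$. Since all $r$ Sinai walks share the environment $\omega^{(1)}$, writing $b_n$ for the Kesten--Golosov valley bottom (so $|b_n|=O((\log n)^2)$) they localize jointly at $b_n$: by Golosov's theorem and ergodicity, the set $\mathcal N:=\{n:P_{\omega^{(1)}}^0[Z_n=b_n]\geq\e\}$ has positive lower density for $\p$-a.e.\ $\omega^{(1)}$, for a suitable $\e>0$. For $n\in\mathcal N$,
$$P_\omega^0[A_n]\geq \bigl(P^0[S_n=b_n]\bigr)^2\bigl(P_\omega^0[Z_n=b_n]\bigr)^r\geq \frac{c\,\e^r}{n},$$
so $\sum_n P_\omega^0[A_n]=\infty$ almost surely. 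To pass from divergence of the sum to $A_n$ occurring infinitely often I would apply the Kochen--Stone lemma, which requires
$$\sum_{m,n\leq N}P_\omega^0[A_m\cap A_n]\leq C\Bigl(\sum_{n\leq N}P_\omega^0[A_n]\Bigr)^2$$
along a sufficiently dense subsequence $N=N_k$. The SRW two-time factor splits cleanly by the Markov property, so the burden is on the two-time Sinai quantity $P_\omega^0[Z_m=y,\,Z_n=z]$: one needs that, after hitting $b_m$ at time $m$, the Sinai walks re-localize at the new valley bottom $b_n$ with quantitative decorrelation from the past. This is the \emph{main obstacle} of the proof, and precisely the role of the second ``subtle localization'' statement announced in the abstract; once it is in hand, Kochen--Stone gives $\p$-a.s.\ infinite meetings, the quenched $0$--$1$ law upgrading the positive probability to probability one.
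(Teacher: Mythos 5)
Your cases (i) and (iii) are essentially correct and follow the paper's strategy. In (i) the bound $P_\omega^0[A_n]\le C n^{-3/2}$ and Borel--Cantelli is exactly what is done. In (iii) you reformulate the needed annealed input as $\sup_z \PP[Z_n=z]\le C(\log n)^{-\alpha}$ with $\alpha>1$; the paper instead bounds $\PP[Z_n^{(1)}=Z_n^{(2)}]$ directly (Proposition \ref{Lemmeannexe2}), but since $\PP[Z_n^{(1)}=Z_n^{(2)}]=\sum_z(\PP[Z_n=z])^2\le\sup_z\PP[Z_n=z]$ these are two ways of saying the same thing. Be warned, though, that the pointwise bound is not a triviality: as the paper explains before \eqref{InegProbaHT0}, localization around a single valley bottom fails to give $\alpha>1$ because with probability $\asymp 1/\log n$ the central valley is barely deep enough; the paper must localize $Z_n$ in a union of low-potential sets spanning several valleys (Proposition \ref{LemmaLocalisationVerticale}). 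Your proposal implicitly assumes such a result is available, which is fine, but the content lies there.

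The real gap is in case (ii), where you identify a ``main obstacle'' that does not in fact exist. You claim the second-moment bound required by Kochen--Stone forces one to control the two-time quantity $P_\omega^0[Z_m=y,\,Z_n=z]$ and prove quantitative re-localization/decorrelation of the Sinai walks between times $m$ and $n$. The paper's proof avoids this entirely: in the bound for $P_\omega^x[A_n\cap A_m]$, the Sinai factor $P_\omega^{(k,\dots,k)}[Z_{m-n}^{(1)}=\cdots=Z_{m-n}^{(r)}]$ is simply bounded by $1$, while the two SRW's contribute $\le C^4/(n(m-n))$ via the Markov property and the local limit theorem. Summing over $1\le n<m\le N$ gives $\sum_{n,m\le N}P_\omega^x[A_n\cap A_m]\le 3C^4(\log N)^2$, which matches the square of the first-moment lower bound $\sum_{n\le N}P_\omega^x[A_n]\gtrsim c(\omega)\log N$ (along a subsequence, by Proposition \ref{Lemmeannexe1}). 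So the ratio in Kochen--Stone is bounded below with no decorrelation input whatsoever; only the first-moment lower bound needs the localization of the Sinai walks in a common valley. A further (more minor) imprecision on your side: the localization result delivers $P_\omega[Z_n=\widehat b(N)]\ge c(\delta)$ for even $n\in[N^{1-\e},N]$ on a good environment event, which yields positive lower \emph{logarithmic} density of good times along a subsequence (i.e.\ the $\limsup$ of $\frac{1}{\log N}\sum_{n\le N}\frac{1}{n}P_\omega[A_n]$ is positive), not positive linear density as you assert; but this is exactly what Kochen--Stone needs, so the substance of the argument is salvageable once the second-moment misconception is removed.
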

This last result can be summarized in the following manner. Assume that $r\geq 1$ and that $Y$ is transient (i.e. $m\ge 2$ and $r\ge 1$), then
$S^{(1)},...,S^{(m)},Z^{(1)},...,Z^{(r)}$ meet simultaneously infinitely often  if and only if $m=2$
and $I=1$.
Hence our results cover collisions of an arbitrary number of random walks in equal or independent random
(or deterministic) recurrent environments.

\begin{remark} The results of Theorem \ref{simultaneousmeeting} remain true if
the simple random walks are replaced by random walks on $\mathbb Z$ with i.i.d. centered increments with finite and strictly positive variance. However, we write the proof of this theorem only in  the case of SRW to keep the proof more readable and less technical.
\end{remark}

The case of transient RWRE in the same
subballistic random environment is investigated in \cite{DGP18} (in preparation).

In order to demonstrate Theorem \ref{simultaneousmeeting}, we prove
the two following propositions.
The first one deals with two independent recurrent RWRE in two independent environments.
\begin{prop}\label{Lemmeannexe2}
Assume $r\geq I\geq 2$.
For every $\varepsilon>0$, $\mathbb P\big[Z_n^{(1)}=Z_n^{(2)}\big]
=O\left((\log n)^{-2+\varepsilon}\right)$.
\end{prop}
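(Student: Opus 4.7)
The plan is to reduce the two-walk collision probability to a uniform one-walk single-point estimate. Since $\omega^{(1)}$ and $\omega^{(2)}$ are $\p$-independent and the two walkers are conditionally independent given the environments,
\begin{align*}
\mathbb P\big[Z_n^{(1)}=Z_n^{(2)}\big]
&=\sum_{k\in\Z}\E\big[P_{\omega^{(1)}}\big(Z_n^{(1)}=k\big)\,P_{\omega^{(2)}}\big(Z_n^{(2)}=k\big)\big]\\
&=\sum_{k\in\Z}\big(\mathbb P[Z_n=k]\big)^2\;\le\; \sup_{k\in\Z}\mathbb P[Z_n=k],
\end{align*}
because $Z^{(1)}$ and $Z^{(2)}$ have the same annealed law and $\sum_k\mathbb P[Z_n=k]=1$. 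It therefore suffices to establish, for every $\varepsilon>0$, the one-walk uniform bound $\sup_{k\in\Z}\mathbb P[Z_n=k]=O\big((\log n)^{-2+\varepsilon}\big)$.

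To prove this uniform estimate, I would combine Sinai--Golosov localization with an annealed density bound for the valley bottom. Let $b_n=b_n(\omega)$ be the bottom of the deepest valley of the potential $V(k)=\sum_{0<i\le k}\log\rho_i^{(1)}$ (with the usual sign convention for $k\le 0$) in $[-C(\log n)^2,C(\log n)^2]$ that contains $0$. Fix $\delta=\varepsilon/2$. On a carefully chosen good environment event $G_n$ with $\p[G_n^c]=o\big((\log n)^{-2}\big)$ (non-degenerate depth separations, uniform control of overshoots and of the barrier heights), the Sinai--Golosov type localization yields the quenched estimate $P_\omega\big(|Z_n-b_n|>(\log n)^{\delta}\big)=o\big((\log n)^{-2}\big)$. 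Decomposing
$$\mathbb P[Z_n=k]\;\le\; \p\big[G_n\cap\{|b_n-k|\le (\log n)^{\delta}\}\big]+\E\big[P_\omega\big(|Z_n-b_n|>(\log n)^{\delta}\big)\,\mathbf 1_{G_n}\big]+\p[G_n^c],$$
the last two contributions are $o\big((\log n)^{-2}\big)$ by construction. For the first term I invoke the convergence of $b_n/(\log n)^2$ to Kesten's distribution, which has a bounded density on $\R$, so $\p[b_n=j]\le C(\log n)^{-2}$ uniformly in $j\in\Z$; summing over the $O((\log n)^{\delta})$ integers $j$ with $|j-k|\le (\log n)^{\delta}$ gives $O\big((\log n)^{\delta-2}\big)=O\big((\log n)^{-2+\varepsilon/2}\big)$, which suffices.

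The main obstacle is this uniform-in-$k$ single-point bound for a single Sinai walk. The delicate point is to build the event $G_n$ strongly enough to deliver simultaneously a Golosov-type localization at the polylogarithmic scale $(\log n)^\delta$ and the matching annealed one-point estimate $\p[b_n=j]=O((\log n)^{-2})$, while still keeping $\p[G_n^c]=o((\log n)^{-2})$. This quantitative refinement of Sinai's theorem is presumably one of the "subtle localization results" advertised in the abstract; the polylogarithmic localization window is precisely what costs the $\varepsilon$ slack in the exponent.
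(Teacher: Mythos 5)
Your reduction is correct and even somewhat cleaner than what the paper does: since $I\geq 2$ the environments $\omega^{(1)},\omega^{(2)}$ are $\p$-independent, so indeed
$\mathbb P[Z_n^{(1)}=Z_n^{(2)}]=\sum_k\big(\mathbb P[Z_n=k]\big)^2\le \sup_k\mathbb P[Z_n=k]$,
and it would suffice to prove the uniform one-walk bound $\sup_k\mathbb P[Z_n=k]=O\big((\log n)^{-2+\e}\big)$. This is a nice simplification: the paper instead estimates $\p\big[\Xi_n(W^{(1)})\cap\Xi_n(W^{(2)})\neq\emptyset\big]$ directly, which achieves the same effect but in a less streamlined way.

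However, the way you propose to prove the one-walk bound has a genuine gap, and it is precisely the one the paper points out before stating Lemma \ref{LemmaCardinalHTiPetits}. You want a good environment event $G_n$ with $\p[G_n^c]=o\big((\log n)^{-2}\big)$ on which a \emph{single-bottom} localization $P_\omega\big(|Z_n-b_n|>(\log n)^{\delta}\big)=o\big((\log n)^{-2}\big)$ holds. But \eqref{InegProbaHT0} shows that with $\p$-probability of order at least $(\log^{(2)} n)(\log n)^{-1}$, the central valley of the potential containing $0$ has height only marginally above $\log n$ (within an additive $O(\log^{(2)} n)$). On such environments the expected exit time from that valley is comparable to $n$, so the quenched probability that $Z_n$ has already escaped to a neighbouring valley is bounded away from $0$, not $o\big((\log n)^{-2}\big)$. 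You cannot put these environments into $G_n^c$ without making $\p[G_n^c]$ of order $1/\log n$, and you cannot keep them in $G_n$ and retain the single-bottom localization. This is exactly why the paper abandons pinning $Z_n$ to a single bottom $b_n$ and instead localizes into the set $\Xi_n(W)$, which is the union of the low-potential regions of \emph{several} neighbouring valleys: on the rare but non-negligible event that the central valley is marginal, the walker may move one valley over, but it still lands in $\Xi_n(W)$, and the proof of Proposition~\ref{LemmaLocalisationVerticale} (especially Lemma \ref{lemE2compl}) is devoted to tracking exactly this two-valley scenario.

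A secondary, smaller gap: you invoke weak convergence of $b_n/(\log n)^2$ to a distribution with a bounded density to conclude $\p[b_n=j]\le C(\log n)^{-2}$ uniformly in $j$. Weak convergence alone does not yield pointwise mass bounds at the lattice scale; you would need a local-limit-type estimate. The paper sidesteps this entirely: in Lemma \ref{LemmaIntersectionBottoms2IndependentPotentials} the uniform-in-$k$ bound $\p[k\in\Xi_n(W)]=O\big((\log^{(2)}n/\log n)^2\big)$ is obtained directly from Brownian first-passage probabilities (the probability that, starting from $W(k)$, the potential rises by $\approx\log n$ before falling by $\approx\log^{(2)}n$ on both sides), which is elementary and rigorous. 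If you keep your reduction to $\sup_k\mathbb P[Z_n=k]$ but replace your single-valley localization by the paper's $\Xi_n$-localization and its pointwise membership bound, you obtain a valid and slightly shorter proof than the one in the paper.
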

The second proposition deals with $r$ independent recurrent RWRE in the same environment.
\begin{prop}\label{Lemmeannexe1}
Assume $r> I=1$.
For $\p$-almost every $\omega$, there exists $c(\omega)>0$ such that, for every
$(y_1,...,y_r)\in[(2\Z)^r\cup (2\Z+1)^r]$, we have
$$
    \limsup_{N\to+\infty}\frac{1}{\log N}
    \sum_{n=1}^N\frac 1n\sum_{k\in\mathbb Z}\prod_{j=1}^rP_\omega^{y_j}[Z_n^{(j)}=k]
\ge
    c(\omega).
$$
\end{prop}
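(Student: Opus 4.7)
The plan is to combine two ingredients: a strong quenched localization statement for a single Sinai walk, and the fact that since $I=1$ all $r$ walks share the same environment. Writing $\omega := \omega^{(1)} = \cdots = \omega^{(r)}$ and $p_n^\omega(y,k) := P_\omega^{y}[Z_n = k]$ for the single-walk quenched transition probabilities, the quantity to be bounded becomes $\sum_{k \in \Z} \prod_{j=1}^r p_n^\omega(y_j, k)$, which is a deterministic function of $\omega$, $n$, and $(y_1, \ldots, y_r)$ once the common environment has been fixed.

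The key ingredient I will rely on is a quenched localization result of the kind announced in the abstract as one of the ``two subtle localization results'': for $\p$-a.e.\ $\omega$ there exist a measurable map $n\mapsto b_n(\omega)\in\Z$ and a set $A(\omega)\subset\N$ of positive logarithmic density, i.e.\ $\liminf_{N\to\infty}\frac{1}{\log N}\sum_{n\in A(\omega),\ n\le N}\frac{1}{n}>0$, such that $p_n^\omega(0,b_n(\omega))\ge c_0$ for all $n\in A(\omega)$, for some deterministic $c_0>0$. Sinai's theorem and Golosov's refinement only supply convergence in distribution of $Z_n/(\log n)^2$, which is too weak; what is needed here is a \emph{quenched pointwise} lower bound on the mass at a localization site, valid for a set of times of positive logarithmic density.

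The next step is to transfer this bound to arbitrary starting points. Since any fixed $y\in\Z$ is much smaller than the typical radius $(\log n)^2$ of the walk's range once $n$ is large, a first-passage argument together with the strong Markov property should yield: for every $y\in\Z$ and $\p$-a.e.\ $\omega$, there is $n_0(y,\omega)$ such that for all $n\ge n_0(y,\omega)$ with $n\in A(\omega)$,
$$p_n^\omega\bigl(y,\,b_n(\omega)+\eta_{y,n}\bigr)\ \ge\ c_0/2,$$
where $\eta_{y,n}\in\{-1,0,1\}$ is a parity correction forced by $y+n\equiv b_n(\omega)+\eta_{y,n}\pmod{2}$; the point is that a walk started at $y$ hits $b_n(\omega)$ at some time $\tau\ll n$ and then redistributes as in the walk started at $0$, up to a possible one-step parity shift. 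Crucially, $\eta_{y,n}$ depends on $y$ only through its parity, so when $(y_1,\ldots,y_r)$ all lie in $2\Z$ or all in $2\Z+1$ the corrections coincide, giving a common localization site $k_n(\omega):=b_n(\omega)+\eta_n$.

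For $n\in A(\omega)$ with $n\ge\max_j n_0(y_j,\omega)$, bounding the sum below by its $k_n(\omega)$-term and applying the previous inequality factor by factor gives
$$\sum_{k\in\Z}\prod_{j=1}^r p_n^\omega(y_j,k)\ \ge\ \prod_{j=1}^r p_n^\omega\bigl(y_j,k_n(\omega)\bigr)\ \ge\ (c_0/2)^r.$$
Removing finitely many integers from $A(\omega)$ does not change its logarithmic density, so the $\limsup$ in the proposition is bounded below by $(c_0/2)^r$ times the logarithmic density of $A(\omega)$, which is strictly positive and independent of $(y_1,\ldots,y_r)$; this produces the constant $c(\omega)$. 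I expect the hard step to be the localization result itself: producing a random site with a quenched pointwise lower bound along a set of times of positive logarithmic density, and transferring it uniformly in the starting point; everything that follows is an elementary consequence of independence of the walkers conditional on the common environment and the parity matching.
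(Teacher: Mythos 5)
Your proposal matches the paper's approach: reduce the proposition to a quenched localization lemma (the paper's Proposition~\ref{Lemmeannexe1Bis}), transfer from the origin to arbitrary starting points by a first-passage argument, and multiply the $r$ factors using that all walkers share the same environment and that the parities agree. You correctly identify the localization lemma as the hard ingredient; the paper proves it by a coupling between the walk started at the localization site $\widehat b(N)$ and a reflected walk in stationarity, valid uniformly for even $n\in[N^{1-\e},N]$ on environment events $\Delta_N(\delta)$ with $\liminf_N\p[\Delta_N(\delta)]\ge 1-\delta$. Two corrections to the way you state the deferred lemma. First, you require the good set of times $A(\omega)$ to have positive \emph{lower} logarithmic density; only positive \emph{upper} density is needed for the $\limsup$ in the proposition, and only the upper version is obtained here, since the argument proceeds through $\p[\limsup_N\Delta_N(\delta)]\ge 1-\delta$, i.e.\ membership for infinitely many $N$ rather than eventually all $N$, which yields a union of good windows $[N^{1-\e},N]$ that recurs but may have long gaps. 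Second, you posit a single deterministic $c_0>0$; in the paper the constant $c(\delta)$ degrades as $\delta\to 0$, and one must let $\delta\to 0$ to cover $\p$-almost every $\omega$, so the final constant is genuinely $\omega$-dependent --- which is precisely what the proposition's $c(\omega)$ allows. Neither point changes your downstream reasoning, and your parity discussion is sound (the paper sidesteps the shift $\eta_{y,n}$ simply by taking $\widehat b(N)$ even from the outset).
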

These two propositions are based on two new localization results for recurrent RWRE,
which are of independent interest.
These two localization results use the {\sl potential} of the environment (see \eqref{eqDefPotentialV}) and its {\sl valleys}, these quantities were introduced by Sinai in \cite{S82} and are crucial for the investigation of the RWRE.

In the first one, stated in Proposition \ref{LemmaLocalisationVerticale} and used to prove Proposition \ref{Lemmeannexe2},
we localize a recurrent RWRE at time $n$ with (annealed) probability $1-(\log n)^{-2+\varepsilon}$ for $\varepsilon>0$, whereas
previous localization results for such RWRE were with probability $1-o(1)$
(see \cite{S82}, \cite{Golosov84}, \cite{KTT}, \cite{Bovier_Faggionato} and \cite{Freire}),
or with probability $1-C\big(\frac{\log\log\log n}{\log \log n}\big)^{1/2}$ for some $C>0$
(see \cite{Andreoletti_Alternative}, eq. (2.23)), and they localize the RWRE inside one valley.
In order to get our more precise localization probability, which is necessary to apply the Borel-Cantelli
lemma in the proof of Item (iii) of Theorem \ref{simultaneousmeeting},
we localize the RWRE
in an area of low potential defined with several valleys instead of just one. To this aim, we
study and describe typical trajectories of the recurrent RWRE into these different valleys.

In our second localization result, stated in Proposition \ref{Lemmeannexe1Bis} and used to prove Proposition \ref{Lemmeannexe1},
we prove that for large $N\in\N$, with high probability on $\omega$ (for $\p$),
the quenched probability $P_\omega[Z_n=b(N)]$ is larger than a positive constant,
uniformly for any even $n\in[N^{1-\e},N]$ for some $\e>0$, where $b(N)$
is the (even) bottom of some valley of the potential $V$ of a recurrent RWRE $Z$ (defined in \eqref{eqDefWidehatbN}).
In order to get this uniform probability estimate, we use a method different from that of previous localization results,
based on a coupling between recurrent RWRE.

The article is organized as follows. In Section \ref{SinaiEstimate},
we give an estimate on the return probability of recurrent RWRE, see Proposition \ref{MAJO}, which is of independent interest.
Our main results for direct products of walks are proved
in Section \ref{sec:product}. The proofs concerning the simultaneous meetings
of random walks are based on the above-mentioned two key localization results for recurrent RWRE, proved in Sections \ref{independentenv}
and \ref{sameenv}.

%
%
%
%
%
%
\section{A return probability estimate for the rwre}\label{SinaiEstimate}
We consider a recurrent one dimensional RWRE $Z=(Z_n)_n$ in the random environment $\omega=(\omega_x)_{x\in\mathbb Z}$,
where the $\omega_x\in(0,1)$, $x\in\Z$,  are i.i.d.
(that is, $Z_0=0$ and \eqref{eqDefQuenchedLaw_2} is satisfied with $Z$ and $\omega$ instead of $Z^{(j)}$ and $\omega^{(j)}$).
 We assume the existence
of $\varepsilon_0\in(0,1/2)$ such that
\begin{equation}\label{eqHypothesesSinai}
    \p[\omega_0\in[\varepsilon_0,1-\varepsilon_0]]=1,
\qquad
    \E[\log \rho_0]=0,
\qquad
    \E[(\log \rho_0)^2]>0,
\end{equation}
where $\rho_k:=\frac{1-\omega_k}{\omega_k}$, $k\in\Z$.
The following result completes \cite[Theorem 1.1]{NMFa} which says that,
for every $0\leq\vartheta<1$, we have for $\p$-almost every environment $\omega$,
$$
    \sum_{n\ge 1} \frac{P_\omega^0[Z_n=0]}{n^\vartheta}
=
    \infty.
$$
\begin{prop}\label{MAJO}
For $\p$-almost every environment $\omega$,
$$
    \sum_{n\ge 1}
    \frac{P_\omega^0[Z_n=0]}n<\infty.
$$
\end{prop}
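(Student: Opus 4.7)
The plan is to combine reversibility of $(Z_n)$ with Sinai's localization picture, reducing the convergence of the quenched series to an annealed polylogarithmic bound on the return probability.

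First, I would exploit that $(Z_n)$ is a birth--death chain on $\Z$, hence reversible with respect to the measure $\mu_\omega(x)=\prod_{k=0}^{x-1}\omega_k/(1-\omega_{k+1})$ for $x\ge 0$ (with a symmetric formula for $x<0$). Self-adjointness of the transition operator on $L^2(\mu_\omega)$ forces $n\mapsto P_\omega^0[Z_{2n}=0]$ to be non-increasing, and parity gives $P_\omega^0[Z_{2n+1}=0]=0$. A Cauchy condensation argument then yields the equivalence
$$
    \sum_{n\ge 1}\frac{P_\omega^0[Z_n=0]}{n}<\infty
\quad\Longleftrightarrow\quad
    \sum_{k\ge 0}P_\omega^0[Z_{2^{k+1}}=0]<\infty .
$$
By Tonelli's theorem, to establish the quenched convergence $\p$-a.s.\ it suffices to show $\sum_k\mathbb P[Z_{2^{k+1}}=0]<\infty$, and because of the geometric spacing this reduces the problem to the annealed polylogarithmic decay
$$
    \mathbb P[Z_n=0]=O\bigl((\log n)^{-(1+\delta)}\bigr)
$$
for some $\delta>0$.

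Next, I would obtain this annealed decay from Sinai's localization picture. Let $b_n=b_n(\omega)$ be the bottom of the Sinai valley at scale $\log n$ containing $0$. The splitting
$$
    \mathbb P[Z_n=0]
\le
    \mathbb P\bigl[|Z_n-b_n|>K_n\bigr]+\p\bigl[|b_n|\le K_n\bigr]
$$
separates the two mechanisms that can place $Z_n$ at $0$. The second term is $O(K_n(\log n)^{-2})$ because, by Sinai's theorem and Kesten's formula for the limit law, $b_n/(\log n)^2$ converges in $\p$-distribution to a random variable with a bounded density near $0$. The first term is controlled by a quantitative Sinai--Golosov localization of the walk near its valley bottom, with a rate that must beat $(\log n)^{-1-\delta}$. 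Choosing $K_n$ a slowly growing function of $\log n$ (say, a fixed small power of $\log\log n$) and combining both bounds yields the desired decay.

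The main obstacle is the quantitative localization rate: the classical results of \cite{S82,Golosov84} give only $1-o(1)$, and even the refinement of \cite{Andreoletti_Alternative} produces a rate $O((\log\log\log n/\log\log n)^{1/2})$, which is not summable along $n=2^k$. The approach I would take is to argue directly via the potential: show that on a window of size $O((\log n)^2)$ around the origin the potential has, with $\p$-probability $1-O((\log n)^{-\alpha})$, a dominant valley of depth strictly larger than $\log n$ whose unique bottom lies at distance $\ge K_n$ from $0$ (a random-walk-barrier / arcsine-law estimate), and on such environments use a Golosov-type coupling to the quasi-stationary distribution at $b_n$ to conclude that $P_\omega^0[|Z_n-b_n|>K_n]\le(\log n)^{-\alpha}$.
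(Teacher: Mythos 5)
Your opening reduction is correct and elegant: reversibility of the birth--death chain makes $n\mapsto P_\omega^0[Z_{2n}=0]$ non-increasing (spectral theorem on $L^2(\mu_\omega)$), the odd terms vanish by parity, and Cauchy condensation plus Tonelli reduces the quenched claim to the annealed bound $\mathbb P[Z_n=0]=O((\log n)^{-1-\delta})$. That is a genuinely different organizing principle from the paper, which proves the stronger quenched statement $P_\omega^0[Z_n=0]\le 5\varepsilon_0^{-1}\exp[-(\log n)^{1-\eta}]$ directly for $\p$-a.e.\ $\omega$ and all large $n$ (hence no Fubini step and no annealing is needed). But the second half of your plan, where you have to actually produce the polylogarithmic annealed decay, has a real gap.

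The split $\mathbb P[Z_n=0]\le\mathbb P[|Z_n-b_n|>K_n]+\p[|b_n|\le K_n]$ is valid, and $\p[|b_n|\le K_n]=O(K_n(\log n)^{-2})$ is fine. The problem is the first term. The event that the central $(\log n)$-valley has excess height at most $c$ over $\log n$ has $\p$-probability of order $c/\log n$ (the normalized excess height has a density bounded away from $0$ near the origin; see Lemma \ref{LemmaCardinalHTiPetits}), and on such environments the walk escapes to a neighboring valley before time $n$ with probability bounded away from $0$. Hence $\mathbb P[|Z_n-b_n|>K_n]\gtrsim 1/\log n$, and the best a single-valley argument can give is of order $(\log\log n)/\log n$ after optimizing over the excess-height threshold. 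The paper itself flags precisely this: in the discussion before Lemma \ref{LemmaCardinalHTiPetits}, they write that the analogous $p_n=O((\log n)^{-1-\varepsilon})$ bound ``seems difficult to prove and we are not even sure that it is true.'' Your proposed fix (a good-environment event of probability $1-O((\log n)^{-\alpha})$ on which the valley is ``strictly larger than $\log n$'') cannot close this gap, because making the quenched escape probability $\le (\log n)^{-\alpha}$ forces the excess depth to exceed $\alpha\log\log n$, an event whose complement already has $\p$-probability $\asymp(\log\log n)/\log n\gg(\log n)^{-\alpha}$.

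What the paper does instead is deliberately coarser and sidesteps valley-escape accounting altogether. Fix $\eta\in(0,1)$ and let $z_{\pm}$ be the first sites on each side of $0$ where the potential drops below $-(\log n)^{1-\eta}$ (not $-\log n$). Almost sure estimates on $V$ (Hirsch's theorem and the LIL) show $|z_{\pm}|=O((\log n)^{2-\eta})$ with controlled oscillation of $V$ on $[z_-,z_+]$; then Markov's inequality applied to the escape-time bound \eqref{InegEsperance1} shows $P_\omega^0[\tau(z_+)\wedge\tau(z_-)>n]$ is $\p$-a.s.\ superpolynomially small in $\log n$. And once the walk has reached $z_\pm$, reversibility gives $P_\omega^{z_\pm}[Z_{n-k}=0]\le\mu_\omega(0)/\mu_\omega(z_\pm)\le(1+\varepsilon_0^{-1})\exp[-(\log n)^{1-\eta}]$. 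Combining yields the quenched bound $P_\omega^0[Z_n=0]\le 5\varepsilon_0^{-1}\exp[-(\log n)^{1-\eta}]$ a.s.\ for large $n$, far stronger than any $(\log n)^{-1-\delta}$ and requiring no refined localization near a valley bottom. To make your route work, you would have to replace localization near the single bottom $b_n$ by localization in a low-potential set built from several adjacent valleys (this is exactly what the paper develops in Section \ref{independentenv} for Proposition \ref{Lemmeannexe2}), which is considerably heavier machinery than Proposition \ref{MAJO} actually needs.
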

Before proving this result, we introduce some more notations. First, let
$$
    \tau(x)
:=
    \inf\{n\ge 1\ :\ Z_n=x\},
\qquad
    x\in\Z.
$$
In words, $\tau(x)$ is the hitting time of the site $x$ by the RWRE $Z$.
As usual, we consider the {\it potential} $V$, which is a function of the environment $\omega$ and is defined on $\Z$ as follows:
\begin{equation}\label{eqDefPotentialV}
    V(x)
:=
    \left\{
    \begin{array}{lr}
        \sum_{i=1}^x \log\frac{1-\omega_i}{\omega_i}
    &
        \textnormal{if } x>0,
    \\
        0
    &
        \textnormal{if }x=0,
    \\
        -\sum_{i=x+1}^0 \log\frac{1-\omega_i}{\omega_i}
    &
        \textnormal{if } x<0.
    \end{array}
    \right.
\end{equation}
The potential is useful since it relates to the description of the RWRE as an electric network. It can be used to estimate ruin probabilities for the RWRE. In particular,
we have  (see e.g. \cite[(2.1.4)]{Z01} and \cite[Lemma 2.2]{Devulder_Persistence} coming from \cite[p. 250]{Z01}),
\begin{eqnarray}
\label{probaatteinte}
    P_\omega^b[\tau(c)< \tau(a)]
& = &
    \bigg(\sum_{j=a}^{b-1} e^{V(j)}\bigg)\bigg(\sum_{j=a}^{c-1} e^{V(j)}\bigg)^{-1},
\qquad
    a<b<c
\end{eqnarray}
and, recalling  $\varepsilon_0$ from \eqref{eqHypothesesSinai1} and \eqref{eqHypothesesSinai},
\begin{eqnarray}
\label{InegEsperance1}
    E_\omega^b[\tau(a)\wedge \tau(c)]
& \leq &
    \varepsilon_0^{-1}(c-a)^2
    \exp\Big[\max_{a\leq \ell \leq k \leq c-1; k\ge b}\big(V(k)-V(\ell)\big)\Big],
\qquad
    a<b<c\, ,
\end{eqnarray}
where $E_\omega^b$ denotes the expectation with respect to $P_\omega^b$
and $u\wedge v:=\min(u,v)$, $(u,v)\in\R^2$.
For symmetry reasons, we also have
\begin{eqnarray}
\label{InegEsperance2}
    E_\omega^b[\tau(a)\wedge \tau(c)]
& \leq &
    \varepsilon_0^{-1}(c-a)^2
    \exp\Big[\max_{a\leq \ell \leq k \leq c-1,\ \ell\le b-1}\big(V(\ell)-V(k)\big)\Big],
\quad
    a<b<c\, .
\end{eqnarray}
Moreover, we have, for $k\ge 1$ (see Golosov  \cite{Golosov84}, Lemma 7)
\begin{eqnarray}
\label{InegProba1}
    P_\omega^b[\tau(c)<k]
& \leq & k \exp\left(\min_{\ell \in [b,c-1]}V( \ell)-V(c-1)\right),\qquad     b<c\, ,
\end{eqnarray}
and by symmetry,
we get  (similarly as in Shi and Zindy \cite{ShiZindy}, eq. (2.5) but with some slight differences for the values of $\ell$)
\begin{eqnarray}
\label{InegProba2}
    P_\omega^b[\tau(a)<k]
& \leq & k \exp\left(\min_{\ell \in [a,b-1]}V(\ell) -V(a)\right),\qquad     a<b\, .
\end{eqnarray}

\smallskip

\begin{lem}
Let $\gamma>0$. For $\p$-almost every $\omega$, there exists $N(\omega)$ such that for every $n\ge N(\omega)$,
$$
    n^{\frac 12-\gamma}
\le
    \max_{k\in\{0,\dots, n\}} V(k)
\le
    n^{\frac 12+\gamma},
\qquad
    -n^{\frac 12+\gamma}
\le
    \min_{k\in\{0,\dots, n\}} V(k)
\le
    -n^{\frac 12-\gamma},
$$
and such that the same inequalities hold with
$\{-n,\dots, 0\}$
instead of
$\{0,\dots, n\}$.
\end{lem}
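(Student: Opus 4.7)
The plan is to reduce the claim to classical fluctuation theorems for random walks. On $\{0,1,2,\dots\}$ the process $k\mapsto V(k)$ is the partial sum of the i.i.d.\ random variables $\log\rho_i$, which by \eqref{eqHypothesesSinai} are bounded (since $\omega_i\in[\varepsilon_0,1-\varepsilon_0]$), centred, and have variance $\sigma^2:=\E[(\log\rho_0)^2]>0$. From the definition \eqref{eqDefPotentialV}, $k\mapsto V(-k)$ is again a random walk on $\{0,1,2,\dots\}$ with i.i.d.\ bounded centred increments (of law $-\log\rho_0$), so it suffices to establish the two inequalities on $\{0,\dots,n\}$; the negative-side estimates then follow by the same argument applied to $V(-\,\cdot\,)$.

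\textbf{Upper bound.} By the Hartman--Wintner law of the iterated logarithm applied to $V$ and to $-V$,
$$\limsup_{n\to\infty}\frac{V(n)}{\sqrt{2\sigma^2 n\log\log n}}=1,\quad \liminf_{n\to\infty}\frac{V(n)}{\sqrt{2\sigma^2 n\log\log n}}=-1\quad \p\text{-a.s.}$$
In particular $|V(n)|=o(n^{1/2+\gamma})$ a.s., and since $n\mapsto \max_{k\le n}V(k)$ is non-decreasing and $k\le n\Rightarrow k^{1/2+\gamma}\le n^{1/2+\gamma}$, this immediately gives $\max_{k\le n}V(k)\le n^{1/2+\gamma}$ and $\min_{k\le n}V(k)\ge -n^{1/2+\gamma}$ for every $n\ge N(\omega)$.

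\textbf{Lower bound.} By Donsker's invariance principle, $t\mapsto V(\lfloor nt\rfloor)/\sqrt n$ converges in law on $C[0,1]$ to $\sigma B_t$, hence
$$\p\bigl[\max_{k\le n}V(k)\le\delta\sqrt n\bigr]\underset{n\to\infty}{\longrightarrow}\p\bigl[\max_{t\in[0,1]}\sigma B_t\le\delta\bigr]=2\Phi(\delta/\sigma)-1,$$
which is $O(\delta)$ as $\delta\to 0$. A quantitative version (for instance via a Skorokhod embedding into Brownian motion, which is very clean since $|\log\rho_0|$ is bounded) upgrades this to the non-asymptotic bound
$$\p\bigl[\max_{k\le n}V(k)\le\delta\sqrt n\bigr]\le C_1\delta+C_2 n^{-1/2},\qquad \delta\in(0,1],\ n\ge 1.$$
Choosing $\delta=n^{-\gamma/2}$ along the sparse subsequence $n_j:=2^j$ makes the right-hand side summable in $j$, so the Borel--Cantelli lemma produces, for $\p$-a.e.\ $\omega$, an index $J(\omega)$ with $\max_{k\le n_j}V(k)>n_j^{1/2-\gamma/2}$ for every $j\ge J(\omega)$. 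For any $n$ with $n_j\le n<n_{j+1}=2n_j$, monotonicity of partial maxima then yields
$$\max_{k\le n}V(k)\ge \max_{k\le n_j}V(k)>n_j^{1/2-\gamma/2}\ge (n/2)^{1/2-\gamma/2}\ge n^{1/2-\gamma}$$
for $n$ sufficiently large. Applying the same argument to $-V$ produces $\min_{k\le n}V(k)\le -n^{1/2-\gamma}$.

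The delicate point is the uniform-in-$\delta$ control of $\p[\max_{k\le n}V(k)\le \delta\sqrt n]$: a plain Donsker limit is only pointwise in $\delta$ and will not drive Borel--Cantelli by itself. One really needs a rate, either through a Berry--Esseen type estimate for the functional CLT or, more elementarily, through a Skorokhod embedding, which applies without friction because the steps $\log\rho_0$ are bounded by $\log((1-\varepsilon_0)/\varepsilon_0)$.
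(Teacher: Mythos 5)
Your upper bound, via the Hartman--Wintner law of the iterated logarithm, coincides with the paper's (which cites Chung's account of the LIL). The lower bound is where you genuinely diverge: the paper dispatches it with a single citation to Hirsch's strong law \cite[Theorem 2]{H65}, which gives $\liminf_n \max_{k\le n} S_k/a_n = +\infty$ a.s.\ for a centred square-integrable i.i.d.\ walk whenever $\sum_n a_n n^{-3/2}<\infty$, and hence in particular for $a_n=n^{1/2-\gamma}$. You instead re-derive this consequence from scratch via a quantitative invariance-principle bound $\p[\max_{k\le n}V(k)\le\delta\sqrt n]\le C_1\delta+C_2 n^{-1/2}$, Borel--Cantelli along $n_j=2^j$, and monotonicity of partial maxima to interpolate between dyadic scales. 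This is correct, and you are right that a rate-free Donsker limit cannot drive Borel--Cantelli on its own; identifying where a rate is needed is the real content of the step. Your route is more self-contained --- everything reduces to Skorokhod embedding (or the KMT coupling, which the paper itself invokes in a later section) together with the Brownian reflection principle --- at the cost of leaving the quantitative bound as a sketch. For completeness, the one point worth making explicit in the Skorokhod route is how to pass from the sampled maximum $\max_{k\le n}B_{T_k}$ to a continuous maximum: because the increments of $V$ are bounded by $a_0:=\log\frac{1-\e_0}{\e_0}$, the embedding stopping times can be taken as two-sided exit times from intervals of radius at most $a_0$ around the current position, so $\max_{t\le T_n}B_t\le \max_{k\le n}B_{T_k}+a_0$; combined with concentration of $T_n$ around $\sigma^2 n$ (all moments of the embedding increments are finite since the steps are bounded), this yields the displayed estimate. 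In short, the paper's proof is one citation per inequality; yours re-proves the harder direction with mainstream tools and is longer but more transparent.
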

\begin{proof}
Observe that it is enough to prove that $\p$-almost surely,
\begin{equation}\label{Ineg}
    n^{\frac 12-\gamma}
\leq
    \max_{1 \leq k \leq n} V(k)\le n^{\frac 12+\gamma}
\end{equation}
if $n$ is large enough
(up to a change of $\log \rho_i$ in $-\log \rho_i$, in $\log \rho_{1-i}$ or in
$-\log \rho_{1-i}$).
The first inequality of \eqref{Ineg} is given by \cite[Theorem 2]{H65}.
The second inequality of \eqref{Ineg} is  a consequence of the law of iterated logarithm for $V$,
as explained in (\cite{Chung01}, end of p. 248).
\end{proof}
\begin{proof}[Proof of Proposition \ref{MAJO}]
Let $\eta\in(0,1)$ and $n\geq 2$.
 We define
$$
    z_+
:=
    \inf\{y\ge 1\ :\ V(y)\le -(\log n)^{1-\eta}\},
\qquad
    z_-
:=
    \sup\{y\le -1\ :\ V(y)\le -(\log n)^{1-\eta}
\}.
$$
Due to the previous lemma, choosing $\gamma$ small enough, we have that $\p$-almost surely, if $n$ is large enough, the following inequalities hold:
\begin{equation}\label{EEE0}
    |z_\pm|
\le
    \frac{(\log n)^{2-\eta}}{2}
\mbox{ and }
    \max_{z_-\le i,j\le z_+} (V(i)-V(j))
\le
    (\log n)^{1-\eta/10}.
\end{equation}
We have by the strong Markov property,
\begin{multline}
\label{IneqProbaReturnZero}
    P_\omega^0[Z_n=0]
\le
    P_\omega^0[\tau(z_+)>n,\ \tau(z_-)>n]
    +
    \sum_{k=0}^{n}P_\omega^0[\tau(z_+)=k]P_\omega^{z_+}[Z_{n-k}=0]
\\
    +
    \sum_{k=0}^{n}P_\omega^0[\tau(z_-)=k]P_\omega^{z_-}[Z_{n-k}=0].
\end{multline}
Recall that, given $\omega$, the Markov chain $Z$ is an electrical network
where, for every $x\in\Z$,  the conductance of the bond $(x,x+1)$ is $C_{(x,x+1)}=e^{-V(x)}$
(in the sense of Doyle and Snell \cite{Doyle_Snell}).
In particular, the reversible measure $\mu_\omega$
(unique up to a multiplication by a constant) is given by
\begin{equation}\label{reversiblemeas}
    \mu_\omega(x)
:=
    e^{-V(x)}+e^{-V(x-1)},
\qquad
    z\in\Z.
\end{equation}
 So we have
\begin{eqnarray*}
P_\omega^{z_\pm}[Z_{n-k}=0]&=&P_\omega^0[Z_{n-k}=z_\pm]\frac{\mu_\omega(0)}
     {\mu_\omega(z_\pm)}\le \frac{\mu_\omega(0)}
     {\mu_\omega(z_\pm)}=\frac{e^{-V(0)}+e^{-V(-1)}}{e^{-V(z_\pm)}+e^{-V(z_\pm-1)}}
\\
&\le&
    \frac{e^{-V(0)}+e^{-V(-1)}}{e^{-V(z_\pm)}}\le \big(e^{-V(0)}+e^{-V(-1)}\big)
    \exp\big[-(\log n)^{1-\eta}\big].
\end{eqnarray*}
Hence,
\begin{equation}\label{EEE1}
    \sum_{k=0}^{n}P_\omega^0[\tau(z_\pm)=k]P_\omega^{z_\pm}[Z_{n-k}=0]
\le
    \big(e^{-V(0)}+e^{-V(-1)}\big)
    \exp\big[-(\log n)^{1-\eta}\big].
\end{equation}
Moreover we have due to \eqref{InegEsperance1} and to Markov's inequality,
\begin{eqnarray*}
    P_\omega^0[\tau(z_+)>n,\ \tau(z_-)>n]
& \leq &
    n^{-1}
    E_\omega^0[\tau(z_+)\wedge \tau(z_-)]
\\
&\le&
    n^{-1}
    \varepsilon_0^{-1}
    (z_+-z_-)^2\exp\Big[\max_{z_-\le \ell\le k\le z_+-1}[V(k)-V(\ell)]\Big].
\end{eqnarray*}
Now using \eqref{EEE0}, $\p$-almost surely,
we have
$$
    P_\omega^0[\tau(z_+)>n,\ \tau(z_-)>n]
\le
    \varepsilon_0^{-1} n^{-1} (\log n)^{4-2\eta}\exp\big[(\log n)^{1-\eta/10}\big]
$$
for every $n$ large enough. This combined with \eqref{IneqProbaReturnZero}, \eqref{EEE1} and $e^{-V(-1)}\leq \e_0^{-1}$ gives
$\p$-almost surely for large $n$
\begin{equation*}
    P_\omega^0[Z_n=0]
\le
    5 \varepsilon_0^{-1}\exp\big[-(\log n)^{1-\eta}\big].
\end{equation*}
Consequently,
$
\sum_{n\geq 1}\frac{P_\omega^0[Z_n=0]}{n}<\infty
$
$\p$-almost surely,
which ends the proof of Proposition \ref{MAJO}.
\end{proof}
\section{Direct product of Walks}\label{sec:product}
We start with a proof of Theorem \ref{TheoremProductRW}.
With a slight abuse of notation, we will write $0$ for the origin in $\Z^k$, whatever $k$ is.
\begin{proof}
1. If $m\geq 1$ and $r=0$, then $(Y_n)_n$ is a product of $m$ independent simple random walks on $\Z$.
It is well-known that it is recurrent if $m\in\{1,2\}$, and transient if $m\geq 3$. This follows from elementary calculations and
the crucial fact that for any irreducible Markov chain $(G_n)_n$,
\begin{equation}\label{MCrecurr}
(G_n)_n \text{ is recurrent if and only if }\sum_{n\ge 0} P^x [G_n=0] = \infty,
\end{equation}
where $x$ is one of the states of the Markov chain.\\
2. If $m\geq 3$ and $r\geq 1$,
then the 3-tuple of the three first coordinates  of $(Y_n)_n$ is
$\big(S_n^{(1)},S_n^{(2)},S_n^{(3)}\big)_n$
which is a product of $3$ independent simple random walks on $\mathbb Z$, hence is transient. So  $(Y_n)_n$ is transient for
$\p$-almost every $\omega$.\\
3. If $m= 2$ and $r\geq 1$, then applying the local limit theorem (see e.g. Lawler and Limic \cite{Lawler_Limic} Prop. 2.5.3)
for $S^{(1)}$ and $S^{(2)}$ for $n\in\N^*$,
$$
    P_{\omega}^0 [Y_n=0]
=
    \prod_{i=1}^2 P\big[S_{n}^{(i)}=0\big]
    \prod_{j=1}^r P_{\omega^{(j)}}^0\big[Z_{n}^{(j)}=0\big]
\leq
    \frac{c}{n}P_{\omega^{(1)}}^0\big[Z_{n}^{(1)}=0\big],
$$
where $c>0$ is a constant.
This and Proposition \ref{MAJO} yield
$
   \sum_{n=0}^\infty P_{\omega}^0 [Y_n=0]
<
\infty
$
for $\p$-almost $\omega$. Hence, (using the Borel-Cantelli Lemma or \eqref{MCrecurr}), $(Y_n)_n$ is $\p$-almost surely transient.
\\
4. We now assume $m \in \{0,1\}$.
We choose some $\delta\in(0,1/5)$ such that
$3\delta r<\frac{1-2\delta}2$.
We denote by $\lfloor x\rfloor$ the integer part of $x$ for $x\in\R$.
For $L\in\N$, we have
\begin{eqnarray*}
    \sum_{n\ge 0} P_{\omega}^0 [Y_n=0]
& \geq &
    \sum_{n=\big\lfloor \frac{e^{(1-2\delta)L}}2\big\rfloor+1}^{\lfloor e^{(1-2\delta)L}\rfloor}
    P_{\omega}^0 [Y_{2n}=0]
\\
& = &
    \sum_{n=\big\lfloor \frac{e^{(1-2\delta)L}}2\big\rfloor+1}^{\lfloor e^{(1-2\delta)L}\rfloor}
    P\big[S_{2n}=0\big]^m
    \prod_{j=1}^r P_{\omega^{(j)}}^0\big[Z_{2n}^{(j)}=0\big].
\end{eqnarray*}
Due to \cite{NMFa} (Propositions 3.2, 3.4 and
 (3.22)), since $\delta\in(0,1/5)$, there exist
$C(\delta)>0$ and a sequence $(\Gamma(L,\delta))_{L\in\N}$ of elements of $\mathcal F$
(that is, depending only on $\omega$)
such that
\begin{equation}
\label{eqProbaLimsupGamman1}
    \p\left[\bigcap_{N\ge 0}\bigcup_{L\ge N}\Gamma(L,\delta)\right]
=
    1
\end{equation}
and such that, for every $L\in\N$, on  $\Gamma(L,\delta)$, we have
\begin{equation}\label{InegProbaZeroSinaionGamma1}
    \forall i\in\{1,\dots,r\},
    \, \forall k_i\in\big\{\lfloor e^{3\delta L}\rfloor+1,\cdots,\lfloor e^{(1-2\delta)L}\rfloor\big\},
\ \
    P_{\omega^{(i)}}^0\big[Z_{2k_i}^{(i)}=0\big]
\ge
    C(\delta)\, e^{-3\delta L}.
\end{equation}
Due to the local limit theorem, this gives on $\Gamma(L,\delta)$,
for large $L$ so that $\frac{e^{(1-2\delta)L}}2\geq e^{3\delta L}$,
\begin{eqnarray*}
&\ &
    \sum_{n\ge 0} P_{\omega}^0 [Y_n=0]
\geq
    \frac{e^{(1-2\delta)L}}3
    \left(\frac{c}{e^{(1-2\delta)L/2}}\right)^m
    \bigg(\frac{C(\delta)}{e^{3\delta L}}\bigg)^r
\geq
    c_1(\delta) e^{[(1-2\delta)/2-3\delta r]L},
\end{eqnarray*}
which goes to infinity as $L$ goes to infinity due to our choice of $\delta$, $c_1(\delta)$ being a positive constant.
Thanks to \eqref{eqProbaLimsupGamman1}, this gives
$\sum_{n\ge 0} P_{\omega}^0 [Y_n=0]=+\infty$ for $\p$-almost all $\omega$.
Consequently, due to \eqref{MCrecurr}, $(Y_n)_n$ is recurrent for $\p$-almost every environment $\omega$.
\end{proof}

\begin{remark}
Recall that Sinai \cite{S82} (see also Golosov \cite{Golosov84}) proved the convergence in distribution
of $\big(Z_n^{(i)}/(\log n)^2\big)_n$. Recall also that, due to de Moivre's theorem, $\big(S_n^{(i)}/\sqrt{n}\big)_n$
converges in distribution. Due to Theorem \ref{TheoremProductRW},
$Y$ is recurrent iff $\sum_n 1/(n^{\frac m2}((\log n)^2)^{r})=\infty$,
where $n^{\frac m2}((\log n)^2)^{r}$ is the product of the normalizations
of the coordinates of $Y$ under the (non Markovian) annealed law $\PP$.
\end{remark}


Note also that Theorem \ref{simultaneousmeeting} and the previous paragraph lead to the following statement
(only for $r\geq 1$):
if $\sum_{n\geq 1} \frac{1}{n^{m/2}(\log n)^{2I-2}}<\infty$,
then almost surely, $S_n^{(1)},\dots ,S_n^{(m)},Z_n^{(1)}, \dots Z_n^{(r)}$
meet simultaneously only a finite number of times;
otherwise, they almost surely meet simultaneously infinitely often.

Now we will start to prove Theorem \ref{simultaneousmeeting}.
Note that the case $m\leq 1$ is already treated in Theorem \ref{TheoremProductRW}
which says that in this case the random walks meet infinitely often at $0$.

\begin{proof}[Proof of Theorem \ref{simultaneousmeeting}]
Let $A_n:=\big\{S_n^{(1)}=...=S_n^{(m)}=Z_n^{(1)}=...=Z_n^{(r)}\big\}$ for $n\geq 0$.\smallskip\\
{\it Proof of (i).\/} Assume $m= 3$ and $r= 1$. Observe that for large $n$,
\begin{eqnarray*}
    \mathbb P^0_\omega[A_n]
& = &
    \sum_{k\in\mathbb Z} \mathbb P^0_\omega\big[Z_n^{(1)}=k\big]\big(\mathbb P\big[S_n^{(1)}=k\big]\big)^3
\\
& \leq &
     C\sum_{k\in\mathbb Z} \frac{\mathbb P^0_\omega\big[Z_n^{(1)}=k\big]}{n\sqrt{n}}
=
    \frac{C}{n\sqrt{n}}
\end{eqnarray*}
for some $C>0$ since
for every $k\in\Z$ and $n\in\N$,
$
    \mathbb P\big[S_{2n}^{(1)}=k\big]
\leq
    \mathbb P\big[S_{2n}^{(1)}=0\big]
\sim_{n\to+\infty}
    (\pi n)^{-1/2}
$
due to the local limit theorem.
Hence
$
    \sum_n  \mathbb P^0_\omega\big[S_n^{(1)}=S_n^{(2)}=S_n^{(3)}=Z_n^{(1)}\big]
<
    \infty
$ and (i) follows by the Borel-Cantelli lemma in this case and a fortiori when $m\geq 3$ and $r\geq 1$.
\smallskip\\
{\it Proof of (ii).\/}
Assume $m=2$ and $r\geq I=1$.
Since $I=1$, all the RWRE are in the same environment, which is necessary to apply
Proposition \ref{Lemmeannexe1}, which is essential to prove (ii).
We use the generalization of the second Borel Cantelli lemma due to Kochen and Stone \cite{KochenStone}
combined with a result by Doob. To simplify notations, we also write $\omega$ for $\omega^{(1)}$,
so $\omega^{(i)}=\omega$ for every $1\leq i \leq r$.

We first prove that $\sum_n\mathbb P_\omega[A_n]=\infty$ a.s. More precisely,
we fix an initial condition $x=(x_1,x_2,y_1,...,y_r)\in(2\mathbb Z)^{2+r}\cup(2\mathbb Z+1)^{2+r}$.
We have for all $n$ and $\omega$,
\begin{eqnarray*}
    P^{x}_\omega[A_{n}]
& = &
    \sum_{k\in\mathbb Z}
        \mathbb P\big[x_1+S_n^{(1)}=k\big]\mathbb P\big[x_2+S_n^{(2)}=k\big]
        \prod_{j=1}^r P_{\omega}^{y_j}\big[Z_n^{(j)}=k\big].
\end{eqnarray*}
Notice that, for every $i\in\{1,2\}$, due to the de Moivre-Laplace theorem (see e.g.
\cite[Prop. 2.5.3 and Corollary 2.5.4]{Lawler_Limic},
$$
   \sup_{k \in (x_i+n+2\mathbb Z),\ |k|\le(\log n )^3} \left|\mathbb P\big[x_i+S_n^{(i)}=k\big]
    -\frac{\sqrt{2}}{\sqrt{\pi n}} e^{-(k-x_i)^2/(2n)}
   \right|=o(n^{-1/2}).
$$
Consequently for large even $n$, for every $\omega$,
$$
    P^{x}_\omega[A_{n}]
\geq
    \sum_{|k|\le (\log n) ^3, k-(x_1+n)\in(2\Z)}
    \frac{1}{\pi n}
    \prod_{j=1}^r P_{\omega}^{y_j}\big[Z_n^{(j)}=k\big]
=
    \frac{1}{\pi n}
    \sum_{|k|\le (\log n) ^3}
    \prod_{j=1}^r P_{\omega}^{y_j}\big[Z_n^{(j)}=k\big].
$$
This remains true for large odd $n$.
Hence for large $n$,
\begin{equation}\label{InegPAn}
    P^{x}_\omega[A_{n}]
\geq
    \frac{1}{\pi n} P_\omega^{(y_1,...,y_r)}\big[Z_n^{(1)}=...=Z_n^{(r)}\big]
    -
    \frac{1}{\pi n} P_\omega^{y_1}\big[\big|Z_n^{(1)}\big|>(\log n)^3\big].
\end{equation}
Recall that $\big(Z_n^{(1)}/(\log n)^3\big)_n$ converges almost surely to 0
with respect to the annealed law (see \cite{DeheuvelsRevesez} Theorem 4, or more recently \cite{HuShi} Theorem 3).
This holds also true for $P_\omega^{y_1}$ for $\p$-almost every $\omega$, so the last probability in \eqref{InegPAn}
goes to $0$ as $n\to+\infty$,
which yields
$
    \lim_{N\to+\infty}\frac{1}{\log N}\sum_{n=1}^N \frac{1}{n} P_\omega^{y_1}\big[\big|Z_n^{(1)}\big|>(\log n)^3\big]
=
    0
$.
Hence for $\p$-almost every $\omega$,
\begin{equation}\label{InegLimsupSumPAn}
    \limsup_{N\rightarrow +\infty}
    \frac 1{\log N}\sum_{n=1}^ N{P_\omega^x[A_n]}
\ge
    \frac {c(\omega)}{\pi},
\end{equation}
with
$
    c(\omega)
:=
    \inf_{(y_1,...,y_r)\in[(2\Z)^r\cup (2\Z+1)^r]}\limsup\limits_{N\rightarrow+\infty}\frac 1{\log N}
    \sum_{n=1}^N \frac{1}{n} P_\omega^{(y_1,...,y_r)}\big[Z_n^{(1)}=...=Z_n^{(r)}\big]
$.
If $r=1$, then $c(\omega)=1$. If $r>1$, due to Proposition \ref{Lemmeannexe1},
$c(\omega)>0$ for $\p$-almost every environment $\omega$. This implies that
\begin{equation}\label{sumPAn}
\sum_{n\ge 1}P_\omega^x[A_n]=+\infty.
\end{equation}
Moreover, let $C>0$ be such that for all $n\geq 1$ and $k\in\Z$,
$
    \mathbb{P}\big[S_n^{(1)}=k\big]
\leq
    C n^{-1/2}
$, which exists e.g. since
$
    \mathbb{P}\big[S_{2n}^{(1)}=k\big]
\leq
    \mathbb{P}\big[S_{2n}^{(1)}=0\big]
\sim_{n\to+\infty}
    (\pi n)^{-1/2}
$
by the local limit theorem.
So for $1\leq n<m$, we have by Markov property,
\begin{eqnarray*}
&&
    P^{x}_\omega[A_{n}\cap A_{m}]
\\
& = &
    \sum_{(k,\ell)\in\mathbb Z^2}
    P_\omega^{(y_1,...,y_r)}\big[Z_n^{(1)}=...=Z_n^{(r)}=k,\ Z_m^{(1)}=...=Z_m^{(r)}=\ell\big]
\\
& \  &
    \qquad \qquad \quad \ \times
    \mathbb P\big[x_1+S_n^{(1)}=k\big]
    \mathbb P\big[x_2+S_n^{(2)}=k\big]
    \big(\mathbb P\big[S_{m-n}^{(1)}=\ell-k\big]\big)^2
\\
& \le &
    \sum_{k\in\Z}
    P_\omega^{(y_1,...,y_r)}\big[Z_n^{(1)}=...=Z_n^{(r)}=k\big]
    P_\omega^{(k,...,k)} \big[Z_{m-n}^{(1)}=...=Z_{m-n}^{(r)}\big]
    \frac{C^4}{n(m-n)}
\\
& \le &
    \frac{C^4}{n(m-n)}.
\end{eqnarray*}
Consequently, for large $N$,
$$
    \sum_{1\le n,m\le N, m\neq n}P_\omega^{x}[A_{n}\cap A_{m}]
\leq
    2\sum_{n=1}^N\frac{C^4}{n}\sum_{\ell=1}^{N-n}\frac{1}{\ell}
\leq
    3C^4(\log N)^2.
$$
Applying this and \eqref{InegLimsupSumPAn} we get for $\p$-almost every $\omega$, for every initial condition $x\in(2\Z)^{2+r}\cup (2\Z+1)^{2+r}$,
\begin{equation}\label{sumPAnAm}
\limsup_{N\rightarrow+\infty}
    \frac{\left(\sum_{n=1}^N P_\omega^{x}[A_n]\right)^2}{\sum_{1\le n,m\le N}P_\omega^{x}[A_{n}\cap A_{m}]}
\geq
    \frac{(c(\omega))^2}{3\pi^2C^4}.
\end{equation}
Due to the Kochen and Stone extension of the second Borel-Cantelli lemma (see Item (iii) of the main theorem of \cite{KochenStone} applied with $X_n=\sum_{i=1}^n\mathbf 1_{A_i}$, or \cite[p. 317]{Spitzer}),
\eqref{sumPAnAm} and \eqref{sumPAn} imply that
$
    P_\omega^x[A_n\ i.o.]
=
    P_\omega^x\big[\cap_{N\geq 0}\cup_{n\geq N}A_n\big]
\ge
    (c(\omega))^2/(3\pi^2C^4)>0
$,
where i.o. means infinitely often.
Now for $\p$-almost every $\omega$, due to a result by Doob (see for example Proposition V-2.4 in \cite{Neveu}),
since $E:=\{A_n\ i.o.\}=\cap_{N\geq 0}\cup_{n\geq N}A_n$ is invariant
(with respect to the shifts of the sequence $(Y_0, Y_1, Y_2, \ldots $)),
for every $x\in(2\mathbb Z)^{2+r}\cup(2\mathbb Z+1)^{2+r}$,
$\big(P_\omega^{(S^{(1)}_n,S^{(2)}_n,Z^{(1)}_n,...,Z^{(r)}_n)}[E]\big)_n$
converges $P_\omega^x$-almost surely to $\mathbf 1_E$.
But $\inf_{x\in(2\mathbb Z)^{2+r}\cup(2\mathbb Z+1)^{2+r}}  P_\omega^x[E]\geq (c(\omega))^2/(3\pi^2C^4)>0$,
so we conclude that $\mathbf 1_E=1$ $P_\omega^x$-almost surely,
thus $P_\omega^x(E)=1$, for $\p$-almost every environment $\omega$.
\medskip\\
{\it Proof of (iii).\/}  Assume $m=2$ and $r=I=2$.
We have
\begin{eqnarray*}
    \mathbb P^0[A_n]
&=&
    \sum_{k\in\mathbb Z} \mathbb P\big[Z_n^{(1)}=Z_n^{(2)}=k\big]\big(\mathbb P\big[S_n^{(1)}=k\big]\big)^2
\\
&\le &
    \frac{C^2}{n}\mathbb P\left[Z_n^{(1)}=Z_n^{(2)}\right]
=
    O\big(n^{-1}(\log n)^{-3/2}\big),
\end{eqnarray*}
due to Proposition \ref{Lemmeannexe2} and the local limit theorem.
Hence $\sum_n\mathbb P^0[A_n]<\infty$ and (iii) follows due to the Borel-Cantelli lemma.
\end{proof}

So there only remains to prove Propositions \ref{Lemmeannexe2} and \ref{Lemmeannexe1}.

\section{Probability of meeting for two independent recurrent rwre in independent environments}\label{independentenv}
The aim of this section is to prove Proposition \ref{Lemmeannexe2}, which is a key result in the proof of case (iii) of Theorem \ref{simultaneousmeeting}.

Let $Z^{(1)}$ and $Z^{(2)}$ be two independent recurrent RWRE in independent
environments $\omega^{(1)}$ and $\omega^{(2)}$ satisfying \eqref{eqHypothesesSinai}.

The main idea of the proof is that $Z_n^{(1)}$ and $Z_n^{(2)}$
are localized with high (annealed) probability in two
areas (depending on the environments, see Proposition \ref{LemmaLocalisationVerticale})
which have no common point with high probability (see Lemma \ref{LemmaIntersectionBottoms2IndependentPotentials}).
Due to \cite{S82}, we know that, with high probability, $Z_n^{(i)}$
is close to the bottom $B_n^{(i)}$ of some valley (containing 0 and of height larger than $\log n$) for the potential $V^{(i)}$.
Here and in the following, $V^{(i)}$ denotes the potential corresponding to $\omega^{(i)}$,
defined as in \eqref{eqDefPotentialV} with $\omega$ replaced by $\omega^{(i)}$.
An intuitive idea to prove Proposition
\ref{Lemmeannexe2} should then be that
$
    p_n
:=
    \mathbb P\left[\max_{i=1,2}\big|Z_n^{(i)}-B_n^{(i)}\big|\geq \big|B_n^{(1)}-B_n^{(2)}\big|/2\right]
$
is very small. More precisely we would like to prove that $p_n = O\big((\log n)^{-1-\varepsilon}\big)$.
(In view of the proof of (iii) above, it would suffice to show that
$\sum_n \frac{p_n}{n}<\infty$).
However, this seems difficult to prove and we are not even sure that it is true.
Indeed, in view of Lemma \ref{LemmaCardinalHTiPetits} below (proved for a continuous approximation $W^{(i)}\approx V^{(i)}$),
we think that with probability greater than $ 1/\log n$, $0$ belongs to a valley of height between $\log n - 2\log \log n$ and $\log n$ and that
the annealed probability that $Z_n^{(i)}$ is close to the bottom of this valley (which is not $B_n^{(i)}$)
should be greater that $1/\log n$. Hence, to prove Proposition \ref{Lemmeannexe2}, we will work with several valleys instead of a single one.
\subsection{Proof of Proposition \ref{Lemmeannexe2}}
In this subsection, we use a Brownian motion $W^{(i)}$, approximating the potential $V^{(i)}$, to  build a localization domain $\Xi_n\big(W^{(i)}\big)$
for $Z_n^{(i)}$, $i\in\{1,2\}$. This localization is stated in Proposition \ref{LemmaLocalisationVerticale} and is crucial to prove Proposition \ref{Lemmeannexe2}.

In order to construct our localization domain $\Xi_n\big(W^{(i)}\big)$, we use the notion of $h$-extrema, defined as follows.

\begin{defi}[\cite{NP}]
If $w:\R\rightarrow \R$ is a continuous function
 and $h>0$, we say that $y_0\in\mathbb R$ is an {\it $h$-minimum} for $w$
if there exist real numbers $a$ and $c$ such that
$a<y_0<c$,
$w(y_0)=\inf_{[a, c]} w$,
$w(a)\geq w(y_0)+h$
and
$w(c)\geq w(y_0)+h$.
We say that $y_0$ is an {\it $h$-maximum} for $w$ if $y_0$ is an $h$-minimum for $-w$.
In any of these two cases, we say that $y_0$ is an {\it $h$-extremum} for $w$.
\end{defi}

We also use the following notation.

\begin{defi}\label{calW}
As in \cite{Cheliotis}, we denote by $\mathcal{W}$ the set of functions $w$ : $\R\rightarrow \R$ such that
the three following conditions are satisfied:
{\bf (a)} $w$ is continuous on $\R$;
{\bf (b)} for every $h>0$, the set of $h$-extrema of $w$ can be written
$\{x_k(w,h),\ k\in\Z\}$, with $(x_k(w,h))_{k\in\Z}$ strictly increasing, unbounded from below and above,
and with $x_0(w,h)\leq 0 < x_1(w,h)$, notation that we use in the rest of the paper on $\mathcal{W}$;
{\bf (c)} for all $k\in\Z$ and $h>0$, $x_k(w,h)$ is an $h$-minimum for $w$ if and only if $x_{k+1}(w,h)$ is an $h$-maximum for $w$.
\end{defi}
We now introduce, for $w\in\mathcal W$, $i\in\Z$ and $h>0$,
$$
    b_{i}(w,h)
:=
    \left\{
    \begin{array}{ll}
      x_{2i}(w,h)  & \text{if } x_0(w,h) \text{ is an  $h$-minimum},\\
      x_{2i+1}(w,h) &  \text{otherwise}.
    \end{array}
    \right.
$$
As a consequence, the $b_{i}(w,h)$ are the $h$-minima of $w$.
We denote by $M_i(w,h)$ the unique $h$-maximum of $w$ between  $b_{i}(w,h)$ and $b_{i+1}(w,h)$.
That is, $M_i(w,h)=x_{j+1}(w,h)$ if $b_{i}(w,h)=x_j(w,h)$.

For $w\in\mathcal W$, $h>0$ and $i\in\Z$, the restriction of $w-w(x_i(w,h))$ to $[x_i(w,h),x_{i+1}(w,h)]$
is denoted by $T_i(w,h)$ and is called an {\it $h$-slope}, as in \cite{Cheliotis}.
If $x_i(w,h)$ is an $h$-minimum (resp. $h$-maximum),
then $T_i(w,h)$  is a nonnegative (resp. nonpositive) function, and its maximum (resp. minimum) is attained at $x_{i+1}(w,h)$.
We also introduce, for each slope $T_i(w,h)$, its {\it height}
$H(T_i(w,h)):=|w(x_{i+1}(w,h))-w(x_i(w,h))|\geq h$,
and its {\it excess height} $e(T_i(w,h)):=H(T_i(w,h))-h\geq 0$.

When $x_i(w,h)$ is an $h$-minimum, the restriction of
$w$ to $[x_{i-1}(w,h),x_{i+1}(w,h)]$ will sometimes be called
{\it valley} of height at least $h$ and of bottom $x_i(w,h)$. The height
of this valley is defined as $\min\{w(x_{i-1}(w,h)),w(x_{i+1}(w,h))\}-w(x_i(w,h))$, which can also be rewritten $\min\{H(T_{i-1}(w,h)),H(T_{i}(w,h))\}$.

These $h$-extrema are  useful to localize RWRE and diffusions in a random potential.
Indeed, a diffusion in a two-sided Brownian potential $W$ (resp. in a ($-\kappa/2$)-drifted Brownian potential $W_\kappa$ with $0<\kappa<1$)
is localized at large time $t$ with high probability in a small neighborhood of $b_{0}(W,\log t)$
(resp. some of the $b_{i}(W_\kappa,\log t-\sqrt{\log t})$, $i\geq 0$)
see e.g. \cite{Cheliotis} and \cite{Cheliotis_Favorite} (resp.  \cite{AndreolettiDevulder}). For some applications
to recurrent RWRE, see e.g. \cite{Bovier_Faggionato} and \cite{Devulder_Persistence}.

Let $C_1>2$ and $\alpha>2$.
Define $\log^{(2)} x=\log \log x$ for $x>1$.
As in \cite{Devulder_Persistence}, we use the
Koml\'os-Major-Tusn\'ady almost sure invariance principle \cite{KMT},
which ensures that:
\begin{lem}\label{lem:KMT}
Up to an enlargement
of $(\Omega,\mathcal F,\p)$, there exist
two independent two-sided Brownian motions $\big(W^{(i)}(s), s\in\R\big)$ ($i\in\{1,2\}$)
with $\E\big[(W^{(i)}(1))^2\big]=\E[(V^{(i)}(1))^2]=\sigma_i^2$ and a real number $\tilde C_1>0$ such that for all $n$ large enough,
$$
    \p\left[\sup_{|t|\le(\log n)^\alpha}\Big|V^{(i)}(\lfloor t\rfloor)-W^{(i)}(t)\Big|>\tilde C_1\log^{(2)} n\right]
\le
    (\log n)^{-C_1},
\qquad
    i\in\{1,2\}.
$$
\end{lem}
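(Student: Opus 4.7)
The plan is to invoke the Komlós--Major--Tusnády strong approximation theorem \cite{KMT} on a window of length $(\log n)^{\alpha}$, and then close a small gap between integer and real times with a Gaussian oscillation estimate.

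By \eqref{eqHypothesesSinai1}, the variables $(\log \rho_k^{(i)})_{k \in \Z}$ are i.i.d., centered, of variance $\sigma_i^2 > 0$, and uniformly bounded (since $\omega_k^{(i)} \in [\varepsilon_0, 1-\varepsilon_0]$), so they have a finite moment generating function on a neighborhood of $0$. I would then apply KMT to each of the two one-sided partial sum processes $(V^{(i)}(k))_{k \ge 0}$ and $(V^{(i)}(-k))_{k \ge 0}$ on a suitable enlargement of $(\Omega, \mathcal{F}, \p)$, and glue the two resulting one-sided standard Brownian motions into a two-sided Brownian motion $W^{(i)}$ with $\E[W^{(i)}(1)^2] = \sigma_i^2$. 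The KMT theorem furnishes constants $c, K, \lambda > 0$ (depending only on the law of $\log \rho_0^{(i)}$) such that
$$
\p\left[\max_{|k| \le N}\, \bigl|V^{(i)}(k) - W^{(i)}(k)\bigr| > c \log N + x\right] \le K e^{-\lambda x}, \qquad N \ge 1,\ x \ge 0.
$$
Since $\omega^{(1)}$ and $\omega^{(2)}$ are independent, the two constructions can be performed independently, which delivers independent Brownian motions $W^{(1)}, W^{(2)}$.

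Next I specialize to $N := \lfloor (\log n)^{\alpha} \rfloor + 1$, so that $\log N = \alpha \log^{(2)} n + O(1)$, and take $x := (C_1 + 2)\lambda^{-1} \log^{(2)} n$. The deterministic bound then becomes $c \log N + x \le C_2 \log^{(2)} n$ for some $C_2 = C_2(\alpha, C_1, \lambda)$ and $n$ large, while $K e^{-\lambda x} \le \tfrac{1}{2}(\log n)^{-C_1}$. It remains to handle non-integer times: by the reflection-principle bound
$$
\p\left[\sup_{s \in [0, 1]} \bigl|W^{(i)}(s)\bigr| > y\right] \le 4 \exp\!\left(-\frac{y^2}{2 \sigma_i^2}\right),
$$
a union bound over the at most $2N$ unit subintervals of $[-N, N]$, with $y := C_3 \sqrt{\log^{(2)} n}$ and $C_3$ chosen so that $C_3^2 > 2 \sigma_i^2 (\alpha + C_1 + 1)$, yields
$$
\sup_{|t| \le (\log n)^{\alpha}} \bigl|W^{(i)}(t) - W^{(i)}(\lfloor t \rfloor)\bigr| \le C_3 \sqrt{\log^{(2)} n}
$$
off a set of $\p$-probability $\le \tfrac{1}{2} (\log n)^{-C_1}$. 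For $n$ large enough, $C_3 \sqrt{\log^{(2)} n} \le \log^{(2)} n$.

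A final union bound on the two bad events gives total exceptional probability $\le (\log n)^{-C_1}$, and setting $\tilde{C}_1 := C_2 + 1$ absorbs the sub-unit oscillation into $\log^{(2)} n$ for large $n$, yielding the claim for each $i \in \{1, 2\}$. The only mildly delicate point is the rate matching: the window $(\log n)^{\alpha}$ is chosen precisely so that KMT's intrinsic $\log N$ error becomes $O(\log^{(2)} n)$, while the Brownian oscillation on unit scales contributes only the strictly lower-order term $O(\sqrt{\log^{(2)} n})$. There is no substantive obstacle, and the argument is essentially the one used in \cite{Devulder_Persistence}.
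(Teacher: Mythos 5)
Your proposal is correct and follows essentially the same route as the paper: apply KMT (with $N\approx(\log n)^\alpha$ so the intrinsic $\log N$ error is $O(\log^{(2)} n)$ and the exponential tail gives $(\log n)^{-C_1}$), then control the gap between integer and real times via a union bound over unit intervals together with a Gaussian tail/reflection estimate. The only cosmetic difference is that you take the oscillation threshold $y=C_3\sqrt{\log^{(2)} n}$, yielding a polynomial-in-$\log n$ tail of any prescribed order, whereas the paper uses $y=\tfrac{\tilde C_1}{2}\log^{(2)} n$ and gets a super-polynomial tail; both are more than enough for the target bound $\tfrac12(\log n)^{-C_1}$, and both absorb the oscillation term into $\tilde C_1\log^{(2)} n$ for large $n$.
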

\begin{proof}
Notice that $V^{(1)}$ and $V^{(2)}$ are independent, since $\omega^{(1)}$ and $\omega^{(2)}$ are independent.
Due to (\cite{KMT}, Thm. 1),
there exist positive constants $a$, $b$ and $c$
such that for $N\in\mathbb N$,
up to an enlargement of $(\Omega,\mathcal F,\p)$,
there exist two independent two-sided Brownian motions $(W^{(i)}(s), s\in \R)$ ($i\in\{1,2\}$) on $(\Omega,\mathcal F,\p)$
with $\E[(W^{(i)}(1))^2]=\E[(V^{(i)}(1))^2]=\sigma_i^2$ such that
\begin{equation}\label{KMT}
    \forall  x\in\mathbb R,\ \forall i\in\{1,2\},
\qquad
    \p\left[\sup_{|k|\le N}\Big|V^{(i)}(k)-W^{(i)}(k)\Big|>a\log N+x\right]
\le
    b e^{-cx}.
\end{equation}
Applying this result to $N:=\lfloor (\log n)^\alpha\rfloor+1$ and $x:=(\log (2b)+C_1\log^{(2)} n)/c$
and taking $\tilde C_1>2\left(a\alpha+\frac {C_1}c\right)$, we obtain that
\begin{equation}\label{eqKMT1}
    \p\left[\sup_{|k|\le\lfloor (\log n)^\alpha\rfloor+1}\Big|V^{(i)}(k)-W^{(i)}(k)\Big|
                >\frac{\tilde C_1}2\log^{(2)} n
      \right]
\le
    \frac 12(\log n)^{-C_1},
\end{equation}
for all $n$ large enough.
Moreover, for every $n$ large enough,
\begin{multline*}
    \ \p\left[\sup_{|t|\le(\log n)^\alpha}\Big|W^{(i)}(\lfloor t\rfloor)-W^{(i)}(t)\Big|>\frac{\tilde C_1}2\log^{(2)}  n\right]
\\
\le
    3(\log n)^\alpha\p\left[\sup_{0\le t<1}\big|W^{(i)}(t)\big|>\frac{\tilde C_1}2\log^{(2)} n\right]
\le
    6(\log n)^\alpha\p\left[\big|W^{(i)}(1)\big|>\frac{\tilde C_1}2\log^{(2)} n\right]
\\
\le
    6(\log n)^\alpha\frac{2}{\sqrt{2\pi}}e^{-\frac{(\tilde C_1)^2}{8\sigma_i^2}(\log^{(2)} n)^2}
=
    \frac{12}{\sqrt{2\pi}}(\log n)^{\alpha -\frac{(\tilde C_1)^2}{8\sigma_i^2}\log^{(2)} n}
\le
    \frac 12(\log n)^{-C_1}\, ,
\end{multline*}
since $\sup_{[0,1]} W^{(i)}=_{law} \big|W^{(i)}(1)\big|$.
This combined with \eqref{eqKMT1} proves the lemma.
\end{proof}
In the rest of the paper, we use the $W^{(i)}$ introduced in Lemma 4.2.
We will use the valleys for the $W^{(i)}$.
Fix some $C_2\geq 2\alpha+2+10\,\tilde C_1$. Let
\begin{equation}\label{hndef}
h_n:= \log n -5C_2\log^{(2)} n\, .
\end{equation}
We know from (\cite{Cheliotis}, Lemma 8) that $W^{(i)}\in\mathcal{W}$ almost surely (recall definition \ref{calW}).
Moreover, using \cite[Th 2.1]{HuShi} with $0<a=b$, we have
$
    \p\big[\sup_{0\leq s \leq t}[W^{(i)}(s)-\underline{W}^{(i)}(s)]<b\big]
\leq
    (4/\pi)\exp[-\pi^2\sigma_i^2 t/(8b^2)]
$,
where $\underline{W}^{(i)}(s):=\inf_{[0,s]} W^{(i)}$.
Applying this several times to $W^{(i)}$ and $-W^{(i)}$ with $t=(\log n)^\alpha/10$ and $b=h_n$,
the following holds
with a  probability $1-o\big((\log n)^{-2}\big)$ (since $\alpha>2$),
\begin{equation}\label{InegHypothesesMiPetits}
    \forall i\in\{1,2\},\quad    -(\log n)^{\alpha}
\leq
    b_{-4}\big(W^{(i)}, h_n\big)
\leq
    M_{3}\big(W^{(i)}, h_n\big)
\leq
    (\log n)^{\alpha}.
\end{equation}


The following lemma shows that Proposition \ref{Lemmeannexe2} is more subtle than it may seem at first sight.
\begin{lem}\label{LemmaCardinalHTiPetits}
Let $W$ be a two-sided standard Brownian motion and $\sigma >0$. Then, for every $n$ large enough,
\begin{equation}
\label{InegProbaHT0}
    \p\big[H(T_0(\sigma W,h_n))\leq \log n\big]
\geq
    C_2(\log^{(2)} n)(\log n)^{-1},
\end{equation}
\begin{eqnarray}
\label{InegProbaHCardTi}
&&
    \p\big[
        \sharp\{j\in\{-5,...,5\},\ H(T_j(\sigma W,h_n-2C_2\log^{(2)} n))\leq \log n + C_2\log^{(2)} n\}\geq 2
    \big]
\\
& = &
    O\big((\log^{(2)} n)^2(\log n)^{-2}\big),
\nonumber
\\[2mm]
\label{InegProbaHCardTi2}
&&
    \p\big[\exists j\in\{-5,...,5\},\ H(T_j(\sigma W,h_n-2C_2\log^{(2)} n))\le \log n + C_2\log^{(2)} n
    \big]
\\
& = &
    O\big((\log^{(2)} n)(\log n)^{-1}\big).
\nonumber
\end{eqnarray}
\end{lem}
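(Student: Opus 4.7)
My plan is to reduce all three estimates to standard distributional facts about the $h$-slope heights of a two-sided Brownian motion. By Brownian scaling $\sigma W(t) \stackrel{d}{=} W(\sigma^2 t)$, the law of $H(T_i(\sigma W,h))$ does not depend on $\sigma$, and a further scaling argument shows that $H(T_i(\sigma W,h))/h$ has a law independent of $h$ as well. The key distributional input, going back to Neveu and Pitman \cite{NP} (see also \cite{Cheliotis}), is twofold: first, for $j\ne 0$, the excess heights $e(T_j(\sigma W,h)):=H(T_j)-h$ form an i.i.d.\ sequence of exponential random variables with mean $h$, so in particular the density of $H(T_j)$ at $h^+$ equals $1/h$; second, the slope $T_0$ containing the origin carries the corresponding size-biased law (biased by the slope's temporal length, which by Brownian scaling grows like $H^2$), whose density at $h^+$ works out to $1/(5h)$ after normalization.

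With these distributional inputs in hand, the three estimates reduce to short computations. For \eqref{InegProbaHT0}, the excess threshold is $\log n - h_n = 5C_2\log^{(2)} n$; applying the density at $h_n^+$ gives, for large $n$,
$\p[H(T_0(\sigma W,h_n))\le \log n] = (1+o(1))\cdot 5C_2\log^{(2)} n/(5h_n) \ge C_2\log^{(2)} n/\log n$.
The coefficient $5$ in the definition of $h_n$ is calibrated precisely to cancel the $1/5$ from the size-biasing normalization. For \eqref{InegProbaHCardTi2}, each of the $11$ slopes $T_j$ at scale $h_n-2C_2\log^{(2)} n$ has probability $O(\log^{(2)} n/\log n)$ of having height below $\log n + C_2\log^{(2)} n$ (excess threshold $8C_2\log^{(2)} n$ divided by a scale $\asymp\log n$), and a union bound yields the claim. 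For \eqref{InegProbaHCardTi}, I sum over $\binom{11}{2}$ pairs of distinct indices: for pairs with both indices $\ne 0$, the exponential excess heights are independent, giving a product bound of order $(\log^{(2)} n/\log n)^2$; for pairs involving $T_0$, conditional independence via the strong Markov property applied at the $h$-extremum separating $T_0$ from $T_j$ yields the same bound.

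The main technical obstacle is the careful derivation of the size-biased law of $H(T_0)$: since size-biasing is with respect to the slope's temporal length (not its height), and the conditional distribution of length given height $H$ is determined by the Brownian scaling $L\asymp H^2$, verifying that the density at $h^+$ equals exactly $1/(5h)$ requires pinning down the normalization constant. The precise matching of this $1/5$ with the coefficient $5$ in the definition $h_n=\log n - 5C_2\log^{(2)} n$ is what ultimately allows the lower bound in \eqref{InegProbaHT0} to be exactly $C_2\log^{(2)} n/\log n$ and not a smaller multiple.
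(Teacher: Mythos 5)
Your overall strategy matches the paper's: reduce all three bounds to known densities of the normalized excess heights $e(T_j)/h$, using the exp(1) law for $j\ne 0$ from Neveu--Pitman and a different law for the slope $T_0$ containing the origin, together with independence of the slopes. However, your claimed density for $T_0$ is wrong. You size-bias the exponential excess law by $H^2$ (arguing that the temporal length of the slope scales like its squared height), which yields a normalized-excess density of $(1+x)^2 e^{-x}/5$ and hence the value $1/5$ at $0^+$. The correct law, which the paper simply cites from Cheliotis's formula (11), has density $(2x+1)e^{-x}/3$, with value $1/3$ at $0^+$. The source of your error is the assumption $\E[L\mid H]\propto H^2$: Brownian scaling relates a level-$h$ slope to a level-$1$ slope, but after that scaling the conditional length given height depends on the \emph{ratio} $H/h$, not just on $H$; working backwards from the correct density one finds $\E[L_1\mid H_1=y]\propto 2y-1$, not $y^2$.

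Fortunately this error is in the harmless direction for \eqref{InegProbaHT0}: since the density is increasing near $0$, the probability is at least $(\text{threshold})\times f(0^+)$, and your underestimate $1/5$ still gives $\p[H(T_0)\le\log n]\gtrsim C_2\log^{(2)} n/h_n > C_2\log^{(2)} n/\log n$; with the true $1/3$ the margin is a comfortable factor $5/3$. The two $O(\cdot)$ bounds \eqref{InegProbaHCardTi} and \eqref{InegProbaHCardTi2} are insensitive to the precise constant. But the interpretive claim that the coefficient $5$ in $h_n=\log n - 5C_2\log^{(2)} n$ is ``calibrated precisely to cancel the $1/5$'' is a mischaracterization of the calculation, and more importantly, if you write up this proof you should cite Cheliotis's formula (11) directly rather than rely on the $H^2$-biasing heuristic, which does not produce the correct normalization.
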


In particular, the probability that the height of the central valley for $W^{(i)}$ is less than $\log n$ is not negligible.
However, with large enough probability, all the valleys close to $0$ except maybe one are large, with height
greater than $\log n + C_2\log^{(2)} n$.

\begin{proof}[Proof of Lemma \ref{LemmaCardinalHTiPetits}]
Let $\widetilde h_n:=h_n-2C_2\log^{(2)} n$.
First, due to (\cite{NP}, Prop. 1, see also \cite{Cheliotis} eq. (8)),
$e\big(T_i\big(\sigma W,\widetilde h_n\big)\big)/\widetilde h_n$ is for $i\neq 0$ an exponential random variable with mean $1$.
Consequently, for $i\neq 0$ and large $n$,
$$
    \p\big[H\big(T_i\big(\sigma W, \widetilde h_n\big)\big)\leq \log n+C_2\log^{(2)} n \big]
=
    \p\big[e\big(T_i\big(\sigma W, \widetilde h_n\big)\big)\leq 8C_2\log^{(2)} n\big]
\leq
    \frac{9C_2\log^{(2)} n}{\log n}.
$$
Observe that $e\big(T_0\big(\sigma W,\widetilde h_n\big)\big)/\widetilde h_n$
is by scaling equal in law to $e(T_0(W,1))$, which has
 a density equal to
$
    (2x+1)e^{-x}{\mathbf 1}_{(0,\infty)}(x)/3
$
due to (\cite{Cheliotis}, formula (11)).
Hence for large $n$,
\begin{eqnarray*}
    C_2(\log^{(2)} n)(\log n)^{-1}
& \leq &
    \p\big[e\big(T_0\big(\sigma W,\widetilde h_n\big)\big)\leq 5C_2\log^{(2)} n\big]
\\
& \leq &
    \p\big[e\big(T_0\big(\sigma W,\widetilde h_n\big)\big)\leq 8C_2\log^{(2)} n\big]
\leq
    9C_2(\log^{(2)} n)(\log n)^{-1}.
\end{eqnarray*}
This remains true if $\widetilde h_n$ is replaced by $h_n$.
These inequalities already prove \eqref{InegProbaHT0} and \eqref{InegProbaHCardTi2}.
Moreover, due to (\cite{NP}, Prop. 1), the slopes $T_i\big(\sigma W, \widetilde h_n\big)$, $i\in\Z$ are independent,
up to their sign, so the
random variables $H\big(T_i(\sigma W,\widetilde h_n)\big)$, $i\in\Z$ are independent.
This and the previous inequalities lead to \eqref{InegProbaHCardTi}.
\end{proof}

Because of \eqref{InegProbaHT0}, it seems reasonable to consider strictly more than one valley of height at least $h_n$ if we
want to localize a recurrent RWRE with probability $\geq 1-(\log n)^{-2+\e}$ for $\varepsilon>0$.

We first introduce some notation.
Let, for $i \in \{1,2\}$, $j\in\Z$ and $n\geq 3$,
\begin{align*}
&
    \Xi_{n,j}\big(W^{(i)}\big)
\\
& :=
    \big\{
        x\in\big[M_{j-1}\big(W^{(i)},h_n\big), M_j\big(W^{(i)},h_n\big)\big],
        \ W^{(i)}(x)\leq W^{(i)}\big(b_j\big(W^{(i)},h_n\big)\big)+C_2\log^{(2)} n
    \big\}.
\end{align*}
Loosely speaking, $\Xi_{n,j}\big(W^{(i)}\big)$ is the set of points with low potential in the $j$-th valley for $W^{(i)}$.
We also define
\begin{eqnarray*}
    \Xi_n\big(W^{(i)}\big)
& := &
    \bigcup_{j=-2}^2
        \Xi_{n,j}\big(W^{(i)}\big).
\end{eqnarray*}
In Proposition \ref{LemmaLocalisationVerticale} (proved in Section \ref{proofofLocVert}), we localize the RWRE $Z^{(i)}$ in a set of points
which are close to the $b_j(.)$ "vertically",
instead of "horizontally" as in Sinai's theorem (see \cite{S82}).

\begin{prop}\label{LemmaLocalisationVerticale}
Let $\varepsilon>0$
and $i\in\{1,2\}$.
For all $n$ large enough, we have
$$
    \mathbb P \big[Z_n^{(i)}\notin \Xi_n\big(W^{(i)}\big)\big]
\leq
    q_n:=(\log n)^{-2+\e}.
$$
\end{prop}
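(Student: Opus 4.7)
The plan is to couple $V^{(i)}$ to the Brownian approximation $W^{(i)}$ via Koml\'os--Major--Tusn\'ady, then to analyze typical trajectories of the RWRE through the finitely many candidate valleys near $0$.

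\textbf{Step 1: Reduction to a good Brownian geometry.} I would first work on an event $\mathcal{E}_n$ on which (a) the coupling of Lemma \ref{lem:KMT} gives $|V^{(i)}-W^{(i)}|\leq \tilde{C}_1 \log^{(2)} n$ on $[-(\log n)^\alpha, (\log n)^\alpha]$; (b) $M_{\pm 3}(W^{(i)}, h_n)\in[-(\log n)^\alpha, (\log n)^\alpha]$ via \eqref{InegHypothesesMiPetits}; and (c) at most one of the eleven slopes $T_j(W^{(i)}, \widetilde{h}_n)$, $|j|\leq 5$, has height at most $\log n + C_2 \log^{(2)} n$, where $\widetilde{h}_n := h_n - 2C_2\log^{(2)} n$, which by \eqref{InegProbaHCardTi} occurs with $\p$-probability $1 - O((\log^{(2)} n/\log n)^2)$. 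Adding the KMT tail $(\log n)^{-C_1}$ and \eqref{InegHypothesesMiPetits}, $\p[\mathcal{E}_n^c] = o((\log n)^{-2+\varepsilon})$. The constraint $C_2 \geq 2\alpha + 2 + 10\tilde{C}_1$ then ensures that on $\mathcal{E}_n$, every $V^{(i)}$-slope except possibly one has height at least $\log n + (C_2 - 2\tilde{C}_1)\log^{(2)} n$.

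\textbf{Step 2: Horizontal localization.} Next, I would use the ruin estimates \eqref{InegProba1}--\eqref{InegProba2} to bound, for any fixed large $V^{(i)}$-barrier, the $P_\omega$-probability of crossing it before time $n$ by $n\, e^{-\log n-(C_2-2\tilde{C}_1)\log^{(2)} n} = (\log n)^{-(C_2 - 2\tilde{C}_1)}$. Since $Z_0^{(i)}=0$ lies in the valley with bottom $b_0$, a short case analysis distinguishing according to which (if any) slope is small shows that, off a further event of $P_\omega$-probability $O((\log n)^{-(C_2 - 2\tilde{C}_1)})$, the walker stays in $\bigcup_{j=-1}^{1}[M_{j-1}(W^{(i)}, h_n), M_j(W^{(i)}, h_n)]\subset \bigcup_{j=-2}^{2}[M_{j-1}(W^{(i)}, h_n), M_j(W^{(i)}, h_n)]$: either $b_0$ is itself a large valley and $Z^{(i)}$ never escapes, or $b_0$ is the unique small valley, in which case $Z^{(i)}$ may visit $b_{\pm 1}$ but is then trapped there, since crossing the next (necessarily large) barrier costs probability $(\log n)^{-(C_2 - 2\tilde{C}_1)}$.

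\textbf{Step 3 (main obstacle): Vertical localization.} Finally, given that $Z_n^{(i)}$ lies in a valley $[M_{j-1}, M_j]$ with bottom $b_j$, I would show that $V^{(i)}(Z_n^{(i)}) \leq V^{(i)}(b_j) + C_2 \log^{(2)} n$ with $P_\omega$-probability $1 - o((\log n)^{-2+\varepsilon})$. By the formula \eqref{reversiblemeas} for the reversible measure, the total $\mu_\omega$-mass of the ``high'' points $\{x\in[M_{j-1},M_j] : V^{(i)}(x) > V^{(i)}(b_j) + C_2 \log^{(2)} n\}$ is bounded by $4 \cdot (\text{valley width})\cdot (\log n)^{-C_2} e^{-V^{(i)}(b_j)}$, i.e.\ at most $C (\log n)^{\alpha - C_2}$ times $\mu_\omega(b_j)$. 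The technical crux is upgrading this stationary-mass estimate to a pointwise bound on $P_\omega[Z_n^{(i)} = x]$. I would decompose the trajectory into excursions from $b_j$ inside the valley, each of $E_\omega^{b_j}$-mean length polynomial in $\log n$ by \eqref{InegEsperance1}--\eqref{InegEsperance2}, so that $\gg n/(\log n)^{O(1)}$ such excursions occur by time $n$; detailed balance identifies the expected fraction of excursion-time spent at any $x$ with $\mu_\omega(x)/\mu_\omega(b_j)$, and a slow-variation argument (using that $P_\omega^x[Z_2 = x]$ is bounded below uniformly) transfers this time-average into the pointwise bound $P_\omega[Z_n^{(i)} = x] \leq C\, \mu_\omega(x)/\mu_\omega(b_j)$. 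Summing over high $x$ yields $O((\log n)^{\alpha - C_2}) = o((\log n)^{-2+\varepsilon})$. This quantitative strengthening of Sinai's one-valley localization from $1-o(1)$ to $1-(\log n)^{-2+\varepsilon}$ is the real technical heart of the argument, and is precisely what forces the use of the multi-valley region $\Xi_n$ in view of Lemma \ref{LemmaCardinalHTiPetits}.
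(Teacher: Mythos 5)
Your three-step outline mirrors the architecture the paper actually uses: a good-environment event built from the KMT coupling plus the ``at most one small slope'' condition of Lemma~\ref{LemmaCardinalHTiPetits}, then horizontal localization into finitely many valleys, then vertical localization inside a valley. You also correctly identify \emph{why} a multi-valley set $\Xi_n$ is forced. But two of the steps have genuine gaps.

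\textbf{Step 3 misses the key tool.} You propose a chain of excursion decomposition, detailed-balance time-averages and a ``slow-variation argument'' to get the pointwise bound $P_\omega[Z_n^{(i)}=x]\le C\,\mu_\omega(x)/\mu_\omega(b_j)$. This slow-variation step is not routine: inside a deep valley the chain mixes only after a time exponential in the valley depth, so upgrading an occupation-time average to a bound on $P_\omega[Z_n^{(i)}=x]$ at a fixed $n$ is delicate, and your sketch does not make it precise. The paper avoids all of this with the one-line reversibility identity \eqref{InegReversible},
$P_\omega^{b_j}[Z_k=x]=\frac{\mu_\omega(x)}{\mu_\omega(b_j)}P_\omega^x[Z_k=b_j]\le\frac{\mu_\omega(x)}{\mu_\omega(b_j)}$,
valid for \emph{every} $k$, which yields \eqref{eq41Amelioree} directly and is then combined with the strong Markov property at $\tau(b_{\mathcal I_1})$ (or $\tau(b_{\mathcal I_2})$). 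You should replace your argument by this; it is both simpler and sharper.

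\textbf{Step 2 glosses over the hardest part.} The ``short case analysis'' with cost $O((\log n)^{-(C_2-2\tilde C_1)})$ captures only barrier-crossing events. The scenario the paper spends the most effort on is $E_2^c$: the walker first reaches the bottom of the unique small valley near $0$ and may or may not escape it by time $n$. Lemma~\ref{lemE2compl} has to decompose this into five sub-events $E_3,\dots,E_7$ in order to control, separately: (i) reaching the neighbouring deep valley and localizing there, (ii) being \emph{in transit} between the two valleys exactly around time $n$ --- which requires the hitting-time estimate of Lemma~\ref{LemmaProbaHittingTime}, not a ruin bound, (iii) still being inside the small valley at time $n$ (where \eqref{eq41Amelioree} is applied), (iv) escaping on the wrong side, and (v) taking too long to descend to the neighbour's bottom after exiting. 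Your single probability cost does not account for (ii), (iii) or (v). Finally, your claim that the walker stays in $\bigcup_{j=-1}^{1}[M_{j-1},M_j]$ is too tight: on $\mathcal G_n^+$, if $0$ is close to $M_{-1}$ the walker may first reach $b_{-1}$, and if that is the small valley it can then escape to $b_{-2}$, landing in $[M_{-3},M_{-2}]$; the correct range is $j\in\{-2,\dots,2\}$, as the paper's $\Xi_n$ encodes.
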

Proposition \ref{Lemmeannexe2} is then an easy consequence of Proposition \ref{LemmaLocalisationVerticale} and of the following estimate on the environments.
\begin{lem}\label{LemmaIntersectionBottoms2IndependentPotentials}
Let $\e>0$. For large $n$,
$$
    \p\big[ \Xi_n\big(W^{(1)}\big) \cap \Xi_n\big(W^{(2)}\big) \neq \emptyset \big]
\leq
    (\log n)^{-2+\e}.
$$
\end{lem}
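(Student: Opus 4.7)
The set $\Xi_{n,j}(W^{(i)})$ contains the bottom $b_j(W^{(i)}, h_n)$ and consists of those points in the $j$-th valley where $W^{(i)}$ is at most $C_2\log^{(2)} n$ above its local minimum. The plan is to show first that each $\Xi_{n,j}(W^{(i)})$ has very small Lebesgue measure with overwhelming probability, and then to use the independence of $W^{(1)}$ and $W^{(2)}$ together with a density estimate on the locations of the bottoms to show that two bottoms coming from \emph{different} Brownian motions are very unlikely to be close to each other. Since each $\Xi_{n,j}(W^{(i)})$ is contained in an interval of half-length $|\Xi_{n,j}(W^{(i)})|$ around its bottom, this suffices.

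\textbf{Step 1 (size of the low-potential regions).} Fix $L_n := (\log^{(2)} n)^4$. By Williams' path decomposition of Brownian motion around its minimum, conditional on the positions and values of $W^{(i)}$ at $b_j(W^{(i)}, h_n)$, $M_{j-1}(W^{(i)}, h_n)$, $M_j(W^{(i)}, h_n)$, the process $W^{(i)}(\cdot)-W^{(i)}(b_j(W^{(i)},h_n))$ behaves on each side of $b_j$ as a $3$-dimensional Bessel process started at $0$ and run up to its exit time from the relevant interval. Hence the Lebesgue measure of $\{x : W^{(i)}(x)-W^{(i)}(b_j(W^{(i)},h_n)) \leq C_2\log^{(2)} n\}$, restricted to either side, is stochastically dominated by the total time a $3$-dimensional Bessel process spends in $[0,C_2\log^{(2)} n]$; by Bessel scaling this equals in law $(C_2\log^{(2)} n)^2\,\mathcal L$, where $\mathcal L$ is the total time spent in $[0,1]$ by a $3$-dimensional Bessel process started at $0$. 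The variable $\mathcal L$ has moments of all orders (the process is transient, and $\E[\mathcal L^k] = k!\int_{0<t_1<\cdots<t_k}\p[R(t_1),\ldots,R(t_k)\leq 1]\,dt$ with $\p[R(t)\leq 1]=O(t^{-3/2})$), so a Markov inequality of order $\lceil 10/\log^{(2)} n\rceil$, say, yields
\[
    \p\bigl[|\Xi_{n,j}(W^{(i)})| > L_n\bigr] \leq (\log n)^{-10}.
\]
A union bound over $i\in\{1,2\}$ and $j\in\{-2,\ldots,2\}$ shows that the event $\mathcal G_n := \{\max_{i,j}|\Xi_{n,j}(W^{(i)})|\leq L_n\}$ satisfies $\p[\mathcal G_n^c]=o((\log n)^{-2})$.

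\textbf{Step 2 (separation of the bottoms).} By the Neveu--Pitman description of the $h$-extrema of Brownian motion \cite{NP}, the increments $b_{j+1}(W,1)-b_j(W,1)$ are i.i.d.\ with a bounded density, so for each $j\in\{-2,\ldots,2\}$ the law of $b_j(W,1)$ has a bounded density (the density of $b_0$ being obtained by Palm calculus, and the others by convolution). By Brownian scaling $b_j(W, h_n)\stackrel{\text{law}}{=} h_n^2\, b_j(W,1)$, so $b_j(W^{(i)}, h_n)$ has density bounded by $K_0/h_n^2$ for some constant $K_0$ uniform in $j\in\{-2,\ldots,2\}$. Using the independence of $W^{(1)}$ and $W^{(2)}$, Fubini's theorem then gives, for every pair $(j_1,j_2)\in\{-2,\ldots,2\}^2$,
\[
    \p\bigl[|b_{j_1}(W^{(1)}, h_n)-b_{j_2}(W^{(2)}, h_n)|\leq 2L_n\bigr] \leq 4 K_0 L_n / h_n^2 = O\bigl((\log^{(2)} n)^4 / (\log n)^2\bigr).
\]

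\textbf{Step 3 (conclusion).} On $\mathcal G_n$, since $b_j(W^{(i)}, h_n)\in\Xi_{n,j}(W^{(i)})$, each set $\Xi_{n,j}(W^{(i)})$ is contained in the interval of radius $L_n$ centered at its bottom. Consequently, $\Xi_{n,j_1}(W^{(1)})\cap\Xi_{n,j_2}(W^{(2)})\neq\emptyset$ forces $|b_{j_1}(W^{(1)}, h_n)-b_{j_2}(W^{(2)}, h_n)|\leq 2L_n$, so
\[
    \p\bigl[\Xi_n(W^{(1)})\cap\Xi_n(W^{(2)})\neq\emptyset\bigr] \leq \p[\mathcal G_n^c] + \sum_{j_1,j_2=-2}^{2}\p\bigl[|b_{j_1}(W^{(1)}, h_n)-b_{j_2}(W^{(2)}, h_n)|\leq 2L_n\bigr],
\]
which is bounded above by $o((\log n)^{-2})+25\cdot O((\log^{(2)} n)^4/(\log n)^2)\leq (\log n)^{-2+\e}$ for $n$ large. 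The main obstacle is Step~1: one must justify rigorously the Bessel description of $W^{(i)}$ near each $b_j(W^{(i)}, h_n)$ (taking into account the conditioning on the neighbouring $h_n$-maxima) and obtain a tail bound of the form $o((\log n)^{-2})$; Step~2 is essentially bookkeeping with the Neveu--Pitman structure and scaling.
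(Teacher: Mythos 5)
Your proposal takes a genuinely different route from the paper. The paper's own proof avoids any geometric control of the sets $\Xi_n\big(W^{(i)}\big)$: for each fixed integer $k$ it bounds $\p\big[k\in\Xi_n\big(W^{(i)}\big)\big]$ by observing that membership forces the two Brownian motions $\big(W^{(i)}(\pm x+k)-W^{(i)}(k)\big)_{x\ge 0}$ to hit $h_n-C_2\log^{(2)}n$ before $-2C_2\log^{(2)}n$, giving $O\big((\log^{(2)}n/\log n)^2\big)$ by a two-sided gambler's-ruin estimate; squaring by independence of $W^{(1)}$ and $W^{(2)}$ and summing over the $O\big((\log n)^{2+\e}\big)$ relevant sites (using \eqref{InegHypothesesMiPetits}) then yields the claim directly. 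Your decomposition into ``the low-potential regions are small'' plus ``the bottoms from independent environments are well separated'' is more structural, and if carried out rigorously would give finer information (an actual density bound on the bottoms, via Neveu--Pitman and scaling, rather than just a pointwise probability), but it requires far heavier machinery (Williams' decomposition, Bessel occupation-time tails, Palm calculus for the stationary point process of $h$-extrema) than the elementary computation the paper uses.

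As written, however, there are two concrete gaps. First, in Step 3 you pass from $|\Xi_{n,j}(W^{(i)})|\le L_n$ to $\Xi_{n,j}(W^{(i)})\subset\big[b_j(W^{(i)},h_n)-L_n,\,b_j(W^{(i)},h_n)+L_n\big]$. This is false in general: Lebesgue measure does not control the diameter of a sublevel set, which may well have small but faraway components (the Bessel paths can dip back below $C_2\log^{(2)}n$ after excursions away). What you actually need is a tail bound on the \emph{last exit time} of the Bessel$(3)$ process from $[0,C_2\log^{(2)}n]$ (equivalently, on the distance from $b_j$ to the farthest point of $\Xi_{n,j}$); this also scales like $\big(C_2\log^{(2)}n\big)^2$ and also has light tails, but it is a different random variable from the occupation time estimated in Step 1, and the two are not interchangeable. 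Second, the tail bound in Step 1 via ``a Markov inequality of order $\lceil 10/\log^{(2)}n\rceil$'' is not meaningful, since this exponent tends to $0$ and eventually rounds to $1$, yielding only a first-moment bound. To reach $(\log n)^{-10}$ from $\p\big[\mathcal L>(\log^{(2)}n)^2/C_2^2\big]$ one needs Markov's inequality at an order $k_n\to\infty$ (e.g.\ $k_n\asymp(\log^{(2)}n)^2$) together with a quantitative moment growth bound of the type $\E[\mathcal L^k]\le(Ck)^k$, i.e.\ genuine exponential tails for $\mathcal L$; the qualitative statement that all moments are finite is not enough. A third, more minor, point: the Williams/Bessel description of $W^{(i)}$ near $b_j$ is correct in spirit, but the conditioning you invoke (that $b_j$ is an $h_n$-minimum flanked by $M_{j-1},M_j$) is more delicate than ``Brownian motion around its minimum,'' and one has to appeal to the precise path decomposition underlying \cite{NP} rather than to the unconditioned Williams theorem.
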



\begin{proof}[Proof of Lemma \ref{LemmaIntersectionBottoms2IndependentPotentials}]
First, let $k\in \Xi_n\big(W^{(i)}\big)$ for some $i\in\{1,2\}$ and $n \geq 3$.
Hence
$k\in\big[M_{j-1}\big(W^{(i)},h_n\big), M_j\big(W^{(i)},h_n\big)\big]$
and
$W^{(i)}(k)\leq W^{(i)}\big(b_j\big(W^{(i)}, h_n\big)\big)+C_2\log^{(2)} n$
for some $j\in\{-2,-1,0,1,2\}$.
By definition of $h_n$-minima, we notice that the two Brownian motions
$\big(W^{(i)}(x+k)-W^{(i)}(k),\ x\geq 0\big)$
and
$\big(W^{(i)}(-x+k)-W^{(i)}(k),\ x\geq 0\big)$
hit $h_n-C_2\log^{(2)} n$ before $-2C_2\log^{(2)} n$.
By independence, it follows that, for $n$ large enough, for every $k\in\Z$ and $i\in\{1,2\}$,
\begin{eqnarray*}
    \p\big[k\in\Xi_n\big(W^{(i)}\big)\big]
& \leq &
    \p\big[T_{(i)}^+(h_n-C_2\log^{(2)} n)<T_{(i)}^+(-2C_2\log^{(2)} n)\big]
\nonumber\\
&&
    \qquad\times\p\big[\big|T_{(i)}^-(h_n-C_2\log^{(2)} n)\big|<\big|T_{(i)}^-(-2C_2\log^{(2)} n)\big|\big]
\nonumber\\
& \leq &O\left(((\log^{(2)} n)/\log n)^2\right),
\end{eqnarray*}
where
$T_{(i)}^+(z):=\inf\{x>0 \ :\ W^{(i)}(x)=z\}$
and
$T_{(i)}^-(z):=\sup\{x<0\ :\ W^{(i)}(x)=z\}$.
Consequently, since $W^{(1)}$ and $W^{(2)}$ are independent, we have uniformly on $k\in\Z$,
\begin{eqnarray}
    \p\big[k\in \Xi_n\big(W^{(1)}\big) \cap \Xi_n\big(W^{(2)}\big)\big]
& = &
    \p\big[k\in \Xi_n\big(W^{(1)}\big)\big]
    \p\big[k\in\Xi_n\big(W^{(2)}\big)\big]
\nonumber\\
& \leq &
    O\big((\log^{(2)} n)^4/(\log n)^4\big).
\label{eqProbaApparteniraXin}
\end{eqnarray}

Finally, \eqref{InegHypothesesMiPetits} applied with $2+\e>2$ instead of $\alpha$ and \eqref{eqProbaApparteniraXin} lead to
\begin{eqnarray*}
&&
    \p\big[ \Xi_n\big(W^{(1)}\big) \cap \Xi_n\big(W^{(2)}\big) \neq \emptyset \big]
\\
& \leq &
    o\big((\log n)^{-2}\big)
    +
    \sum_{k=-\lfloor(\log n)^{2+\e}\rfloor}^{\lfloor (\log n)^{2+\e}\rfloor}
    \p\big[k\in \Xi_n\big(W^{(1)}\big) \cap \Xi_n\big(W^{(2)}\big)\big]
\\
& \leq &
   O\left((\log^{(2)} n)^4(\log n)^{-2+\e}\right)
\leq
    (\log n)^{-2+2\e},
\end{eqnarray*}
for every $n$ large enough.
Since this is true for every $\e>0$, this proves the lemma.
\end{proof}

\begin{proof}[Proof of Proposition \ref{Lemmeannexe2}]
We have for large $n$, due to Proposition \ref{LemmaLocalisationVerticale},
\begin{eqnarray*}
    \mathbb P \big[Z_n^{(1)}=Z_n^{(2)}\big]
& \leq &
    \mathbb P \big[Z_n^{(1)}=Z_n^{(2)}, Z_n^{(1)}\in \Xi_n\big(W^{(1)}\big), Z_n^{(2)}\in \Xi_n\big(W^{(2)}\big)\big]
\\
&&
    \qquad
    +\mathbb P \big[ Z_n^{(1)}\notin \Xi_n\big(W^{(1)}\big)\big]
    +\mathbb P \big[ Z_n^{(2)}\notin \Xi_n\big(W^{(2)}\big)\big]
\\
& \leq &
    \p\big[ \Xi_n\big(W^{(1)}\big) \cap \Xi_n\big(W^{(2)}\big) \neq \emptyset \big]
    +2q_n.
\end{eqnarray*}
This and Lemma \ref{LemmaIntersectionBottoms2IndependentPotentials} prove
 	Proposition \ref{Lemmeannexe2}.
\end{proof}
\subsection{Proof of Proposition \ref{LemmaLocalisationVerticale}}
\label{proofofLocVert}
We fix $i\in\{1,2\}$.
To simplify notations we write $V$ for $V^{(i)}$,  $Z_n$ for $Z_n^{(i)}$ and $W$
for $W^{(i)}$.

The difficulty of this proof is that we have to localize $Z_n$ with probability $1-(\log n)^{-2+\e}$ instead of $1-o(1)$ as Sinai did in \cite{S82}. For this reason we need to take into account some cases which are usually considered to be negligible.
In order to prove Proposition \ref{LemmaLocalisationVerticale}, we first build a set  $\mathcal{G}_n$ of good environments, having high probability.
We prove that on such a good environment, the RWRE $Z=(Z_n)_n$ will reach quickly the bottom $b_{\mathcal I_1}$ of one of the two valleys of $W$ surrounding $0$. We need to consider these two valleys because we cannot neglect the case in which $0$ is close to the maximum of the potential between these two valleys.
\\
\\
Also, we cannot exclude that the valley surrounding $b_{\mathcal I_1}$ is "small",
that is, its height is close to $\log n$.
Then, we have to consider two situations. If the height of this valley is quite larger than $\log n$, then with large probability, $Z$ stays in this valley up to time $n$ (see Lemma \ref{lemE2}).
Otherwise (in the most difficult case, Lemma \ref{lemE2compl}), $Z$ can escape the valley surrounding  $b_{\mathcal I_1}$ before time $n$,
and in this case, with large probability, it reaches before time $n$ the bottom $b_{\mathcal I_2}$ of a neighbouring valley and stays in this valley up to time $n$.
In both situations, we prove that $Z_n$ is localized in $\Xi_n(W)$, and more precisely in the deepest places of the last valley visited before time $n$.
In order to prove this localization, we use the invariant measure of a RWRE in our environment, started at $b_{\mathcal I_1}$ or $b_{\mathcal I_2  }$.

We fix $\varepsilon>0$.
Recall \eqref{hndef}.
We  introduce for $j\in\Z$,
$$
    x_j
:=
    \lfloor x_j(W,h_n)\rfloor,
\qquad
    b_j
:=
    \lfloor b_j(W,h_n)\rfloor,
\qquad
    M_j
:=
    \lfloor M_j(W,h_n)\rfloor.
$$
\begin{figure}[htbp]
\includegraphics[scale=0.92]{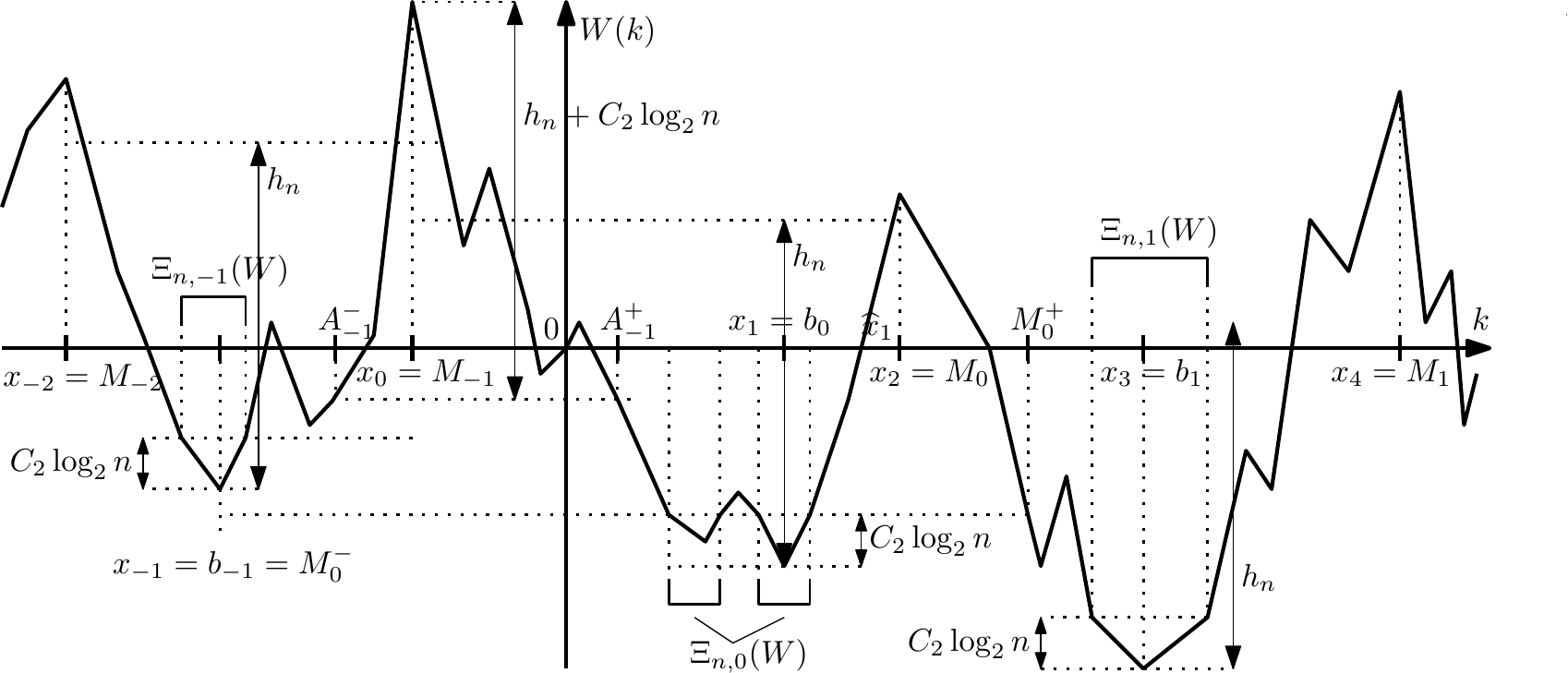}
\caption{Pattern of $W$ for a good environment $\omega\in\mathcal{G}_n$ and representation of different quantities.
}
\label{figure2_Good_Env}
\end{figure}

\noindent
We denote by $\mathcal{G}_n$ the set of {\bf good environments} $\omega$
satisfying \eqref{InegHypothesesMiPetits} together with the
following properties (see Figure \ref{figure2_Good_Env}):
\begin{equation}\label{KMT1}
\sup_{|t|\le(\log n)^\alpha}\big|V(\lfloor t\rfloor)-W(t)\big|\leq \tilde C_1\log^{(2)} n,
\end{equation}
\begin{equation}\label{eqDefGn1}
   \sharp\{j\in\{-5,..., 5 \},\ H(T_j(W,h_n-2C_2\log^{(2)} n))\leq \log n+C_2\log^{(2)} n\}\leq 1,
\end{equation}
with $h_n$ defined in \eqref{hndef}.
For every $n$ large enough, we have
\begin{equation}\label{PGn}
    \p\big[(\mathcal{G}_n)^c\big]
\leq
    (\log n)^{-2+\e}
,
\end{equation}
due to  \eqref{InegHypothesesMiPetits} and to Lemmas \ref{lem:KMT} and \ref{LemmaCardinalHTiPetits}, since $C_1>2$ .

We now consider
$\omega\in\mathcal{G}_n^+$ where $\mathcal{G}_n^+:=\mathcal{G}_n\cap\{x_1(W,h_n) \text{ is an $h_n$-minimum}\}$, that is,
$$
    b_{-1}(W,h_n)
=
    x_{-1}(W,h_n)
<
    x_0(W,h_n)
=
    M_{-1}(W,h_n)
\leq
    0
<
    b_0(W,h_n)
=
    x_1(W,h_n).
$$
Indeed, the other case, that is, $x_0(W,h_n)$ is an $h_n$-minimum, or equivalently
$\omega \in\mathcal{G}_n^-$ with $\mathcal{G}_n^-:=\mathcal{G}_n \backslash \mathcal{G}_n^+$,
is similar by symmetry.

\begin{proof}[Proof of Proposition \ref{LemmaLocalisationVerticale}]
Let us see how we can derive Proposition \ref{LemmaLocalisationVerticale}
from \eqref{PGn} and from Lemmas  \ref{lemAtteinteb_j}, \ref{lemE2} and \ref{lemE2compl} below.
Applying Lemma \ref{lemAtteinteb_j} with $y=0$ and $j=-1$ on $\mathcal{G}_n^+$,
the random walk $Z$ goes quickly to $b_{-1}$ or $b_0$ with high probability.
More precisely, setting $E_1:=\{\tau(b_{-1})\wedge\tau(b_0)\leq n(\log n)^{-3C_2}\}$,
there exists $\tilde n_0\in\N$ such that, for every $n\ge\tilde n_0$,
\begin{equation}
\label{controlE1a}
    \forall\omega\in\mathcal G_n^+,
\qquad
    P_\omega(E_1)\geq 1-(\log n)^{-2}.
\end{equation}
Due to Lemmas \ref{lemE2} and \ref{lemE2compl}, there exists $\tilde n_1\in\N$ such that, for every
$n\ge \tilde n_1$,
\begin{equation*}
    \forall \omega\in\mathcal{G}_n^+,
\qquad
    P_{\omega} \big[E_1,\ Z_n\notin \Xi_n(W)\big]
\leq
    11(\log n)^{-2}
\end{equation*}
and so, using \eqref{controlE1a},
$$
    \forall n\ge\max(\tilde n_0,\tilde n_1),
\quad
    \forall \omega\in\mathcal{G}_n^+,
\qquad
    P_{\omega} \big[Z_n\notin \Xi_n(W)\big]
\leq
    12(\log n)^{-2}.
$$
By symmetry, this remains true with $\mathcal{G}_n^+$ replaced by $\mathcal{G}_n^-$.
Therefore, due to \eqref{PGn}, for every $n$ large enough,
$$
    \mathbb P \big[Z_n\notin \Xi_n(W)\big]
\leq
    \int_{\mathcal{G}_n}
    P_{\omega} \big[Z_n\notin \Xi_n(W)\big]
    \p(\text{d}\omega)
    +
    \p\big[(\mathcal{G}_n)^c\big]
\leq
    2(\log n)^{-2+\e}.
$$
Since this is true for every $\e>0$, this proves Proposition \ref{LemmaLocalisationVerticale}.
\end{proof}

We will use the following property. For $j\in\Z$, let
\begin{eqnarray*}
    \widehat \mu_j(x)
& := &
    \exp\big[-\big(V(x)-V(b_j)\big)\big]
    +
    \exp\big[-\big(V(x-1)-V(b_j)\big)\big]
\\
& = &
    \exp\big[V(b_j)\big]
    \mu_\omega(x),
\qquad
    x\in\Z,
\end{eqnarray*}
with reversible measure $\mu_\omega$ defined in \eqref{reversiblemeas}.
It follows from reversibility that
\begin{equation}
\label{InegReversible}
    \forall k\in\N,\,
    \forall x\in\Z,\,
    \forall b\in\Z,\,
\quad
P_{\omega}^b[Z_k=x]
=
    \frac{\mu_\omega(x)}{\mu_\omega(b)} P_{\omega}^x[Z_k=b] \leq \frac{\mu_\omega(x)}{\mu_\omega(b)}
\leq
    \exp[V(b)]\mu_\omega(x)\, .
\end{equation}
In particular,
\begin{equation}\label{InegInvariantMeasureInduction}
    \forall j\in\Z,\
    \forall k\in\N,\
    \forall x\in\Z,
\qquad
    P_{\omega}^{b_j}[Z_k=x]
\leq
    \widehat \mu_j(x).
\end{equation}

\medskip

\begin{lem}\label{lemAtteinteb_j}
There exists $n_0\in\N$ such that, for every $n\ge n_0$, every $\omega\in\mathcal G_n$,
every $j\in\{-2,...,1\}$ and every integer
$y \in]b_j, b_{j+1}[$,
\begin{equation}\label{eqLemmaSortieVallee}
    P_\omega^y
    \big[
        \tau\left(b_j\right)\wedge\tau\left(b_{j+1}\right)
        >
        n(\log n)^{-3C_2}
    \big]
\leq
    (\log n)^{-2}.
\end{equation}
\end{lem}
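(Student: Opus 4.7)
The plan is, via Markov's inequality, to reduce \eqref{eqLemmaSortieVallee} to the estimate $E_\omega^y[\tau(b_j)\wedge\tau(b_{j+1})]\le n(\log n)^{-3C_2-2}$, and then, because $y$ lies on the ``mountain'' between the two valley bottoms $b_j,b_{j+1}$ rather than inside a valley, to split the walk at the mountain top $M_j$. A straight application of \eqref{InegEsperance1} or \eqref{InegEsperance2} to the full interval $(b_j,b_{j+1})$ would produce an exponent of order $\max(H(T_j(W,h_n)),H(T_{j+1}(W,h_n)))$, which by \eqref{eqDefGn1} can exceed $\log n+C_2\log^{(2)} n$ on at least one side, so one really must exploit the $h_n$-extrema structure on each half separately.

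The crucial input is a half-slope oscillation bound: since no $h_n$-extremum of $W$ lies strictly between $b_j$ and $M_j$, a short contradiction argument (if some $W(x_1)-W(x_2)\ge h_n$ with $b_j< x_1<x_2< M_j$, then the argmax of $W$ on $[b_j,x_2]$ would be a new $h_n$-max) gives $\sup_{b_j\le x_1\le x_2\le M_j}(W(x_1)-W(x_2))<h_n$, and symmetrically the forward fluctuation on $[M_j,b_{j+1}]$ is $<h_n$. Since by \eqref{InegHypothesesMiPetits} we have $b_j,M_j,b_{j+1}\in[-(\log n)^\alpha,(\log n)^\alpha]$, the KMT bound \eqref{KMT1} transfers both inequalities to $V$ at the price of $2\tilde C_1\log^{(2)} n$. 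Feeding the forward bound into \eqref{InegEsperance1} with $a=M_j$, $b=y\in(M_j,b_{j+1})$, $c=b_{j+1}$ yields
$$
E_\omega^y[\tau(M_j)\wedge\tau(b_{j+1})]\le \varepsilon_0^{-1}(b_{j+1}-M_j)^2\, e^{h_n+2\tilde C_1\log^{(2)} n}=:T_0\le n(\log n)^{2\alpha+2\tilde C_1-5C_2},
$$
and symmetrically by \eqref{InegEsperance2} on $(b_j,M_j)$ for $y\in(b_j,M_j)$; the case $y=M_j$ reduces to either of these after one step.

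To handle the crossings of $M_j$, one estimates via \eqref{probaatteinte} and $V(k)\le V(M_j)+2\tilde C_1\log^{(2)} n$ on $[M_j,b_{j+1}]$:
$$
P_\omega^{M_j+1}\bigl[\tau(b_{j+1})<\tau(M_j)\bigr]\ge \frac{e^{V(M_j)}}{(b_{j+1}-M_j)\,e^{V(M_j)+2\tilde C_1\log^{(2)} n}}\ge \tfrac12(\log n)^{-\alpha-2\tilde C_1},
$$
and likewise on the left. Combined with $\omega_{M_j},\,1-\omega_{M_j}\ge\varepsilon_0$, the probability $p_0$ that an excursion leaving $M_j$ reaches $\{b_j,b_{j+1}\}$ before returning to $M_j$ satisfies $p_0\ge\tfrac12\varepsilon_0(\log n)^{-\alpha-2\tilde C_1}$.

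Iterating via the strong Markov property at successive returns to $M_j$ (whose number is stochastically dominated by a geometric variable of mean $1/p_0$), each excursion having expected length at most $T_0+1$, one obtains $E_\omega^y[\tau(b_j)\wedge\tau(b_{j+1})]\le T_0+(T_0+1)/p_0\le n(\log n)^{3\alpha+4\tilde C_1+O(1)-5C_2}$, which under the hypothesis $C_2\ge 2\alpha+2+10\tilde C_1$ is bounded by $n(\log n)^{-3C_2-2}$; Markov's inequality then delivers \eqref{eqLemmaSortieVallee}. The main obstacle is the first step: turning the qualitative ``no intermediate $h_n$-extremum'' property of the continuous $W$ into a quantitative oscillation bound for the integer-indexed $V$, absorbing the KMT error and the discretization of $b_j(W,h_n)$ and $M_j(W,h_n)$ into the $O(\log^{(2)} n)$ term; everything afterwards reduces to bookkeeping of powers of $\log n$.
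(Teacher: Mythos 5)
Your proof is correct but follows a genuinely different route from the paper's. Both arguments share the same two preliminary inputs: (a) the observation that on $[M_j(W,h_n),b_{j+1}(W,h_n)]$ the Brownian path has no $h_n$-maximum, hence the forward oscillation $\sup_{u\le v}(W(v)-W(u))<h_n$, and (b) the KMT transfer \eqref{KMT1} from $W$ to $V$ at cost $2\tilde C_1\log^{(2)} n$. From there the strategies diverge. You split the interval only at the peak $M_j$ and run an excursion-counting argument: each half-excursion has expected length at most $T_0\lesssim n(\log n)^{2\alpha+2\tilde C_1-5C_2}$ by \eqref{InegEsperance1}/\eqref{InegEsperance2}, the escape probability per excursion from $M_j$ is $p_0\gtrsim(\log n)^{-\alpha-2\tilde C_1}$ by \eqref{probaatteinte}, and a Wald-type bound gives $E_\omega^y[\tau(b_j)\wedge\tau(b_{j+1})]\lesssim T_0/p_0$; Markov's inequality then finishes. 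The paper instead introduces intermediate levels $A_j^\pm$ where $W$ has dropped by $h_n+C_2\log^{(2)}n$ below the peak, applies Markov's inequality to bound the time to reach $\{A_j^-,A_j^+\}$, and — once past $A_j^+$ — shows via a separate hitting-probability estimate \eqref{pmin} that the walk reaches $b_{j+1}$ \emph{without returning to $M_j$}, so no excursion count is needed. Your approach is more elementary but pays an extra factor of $1/p_0\asymp(\log n)^{\alpha+2\tilde C_1}$ in the expectation, giving the weaker intermediate estimate $E_\omega^y[\tau(b_j)\wedge\tau(b_{j+1})]\lesssim n(\log n)^{3\alpha+4\tilde C_1-5C_2}$ versus the paper's implicit $n(\log n)^{2\alpha+2\tilde C_1-5C_2}$-type control; because the hypothesis $C_2\ge 2\alpha+2+10\tilde C_1$ is chosen with substantial slack, both arguments comfortably yield the target $(\log n)^{-2}$ (yours in fact yields $O((\log n)^{-\alpha-4})$). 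The paper's route is tighter and would tolerate a smaller $C_2$; yours is conceptually simpler, avoiding the introduction of the auxiliary stopping levels $A_j^\pm$ and the associated case split.
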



\begin{proof}
Let $j\in\{-2,...,1\}$ and $\omega\in\mathcal G_n$.
Assume for example that $y\in[M_j,b_{j+1}[$, the proof being symmetric in the case when $y\in]b_j,M_j]$.
We set (see Figure \ref{figure2_Good_Env} for $j=-1$)
\begin{eqnarray*}
    A_j^+
& := &
    \min(b_{j+1},\inf\{k\geq M_j\ : \ W(k)\le W(M_j(W,h_n))-h_n-C_2\log^{(2)} n\}),
\\
    A_j^-
& := &
    \max(b_{j},\sup\{k\leq M_j : \ W(k)\le W(M_j(W,h_n))-h_n-C_2\log^{(2)} n\}).
\end{eqnarray*}
1.
If $y\in[M_j,A_j^+[$,
due to \eqref{InegEsperance1}, \eqref{InegHypothesesMiPetits} and \eqref{KMT1},
applying Markov's inequality, we get
\begin{multline*}
    P_\omega^y\left[\tau(A_j^-)\wedge\tau(A_j^+)>\frac{e^{h_n+2C_2\log^{(2)} n}}{2}\right]
\leq
    \frac{2\e_0^{-1}[b_{j+1}-b_{j}]^2}{e^{h_n+2C_2\log^{(2)} n}}\exp\Big[\max_{[b_j,b_{j+1}]} V-\min_{[A_j^-, A_j^+]} V\Big]
\\
\leq
    \frac{2\e_0^{-1}[M_2-b_{-2}]^2}{e^{h_n+2C_2\log^{(2)} n}}
    e^{\big(W(M_j(W,h_n))+\tilde C_1\log^{(2)} n\big)-\big(W(M_j(W,h_n))-h_n-C_2\log^{(2)} n-\tilde C_1\log^{(2)} n-\log\e_0^{-1}\big)}
\\
\le
    \frac{8\varepsilon_0^{-1}(\log n)^{2\alpha}e^{h_n+(C_2+2\tilde C_1)\log^{(2)} n+\log \e_0^{-1}}}{e^{h_n+2C_2\log^{(2)} n}}
\le
    8\varepsilon_0^{-2}(\log n)^{2\alpha+2\tilde C_1-C_2}
\le
    \frac 13(\log n)^{-2}
\end{multline*}
for every $n$ large enough, where we used
$\sup_{[b_j(W,h_n),b_{j+1}(W,h_n)]}W=W(M_j(W,h_n))$ and
$V(A_j^{\pm})\geq V(A_j^{\pm}\mp 1)-\log\frac{1-\e_0}{\e_0}$ in the second line
and $C_2>2\alpha+2\tilde C_1+2$
in the last one.
Hence by the strong Markov property, for $n$ large enough, for every $y\in[M_j,A_j^+]$,
\begin{multline}
\label{eqPobataubjprocheMj}
    P_\omega^y\left[\tau(b_j)\wedge\tau(b_{j+1})>e^{h_n+2C_2\log^{(2)} n}\right]
\\
\le
    \frac 13(\log n)^{-2}+ P^{A_j^-}_\omega\left[\tau(b_j)>e^{h_n+2C_2\log^{(2)} n}/2\right]
    +
    P^{A_j^+}_\omega\left[\tau(b_{j+1})>e^{h_n+2C_2\log^{(2)} n}/2\right].
\end{multline}
2. Assume now that $y\in[A_j^+,b_{j+1}[$ (and so $A_j^+<b_{j+1}$).
Observe that $W$ admits no $h_n$-maximum in the interval $]M_j(W,h_n),b_{j+1}(W,h_n)]$ by definition of $M_j(.)$,
so
$$\max_{M_j(W,h_n)\leq u\leq v\leq b_{j+1}(W,h_n)}(W(v)-W(u))<h_n.$$
Hence due to \eqref{InegEsperance1}, \eqref{InegHypothesesMiPetits}, \eqref{KMT1}, and to Markov's inequality, we have
\begin{align}
    P^{y}_\omega\left[\tau(M_j)\wedge\tau(b_{j+1})>e^{h_n+2C_2\log^{(2)} n}/2\right]
&\le
    \frac{2[M_2-b_{-2}]^2}{\e_0 e^{h_n+2C_2\log^{(2)} n}}
    \exp\Big[\max_{M_j\leq \ell\leq k\leq b_{j+1}}\big(V(k)-V(\ell)\big)\Big]
\nonumber\\
&\le
    \frac{8\varepsilon_0^{-1}(\log n)^{2\alpha} e^{h_n+2\tilde C_1\log^{(2)} n}}{e^{h_n+2C_2\log^{(2)} n}}
\le
    \frac 16(\log n)^{-2}
\label{pinf}
\end{align}
for every $n$ large enough, since $2C_2>2\alpha+2\tilde C_1+2$.
Moreover, due to \eqref{probaatteinte}, \eqref{InegHypothesesMiPetits} and \eqref{KMT1},
and since there is no $h_n$-maximum in $[A_j^+, b_{j+1}]$
and so $\sup_{[A_j^+, b_{j+1}]}W<W(A_j^+)+h_n$,
\begin{eqnarray}
    P^{y}_\omega\left[\tau(M_j)<\tau(b_{j+1})\right]
& \leq &
    \bigg(\sum_{\ell=A_j^+}^{b_{j+1}-1}e^{V(\ell)}\bigg)\bigg(\sum_{\ell=M_j}^{b_{j+1}-1}e^{V(\ell)}\bigg)^{-1}
\nonumber\\
&\le&
    \big[b_{j+1}-A_j^+\big] \exp\Big(\max\limits_{\ell\in\{A_j^+,...,b_{j+1}\}}V(\ell)\Big)\exp\big(-V(M_j)\big)
\nonumber\\
&\le&
    2(\log n)^{\alpha}\exp\big[W(A_j^+)+h_n-W(M_j(W,h_n))+2\tilde C_1\log^{(2)} n\big]
\nonumber\\
&\le&
    2(\log n)^{\alpha-C_2+2\tilde C_1}
\le
    \frac 16(\log n)^{-2}\label{pmin}
\end{eqnarray}
for every $y\in[A_j^+,b_{j+1}[$
for all $n$ large enough, since $C_2>\alpha+2\tilde C_1+2$.
Gathering \eqref{pinf} and \eqref{pmin}, we get, for all $n$ large enough,
for every $y\in[A_j^+,b_{j+1}[$, uniformly on $\mathcal{G}_n$ as the previous inequalities,
\begin{equation}\label{eqProbaAtteintebj+1}
  P^{y}_\omega\left[\tau(b_{j+1})>n(\log n)^{-3C_2}/2\right]=   P^{y}_\omega\left[\tau(b_{j+1})>e^{h_n+2C_2\log^{(2)} n}/2\right]
\le
    \frac 13(\log n)^{-2},
\end{equation}
recalling \eqref{hndef}. This already proves \eqref{eqLemmaSortieVallee} for $y\in[A_j^+,b_{j+1}[$.

3. For symmetry reasons, we also get that, for every $n$ large enough,
for every $y\in]b_j,A_j^-]$,
\begin{equation}\label{eqProbaAtteintebj}
    P^{y}_\omega\left[\tau(b_{j})>e^{h_n+2C_2\log^{(2)} n}/2\right]\le \frac 13(\log n)^{-2}.
\end{equation}
Finally, combining \eqref{eqPobataubjprocheMj} with \eqref{eqProbaAtteintebj+1} and \eqref{eqProbaAtteintebj} proves
\eqref{eqLemmaSortieVallee} for $y\in[M_j, A_j^+[$. Hence,
\eqref{eqLemmaSortieVallee} is true for $y\in[M_j, b_{j+1}[$ thanks to 2.,
and for $y\in]b_j, M_j]$ by symmetry. This proves the lemma.
\end{proof}
We consider $\mathcal I_1\in\{-1,0\}$ such that $\tau\big(b_{\mathcal I_1}\big)=\tau(b_{-1})\wedge\tau(b_0)$.
Recall that
$
    E_1
=
    E_1(n)
:=
    \{
    \tau\big(b_{\mathcal I_1}\big)
    \leq \alpha_n\}
$,
where we set
\begin{equation}\label{alphandef}
    \alpha_n
:=
    n(\log n)^{-3C_2}.
\end{equation}
We already saw in \eqref{controlE1a} that, thanks to Lemma \ref{lemAtteinteb_j} with $y=0$ and $j=-1$, we have
\begin{equation*}
    \forall\omega\in\mathcal G_n^+,
\qquad
    P_\omega(E_1)\geq 1-(\log n)^{-2}.
\end{equation*}
We consider the event $E_2=E_2(n)$ on which $Z$ first goes to the bottom
of a "deep" valley:
\begin{eqnarray*}
    E_2^+(j)
& := &
    \{W[M_{j}(W,h_n)]-W[b_{j}(W,h_n)] > \log n+C_2\log^{(2)} n\},
\qquad j\in\Z,
\\
    E_2^-(j)
& := &
    \{W[M_{j-1}(W,h_n)]-W[b_{j}(W,h_n)] > \log n+C_2\log^{(2)} n\},
\qquad j\in\Z,
\\
    E_2
& := &
    E_2^+(\mathcal I_1)\cap E_2^-(\mathcal I_1).
\end{eqnarray*}
Notice that this event depends on $\omega$ but also on the first
steps of $Z$ up to time $\tau(b_{I_1})$.
Similarly as in \eqref{InegProbaHCardTi2}, this event happens
with probability $1-O((\log n)^{-1}\log^{(2)} n)$, so we cannot neglect $E_2^c$.
We will treat separately the two events $E_2$ and $E_2^c$ (the study of $E_2^c$
being more complicated).
Before considering these two events, we state the following useful result.
We introduce for $j\in\Z$,
\begin{eqnarray}
    M_{j}^+
& := &
    b_{j+1}
    \wedge
    \inf\{k\geq M_{j},\ W(k)\leq W(b_{j}(W,h_n))+C_2\log^{(2)} n\},
\label{eqDefMj+}
\\
    M_{j}^-
& := &
    b_{j-1}
    \vee
    \sup\{k\leq M_{j-1},\ W(k)\leq W(b_{j}(W,h_n))+C_2\log^{(2)} n\},
\label{eqDefMj-}
\end{eqnarray}
where $u\vee v:=\max(u,v)$, $(u,v)\in\R^2$, so that
\begin{equation}\label{eqEp15}
    \forall k\in ]M_j^-,M_{j-1}]\cup [M_j,M_j^+[,
\quad
    W(k)>W(b_{j}(W,h_n))+C_2\log^{(2)} n.
\end{equation}

\smallskip

\begin{lem}
For every $n$ large enough,
\begin{equation}
\label{eq41Amelioree}
    \forall \omega\in\mathcal{G}_n,\,
    \forall j\in\{-2,\dots,2\},
\quad
    \sup_{k\geq 0}
    P_\omega^{b_{j}}\big(Z_k\in[M_{j}^-, M_j^+]\setminus \Xi_{n}(W)   \big)
\leq
    (\log n)^{-2}.
\end{equation}
\end{lem}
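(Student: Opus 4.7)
The uniformity in $k$ will come for free from reversibility: by \eqref{InegInvariantMeasureInduction} we have, independently of $k$,
$$
P_\omega^{b_j}[Z_k=x]\;\le\;\widehat\mu_j(x)\;=\;e^{V(b_j)-V(x)}+e^{V(b_j)-V(x-1)},
$$
so it suffices to bound
$\sum_{x\in B}\widehat\mu_j(x)$,
where $B:=[M_j^-,M_j^+]\setminus\Xi_n(W)$. The strategy is therefore to show that every integer $x\in B$ has ``high potential'' relative to $b_j$, deduce the same for $x-1$ using ellipticity, and finally sum, exploiting that $|B|=O((\log n)^\alpha)$.

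First I would argue that for every $x\in B$, one has $W(x)>W(b_j(W,h_n))+C_2\log^{(2)}n$. This is a short case analysis on the location of $x$ inside $[M_j^-,M_j^+]$: if $x\in\;]M_j^-,M_{j-1}]\cup[M_j,M_j^+[$ the inequality is exactly \eqref{eqEp15}; if $x\in\;]M_{j-1},M_j[$ and $x\notin\Xi_{n,j}(W)$ this is by definition of $\Xi_{n,j}(W)$; the endpoints $x=M_j^-,M_j^+$ lie in $\Xi_n(W)$ by construction, so they do not belong to $B$. Next, I would use the KMT bound \eqref{KMT1} (valid on $\mathcal G_n$ for $|t|\le(\log n)^\alpha$, and both $b_j$ and every $x\in[M_j^-,M_j^+]$ lie in this range thanks to \eqref{InegHypothesesMiPetits}) to translate the preceding bound from $W$ to $V$:
$$
V(b_j)-V(x)\;\le\;W(b_j(W,h_n))+\tilde C_1\log^{(2)}n-W(x)+\tilde C_1\log^{(2)}n\;<\;-(C_2-2\tilde C_1)\log^{(2)}n.
$$

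To handle the second summand in $\widehat\mu_j(x)$, I invoke the ellipticity hypothesis \eqref{eqHypothesesSinai}: $|V(x)-V(x-1)|=|\log\rho_x|\le\log((1-\e_0)/\e_0)$, so
$$
V(b_j)-V(x-1)\;<\;-(C_2-2\tilde C_1)\log^{(2)}n+\log\tfrac{1-\e_0}{\e_0}.
$$
Combining, for every $x\in B$ and $n$ large,
$$
\widehat\mu_j(x)\;\le\;\bigl(1+\tfrac{1-\e_0}{\e_0}\bigr)(\log n)^{-(C_2-2\tilde C_1)}\;=\;\e_0^{-1}(\log n)^{-(C_2-2\tilde C_1)}.
$$

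Finally I would sum: $|B|\le M_j^+-M_j^-\le M_3(W,h_n)-b_{-4}(W,h_n)\le 2(\log n)^\alpha$ by \eqref{InegHypothesesMiPetits}, so
$$
\sum_{x\in B}\widehat\mu_j(x)\;\le\;\frac{2}{\e_0}(\log n)^{\alpha-(C_2-2\tilde C_1)}\;\le\;(\log n)^{-2}
$$
for $n$ large, since $C_2\ge 2\alpha+2+10\tilde C_1$ gives $\alpha-(C_2-2\tilde C_1)\le -\alpha-2-8\tilde C_1\le -2$. The only genuinely delicate point is the case analysis in the first paragraph: one has to notice that the complement of $\Xi_n(W)$ inside $[M_j^-,M_j^+]$ is controlled by $W(b_j(W,h_n))+C_2\log^{(2)}n$ (not by the deeper neighbouring bottoms $b_{j\pm1}$), and here this works precisely because the definitions \eqref{eqDefMj+}--\eqref{eqDefMj-} of $M_j^\pm$ match the threshold used to define $\Xi_{n,j}(W)$.
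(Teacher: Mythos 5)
Your overall strategy—reversibility via \eqref{InegInvariantMeasureInduction}, a pointwise lower bound on $V$ over $[M_j^-,M_j^+]\setminus\Xi_n(W)$ obtained by transferring a $W$-bound through the KMT coupling, ellipticity to handle $V(x-1)$, and a sum over the $O((\log n)^\alpha)$ integers in the range—is exactly the paper's proof, and your final exponent count ($\alpha-C_2+2\tilde C_1\le -2$) is correct.

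There is, however, one small gap: the claim that the endpoints $M_j^\pm$ ``lie in $\Xi_n(W)$ by construction'' is not justified. Take $x=M_j^+$ with $M_j^+<b_{j+1}$. Then indeed $W(M_j^+)\le W(b_j(W,h_n))+C_2\log^{(2)}n$, but typically $M_j^+>M_j(W,h_n)$ (since $M_j=\lfloor M_j(W,h_n)\rfloor$ is essentially the $h_n$-maximum, so $W(M_j)$ is large and $M_j^+\ge M_j+1$), hence $M_j^+\notin\Xi_{n,j}(W)$; and $M_j^+\in\Xi_{n,j+1}(W)$ would require the \emph{different} threshold $W(M_j^+)\le W(b_{j+1}(W,h_n))+C_2\log^{(2)}n$, which need not hold when $W(b_{j+1}(W,h_n))<W(b_j(W,h_n))$ (and when $j=2$, $\Xi_{n,3}(W)$ is not even part of $\Xi_n(W)$). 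So $M_j^+$ can belong to $B$, and for it your key inequality $W(x)>W(b_j(W,h_n))+C_2\log^{(2)}n$ fails. The paper plugs this hole with the same ellipticity tool you already use, but applied one more time: \eqref{eqEp15} gives the $W$-lower bound at $M_j^+-1\in[M_j,M_j^+[$ (and at $M_j^-+1$), and then $|V(M_j^\pm)-V(M_j^\pm\mp1)|\le\log\frac{1-\e_0}{\e_0}$ gives $V(M_j^\pm)\ge W(b_j(W,h_n))+(C_2-\tilde C_1)\log^{(2)}n-\log\e_0^{-1}$. This only costs an extra factor $\e_0^{-1}$ in the final constant (the paper gets $2\e_0^{-2}(\log n)^{\alpha+2\tilde C_1-C_2}$ instead of your $2\e_0^{-1}(\log n)^{\alpha+2\tilde C_1-C_2}$), which is harmless since there is plenty of room in the exponent. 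With that correction your proof matches the paper's.
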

\begin{proof}
We claim that due to \eqref{KMT1} and \eqref{InegHypothesesMiPetits},
$V(x)\geq W(b_j(W,h_n))+C_2\log^{(2)} n-\tilde C_1\log^{(2)} n-\log\e_0^{-1}$ for every
integer $x\in\big(\big[M_{j}^-, M_j^+\big]\setminus \Xi_{n,j}(W)\big)$ for $j\in\{-2,\dots, 2\}$. This follows from
the definition of $\Xi_{n,j}(W)$ if $x\in[M_{j-1}, M_j]$,
and
from \eqref{eqEp15} and the fact that
$|V(y)-V(y-1)|\leq \log \frac{1-\e_0}{\e_0}$ otherwise.
So, due to \eqref{InegInvariantMeasureInduction}, \eqref{InegHypothesesMiPetits} and \eqref{KMT1},
for large $n$, for all $\omega\in\mathcal{G}_n$ and $j\in\{-2,\dots,2\}$,
\begin{eqnarray*}
&&
    \sup_{k\geq 0}
    P_\omega^{b_{j}}\big(Z_k\in[M_{j}^-, M_j^+]\setminus \Xi_{n}(W)   \big)
\nonumber\\
& \leq &
   \sum_{x=M^-_{j}}^{M_{ j}^+}
    1_{\Xi_{n}(W)^c}(x)\widehat \mu_j(x)
\leq
   \sum_{x=M^-_{j}}^{M_{j}^+}
    1_{\Xi_{n,j}(W)^c}(x)
    e^{V(b_{j})}\big[e^{-V(x)}
    +e^{-V(x)+\log\frac{1-\e_0}{\e_0}}\big]
\nonumber\\
& \leq &
    2(\log n)^\alpha \e_0^{-1} e^{\big(W(b_j(W,h_n))+\tilde C_1\log^{(2)} n\big)
        -\big(W(b_j(W,h_n))+(C_2-\tilde C_1)\log^{(2)} n-\log\e_0^{-1}\big)}
\nonumber\\
& = &
    2\e_0^{-2}(\log n)^{\alpha+2\tilde C_1-C_2}
\leq
    (\log n)^{-2},
\end{eqnarray*}
since $C_2\geq 2\alpha+2+10\,\tilde C_1$.
\end{proof}
In the next lemma, we consider the case where $Z$ goes quickly in a deep valley.
\begin{lem}[Simplest case]\label{lemE2}
There exists $n_1\in\N$ such that for all $n\ge n_1$,
$$
    \forall\omega\in \mathcal G_n^+,
\qquad
    P_\omega(E_1,E_2,Z_n\notin\Xi_n(\omega))\le 3(\log n)^{-2}.$$
\end{lem}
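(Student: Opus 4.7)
The plan is to decompose the walk at $\tau_1 := \tau(b_{\mathcal{I}_1})$ and apply the strong Markov property. On $E_1$ we have $\tau_1 \leq \alpha_n$, so the residual duration $m := n - \tau_1$ satisfies $m \leq n$. Since $\{\mathcal{I}_1 = j\}$ for $j \in \{-1,0\}$ is measurable with respect to $(Z_0,\dots,Z_{\tau_1})$ and, conditionally on $\{\mathcal{I}_1=j\}$, the event $E_2$ coincides with the deterministic (in $\omega$) event $E_2^+(j)\cap E_2^-(j)$, the strong Markov property at $\tau_1$ reduces Lemma \ref{lemE2} to showing that, for $\omega \in \mathcal G_n^+$ and each $j \in \{-1,0\}$ such that $E_2^+(j) \cap E_2^-(j)$ holds, and uniformly in $m \leq n$,
\begin{equation*}
    P_\omega^{b_j}\bigl[Z_m \notin \Xi_n(W)\bigr] \leq 3(\log n)^{-2}.
\end{equation*}

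To estimate this probability, I would split according to whether the walker has escaped the deep valley around $b_j$ by time $m$:
\begin{equation*}
    P_\omega^{b_j}\bigl[Z_m \notin \Xi_n(W)\bigr]
\leq
    P_\omega^{b_j}\bigl[\tau(M_j^+) < m\bigr]
    + P_\omega^{b_j}\bigl[\tau(M_j^-) < m\bigr]
    + P_\omega^{b_j}\bigl[Z_m \in [M_j^-, M_j^+] \setminus \Xi_n(W)\bigr].
\end{equation*}
The third term is at most $(\log n)^{-2}$ directly by \eqref{eq41Amelioree}, which handles the case where the walker is still trapped at time $m$ but happens to sit in a high-potential site inside the trap.

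For the escape terms, the key observation is that any path from $b_j$ to $M_j^+$ must pass through the peak $M_j$, since the height condition $E_2^+(j)$ forces $M_j^+ > M_j$; hence $P_\omega^{b_j}[\tau(M_j^+) < m] \leq P_\omega^{b_j}[\tau(M_j) < n]$. I would then apply \eqref{InegProba1} with $b=b_j$, $c=M_j$, $k=n$, bound $\min_{\ell \in [b_j, M_j - 1]} V(\ell) \leq V(b_j)$, transfer from $V$ to $W$ via \eqref{KMT1} (with a $O(\log^{(2)} n)$ slack absorbing both the KMT error and the unit-scale Brownian oscillation), and invoke the height bound $W[M_j(W,h_n)] - W[b_j(W,h_n)] > \log n + C_2 \log^{(2)} n$ from $E_2^+(j)$. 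The exponent in \eqref{InegProba1} is then bounded above by $-\log n - (C_2 - O(1)) \log^{(2)} n$, so that $n \exp[\cdot] \leq (\log n)^{-C_2+O(1)} \leq (\log n)^{-2}$ for $n$ large, since $C_2 \geq 2\alpha + 2 + 10\tilde C_1$. The symmetric argument for the left barrier via \eqref{InegProba2} and $E_2^-(j)$ yields the same bound on the second term.

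The main obstacle, though largely bookkeeping, is tracking the several $O(\log^{(2)} n)$ discrepancies between $W$, its integer-floor sampling $W(\lfloor\cdot\rfloor)$ and the potential $V$, together with the rounding of $b_j(W,h_n), M_j(W,h_n)$ to $b_j,M_j$; these are precisely what the $C_2 \log^{(2)} n$ margin built into the height condition defining $E_2$ is designed to absorb, which is why the third term requires the $\log^{(2)} n$-neighborhood of the valley bottoms used in the definition of $\Xi_n(W)$ and $M_j^\pm$.
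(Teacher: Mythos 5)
Your proposal is correct and follows essentially the same route as the paper: decompose at $\tau(b_{\mathcal I_1})$ via the strong Markov property, bound the escape-from-valley probability on $E_2$ by the Golosov-type hitting-time estimates \eqref{InegProba1}--\eqref{InegProba2} (which is exactly what the paper's \eqref{PPPP1}--\eqref{PPPP2} carry out), and control the event that the walker is still trapped but sits at a high-potential site via the invariant-measure bound \eqref{eq41Amelioree}. The only cosmetic difference is that you phrase the escape in terms of $\tau(M_j^\pm)$ and then reduce to $\tau(M_j)$, $\tau(M_{j-1})$, whereas the paper works with $\tau(M_{\mathcal I_1})$, $\tau(M_{\mathcal I_1-1})$ directly; one minor inaccuracy is that $M_j^+>M_j$ holds unconditionally (for $n$ large) rather than being a consequence of $E_2^+(j)$, but this does not affect the argument.
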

\begin{proof}
Due to \eqref{InegProba1}, \eqref{InegHypothesesMiPetits} and
\eqref{KMT1}, we have for large $n$,
for all $\omega\in \mathcal G_n^+$ and all  $j\in\{-2,\dots,2\}$ uniformly on  $E_2^+(j)$,
\begin{multline}
\label{PPPP1}
    P_\omega^{b_{j}}\left[\tau(M_{j})<n\right]
\leq
    n e^{\min_{[b_{j}, M_{j}-1]}V-V(M_{j}-1)}
\leq
    n e^{V(b_{j})-V(M_{j}-1)}
\\
\leq
    n \exp\big[\big(W(b_{j}(W,h_n))+\tilde C_1\log^{(2)} n\big)
               -
               \big(W(M_{j}(W,h_n))-\tilde C_1\log^{(2)} n-\log\e_0^{-1}\big)
          \big]
\\
\le
    n e^{-(\log n+C_2\log^{(2)} n)+2\tilde C_1\log_ 2 n+\log \varepsilon_0^{-1}}
=
    \e_0^{-1}(\log n)^{2\tilde C_1-C_2}
\le
    (\log n)^{-2},
\end{multline}
since $C_2\geq 2\alpha+2+10\,\tilde C_1$.
Similarly, using \eqref{InegProba2} instead of \eqref{InegProba1}, we have for large $n$,
for all $\omega\in \mathcal G_n^+$ and all $j\in\{-2,\dots,2\}$, uniformly on $E_2^-(j)$,
\begin{equation}\label{PPPP2}
    P_\omega^{b_{ j}}\left[\tau(M_{j-1})<n\right]
\le
    (\log n)^{-2}.
\end{equation}
Let
$$
    \tau(x,y)
:=
    \inf\{k\geq 0,\ Z_{\tau(x)+k}=y\},
\qquad
    x\in\Z,\, y\in\Z.
$$
In particular, on $E_1\cap E_2\cap\big\{    \tau\big(b_{\mathcal I_1}, M_{\mathcal I_1-1}\big) \geq n\big\}
\cap\big\{\tau\big(b_{\mathcal I_1}, M_{\mathcal I_1}\big) \geq n\big\}$, recalling \eqref{alphandef},
$$
    \tau\big(b_{\mathcal I_1}\big)
\leq
    \alpha_n
\leq
    n
\leq
    \tau\big(b_{\mathcal I_1}\big)
    +\tau\big(b_{\mathcal I_1}, M_{\mathcal I_1-1}\big)
    \wedge
    \tau\big(b_{\mathcal I_1}, M_{\mathcal I_1}\big),
$$
and so $Z_n\in \big[M_{\mathcal I_1-1}, M_{\mathcal I_1}\big]\subset\big[M_{\mathcal I_1}^-, M_{\mathcal I_1}^+\big]$.
Applying  \eqref{PPPP1} and \eqref{PPPP2} combined with the strong Markov property at time $\tau(b_{\mathcal I_1})$,
and then \eqref{eq41Amelioree}, we get for large $n$, for every $\omega\in\mathcal{G}_n^+$,
\begin{eqnarray}
&&
    P_\omega[E_1,E_2, Z_n\notin \Xi_n(W)]
\nonumber\\
& \leq &
    P_\omega\big[E_1,E_2, Z_n\notin \Xi_n(W), \tau\big(b_{\mathcal I_1}, M_{\mathcal I_1-1}\big) \geq n,
                    \tau\big(b_{\mathcal I_1}, M_{\mathcal I_1}\big) \geq n\big]
\nonumber\\
&&
    +P_\omega\big[E_2, \tau\big(b_{\mathcal I_1}, M_{\mathcal I_1-1}\big) < n\big]
    +P_\omega\big[E_2, \tau\big(b_{\mathcal I_1}, M_{\mathcal I_1}\big) < n\big]
\nonumber\\
& \leq &
    E_\omega\big[1_{E_1}
    P_\omega^{b_{\mathcal I_1}}\big(Z_{n-k}\in [M_{\mathcal I_1-1}, M_{\mathcal I_1}]
                            \setminus\Xi_{n}(W)\big)_{|k=\tau(b_{\mathcal I_1})}\big]
    +2(\log n)^{-2}
\nonumber\\
& \leq &
    3(\log n)^{-2}.
\label{eqConclusionE2}
\end{eqnarray}
This proves the lemma.
\end{proof}
For the event $E_2^c$, we will use the following lemma, which is actually true for any Markov chain.
\begin{lem}\label{LemmaProbaHittingTime}
Let $a\ne b$. We have,
$$
    \forall k\in\N,
\qquad
    P_\omega^b[\tau(a)=k]
\leq
    P_\omega^b[\tau(a)<\tau(b)].
$$
\end{lem}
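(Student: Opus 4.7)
The plan is to decompose the event $\{\tau(a)=k\}$ according to the last visit to $b$ strictly before time $k$, and then use the Markov property at that (deterministic, once conditioned on) time.

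More precisely, for $k\ge 1$ and $0\le j\le k-1$, let $F_j$ be the event $\{Z_j=b,\ \tau(a)>j\}$ (the chain has not hit $a$ by time $j$, and is back at $b$), and let $G_{k-j}$ be the event $\{\tau(a)=k-j,\ \tau(a)<\tau(b)\}$ (starting afresh from $b$, the chain hits $a$ for the first time at time $k-j$ and does so before returning to $b$). Since $Z_0=b\ne a$, on $\{\tau(a)=k\}$ the last time $L:=\max\{i<k:\, Z_i=b\}$ is well-defined and lies in $\{0,1,\ldots,k-1\}$, and on $\{L=j,\,\tau(a)=k\}$ the trajectory is exactly an element of $F_j$ concatenated with a trajectory (shifted by $j$) realizing $G_{k-j}$. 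Hence by the Markov property at time $j$,
\begin{equation*}
P_\omega^b[\tau(a)=k,\ L=j]\;=\;P_\omega^b[F_j]\cdot P_\omega^b[G_{k-j}].
\end{equation*}

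Summing over $j$ and using the trivial bound $P_\omega^b[F_j]\le 1$,
\begin{equation*}
P_\omega^b[\tau(a)=k]\;=\;\sum_{j=0}^{k-1}P_\omega^b[F_j]\,P_\omega^b[G_{k-j}]\;\le\;\sum_{j=0}^{k-1}P_\omega^b[G_{k-j}]\;=\;\sum_{m=1}^{k}P_\omega^b[\tau(a)=m,\ \tau(a)<\tau(b)].
\end{equation*}
The right-hand side is bounded above by $\sum_{m=1}^{\infty}P_\omega^b[\tau(a)=m,\ \tau(a)<\tau(b)]=P_\omega^b[\tau(a)<\tau(b)]$, which yields the claimed inequality.

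There is no real obstacle here: the only point requiring a line of care is checking that on $\{L=j,\,\tau(a)=k\}$ the post-$j$ increment does belong to $G_{k-j}$, i.e.\ that $\tau(a)<\tau(b)$ for the shifted chain. This is automatic, since $L=j$ forces no return to $b$ during $(j,k)$, while $\tau(a)=k$ forces the shifted hitting time of $a$ to equal exactly $k-j$; as $a\ne b$, these two facts together give $\tau(a)<\tau(b)$ for the shifted chain. The argument uses only the Markov property and no specific feature of the RWRE, so the statement holds for any Markov chain, as claimed.
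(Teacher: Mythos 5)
Your proof is correct and follows essentially the same route as the paper's: both decompose $\{\tau(a)=k\}$ according to the last visit to $b$ before time $k$, apply the Markov property at that time, bound the probability of the "pre" part by $1$, and sum. The only cosmetic difference is that the paper bounds the pre-$j$ factor by $P_\omega^b[Z_n=b]$ before bounding by $1$, and you are a bit more explicit about why the shifted chain realizes $\{\tau(a)<\tau(b)\}$.
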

\begin{proof}
Let $k\in\N^*$. We have, by the Markov property,
\begin{eqnarray*}
    P_\omega^b[\tau(a)=k]
& = &
    \sum_{n=0}^k P_\omega^b\big[\tau(a)=k, Z_n=b, \forall n<\ell \leq k, Z_\ell\neq b\big]
\\
& \leq &
    \sum_{n=0}^k P_\omega^b\big[Z_n=b\big]P_\omega^b\big[\tau(a)=k-n,  \tau(a)<\tau(b)\big]
\\
& \leq &
    P_\omega^b[\tau(a)\in[0,k],  \tau(a)<\tau(b)]
\\
& \leq &
    P_\omega^b[\tau(a)<\tau(b)],
\end{eqnarray*}
where we used $P_\omega^b[Z_n=b]\leq 1$ in the second inequality.
\end{proof}
\begin{lem}[Most difficult case]\label{lemE2compl}
There exists $n'_1\in\N$ such that for all $n\ge n'_1$,
$$
    \forall \omega\in \mathcal G_n^+,
\quad
    P_\omega(E_1,E_2^c,Z_n\notin\Xi_n(\omega))\le 8 (\log n)^{-2}.$$
\end{lem}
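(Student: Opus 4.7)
The plan is to localize $Z$ after time $\tau(b_{\mathcal I_1})$ either near the bottom $b_{\mathcal I_1}$ of its first valley or, after crossing the low peak $M_{\mathcal I_1}$, near the bottom $b_{\mathcal I_2}:=b_{\mathcal I_1\pm 1}$ of the adjacent valley. Working on $\mathcal G_n^+\cap E_2^+(\mathcal I_1)^c$ (the sub-case $E_2^-(\mathcal I_1)^c$ and the half $\mathcal G_n^-$ being symmetric), I set $\mathcal I_2:=\mathcal I_1+1$. Property \eqref{eqDefGn1} allows at most one among the eleven surrounding slopes to have height at most $\log n+C_2\log^{(2)} n$; since the slope from $b_{\mathcal I_1}$ up to $M_{\mathcal I_1}$ is already such a slope (by $E_2^+(\mathcal I_1)^c$), all other surrounding slopes are deep. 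In particular $E_2^-(\mathcal I_1)$, $E_2^+(\mathcal I_2)$, $E_2^-(\mathcal I_2)$ all hold and the slope from $M_{\mathcal I_1}$ down to $b_{\mathcal I_2}$ has height exceeding $\log n+C_2\log^{(2)} n$.

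I would then split the probability according to $\sigma:=\inf\{t\ge\tau(b_{\mathcal I_1}):\, Z_t=b_{\mathcal I_2}\}$ being at most $n$ or not. On $\{\sigma\le n\}$, the strong Markov property at $\sigma$ reduces the estimate to bounding $\sup_{k\le n}P^{b_{\mathcal I_2}}_\omega[Z_k\notin \Xi_n(W)]$, which is handled exactly as in Lemma~\ref{lemE2} with $b_{\mathcal I_2}$ in place of $b_{\mathcal I_1}$: the inequalities \eqref{PPPP1}, \eqref{PPPP2} (now applied with $j=\mathcal I_2$, using $E_2^\pm(\mathcal I_2)$) and \eqref{eq41Amelioree} yield a contribution of $3(\log n)^{-2}$, mirroring \eqref{eqConclusionE2}.

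On $\{\sigma>n\}$, strong Markov at $\tau(b_{\mathcal I_1})$ reduces the task to bounding $P^{b_{\mathcal I_1}}_\omega[Z_k\notin \Xi_n(W),\, \tau(b_{\mathcal I_2})>k]$ for $k=n-\tau(b_{\mathcal I_1})\le n$. I decompose by the position of $Z_k$: (a) $Z_k<M_{\mathcal I_1-1}$, bounded by $P^{b_{\mathcal I_1}}_\omega[\tau(M_{\mathcal I_1-1})<n]\le (\log n)^{-2}$ via \eqref{InegProba2} and the deepness $E_2^-(\mathcal I_1)$; (b) $Z_k\in[M_{\mathcal I_1-1},M_{\mathcal I_1}]\subset[M_{\mathcal I_1}^-,M_{\mathcal I_1}^+]$, controlled directly by \eqref{eq41Amelioree}; and (c) $Z_k\in(M_{\mathcal I_1},b_{\mathcal I_2})$. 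Subcases (a)--(b) each contribute $\le (\log n)^{-2}$. For the delicate subcase (c), the naive bound $P^{b_{\mathcal I_1}}_\omega[Z_k=x]\le \widehat\mu_{\mathcal I_1}(x)$ is too weak, since $V(x)$ may be well below $V(b_{\mathcal I_1})$ in the descent. Since reaching any such $x$ requires first hitting $M_{\mathcal I_1}$, I would apply Lemma~\ref{LemmaProbaHittingTime} with $b=b_{\mathcal I_1}$ and $a=M_{\mathcal I_1}$ to obtain, uniformly in $\ell$,
\[
P^{b_{\mathcal I_1}}_\omega[\tau(M_{\mathcal I_1})=\ell]\le P^{b_{\mathcal I_1}}_\omega[\tau(M_{\mathcal I_1})<\tau(b_{\mathcal I_1})]\le \e_0^{-1}e^{V(b_{\mathcal I_1})-V(M_{\mathcal I_1})},
\]
the last inequality coming from \eqref{probaatteinte} applied to the interval $[b_{\mathcal I_1},M_{\mathcal I_1}]$. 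Using \eqref{KMT1} and the height inequality $W(M_{\mathcal I_1})-W(b_{\mathcal I_1})\ge h_n$, this is $O((\log n)^{O(1)}/n)$. Decomposing the event $\{Z_k=x,\,\tau(b_{\mathcal I_2})>k\}$ at $\tau(M_{\mathcal I_1})$, applying this uniform bound, then using \eqref{probaatteinte} on $[M_{\mathcal I_1},b_{\mathcal I_2}]$ to control the excursions from $M_{\mathcal I_1}$ that return to $b_{\mathcal I_1}$ without reaching $b_{\mathcal I_2}$, summing in $x\in(M_{\mathcal I_1},b_{\mathcal I_2})$ and using $(b_{\mathcal I_2}-M_{\mathcal I_1})\le 2(\log n)^\alpha$ from \eqref{InegHypothesesMiPetits} yields a total contribution of order $(\log n)^{O(1)}/n$, which is $\le (\log n)^{-2}$ for $n$ large.

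Combining the bounds from $\{\sigma\le n\}$, subcases (a)--(c), and the symmetric sub-case $E_2^-(\mathcal I_1)^c$ produces the claimed bound $8(\log n)^{-2}$. The main obstacle is subcase (c): since the descent from $M_{\mathcal I_1}$ to $b_{\mathcal I_2}$ is accessible to the walk, the simple invariant-measure bound centred at $b_{\mathcal I_1}$ fails to exploit the fact that $b_{\mathcal I_2}$ is not yet reached, and only Lemma~\ref{LemmaProbaHittingTime} (combined with \eqref{probaatteinte}) extracts the decisive exponential factor $e^{-(W(M_{\mathcal I_1})-W(b_{\mathcal I_1}))}\le (\log n)^{5C_2}/n$ needed to beat the length $(\log n)^{O(1)}$ of the descent.
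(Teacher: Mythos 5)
Your decomposition is genuinely different from the paper's: you split at $\sigma:=\inf\{t\ge\tau(b_{\mathcal I_1}):\,Z_t=b_{\mathcal I_2}\}$ and then, on $\{\sigma>n\}$, on the location of $Z_n$, whereas the paper tracks the first passage to the checkpoint $A=M_{\mathcal I_1}^+$ partway down the descent (the first point after $M_{\mathcal I_1}$ where $W$ returns below $W(b_{\mathcal I_1}(W,h_n))+C_2\log^{(2)}n$) and introduces the thin ``bad window'' $E_4=\{\tau(b_{\mathcal I_1})+\tau(b_{\mathcal I_1},A)\in[n-2n(\log n)^{-6C_2},n]\}$ to dispose of the possibility that $Z_n$ sits on the descent at time $n$. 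Your $\{\sigma\le n\}$ case and subcases (a), (b) are sound and parallel the paper's controls on $E_3$, $E_5$, $E_6$.

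Subcase (c) has a genuine gap, and the claimed rate $O((\log n)^{O(1)}/n)$ is incorrect. After decomposing at $\tau(M_{\mathcal I_1})$ and applying the uniform bound $P^{b_{\mathcal I_1}}_\omega[\tau(M_{\mathcal I_1})=\ell]\le p_0=O\big((\log n)^{5C_2+2\tilde C_1}/n\big)$, what remains after summing over $x$ and $\ell$ is, up to constants, $p_0$ times the expected sojourn time of $Z$ in $(M_{\mathcal I_1},b_{\mathcal I_2})$ before hitting $b_{\mathcal I_2}$, started from $M_{\mathcal I_1}$. This sojourn time is \emph{not} a power of $\log n$: the descending $h_n$-slope from $M_{\mathcal I_1}$ to $b_{\mathcal I_2}$ may contain sub-valleys of depth up to roughly $h_n-2C_2\log^{(2)}n$, in which the walk lingers for about $n(\log n)^{-7C_2+O(1)}$ steps. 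The ingredient you are missing, which the paper uses in its $E_7$ control, is that \eqref{eqDefGn1} forces the absence of $(h_n-2C_2\log^{(2)}n)$-extrema between $M_{\mathcal I_1}(W,h_n)$ and $b_{\mathcal I_2}(W,h_n)$, hence
$\sup_{M_{\mathcal I_1}\le u\le v\le b_{\mathcal I_2}}(W(v)-W(u))<h_n-2C_2\log^{(2)}n$;
combined with \eqref{KMT1} and \eqref{InegEsperance1} this caps the sojourn time by $O\big(n\,(\log n)^{-2C_2+O(1)}\big)$, whose product with $p_0$ is $O\big((\log n)^{-2C_2+O(1)}\big)$ --- the two factors of $n$ cancel, and the result is $\le(\log n)^{-2}$ only because of the choice $C_2\ge 2\alpha+2+10\,\tilde C_1$. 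Without this potential-increase bound (or, as in the paper, the bad-window event $E_4$ combined with $E_7$), your bound in subcase (c) does not follow, and even with it the quantity is a negative power of $\log n$ rather than $(\log n)^{O(1)}/n$.
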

\begin{proof}
An essential remark is that if we are on $E_2^c$ with $\omega\in\mathcal G_n^+$, then, due to \eqref{eqDefGn1}, either we are on $E_2^-(\mathcal I_1)\setminus E_2^+(\mathcal I_1)$ or on $E_2^+(\mathcal I_1)\setminus E_2^-(\mathcal I_1)$.
In the first case we set
$$
    \mathcal I_2:=\mathcal I_1+1,\
    A:=M_{\mathcal I_1}^+,\
    B:=M_{\mathcal I_1}(W,h_n)\mbox{ and }
    D:=M_{\mathcal I_1}^-.
$$
whereas in the second case we set
$$
    \mathcal I_2:=\mathcal I_1-1,\
    A:=M_{\mathcal I_1}^-,\
    B:=M_{\mathcal I_1-1}(W,h_n)\mbox{ and }
    D:=M_{\mathcal I_1}^+.
$$
Loosely speaking, with large probability, $b_{\mathcal I_2}$ is the bottom of the second valley reached by $Z$,
and $Z$ can reach it before time $n$ or not, so we have to consider both cases.

We introduce
$
    \tau'(A, b_{\mathcal I_2})
:=
    \inf\{k\geq 0,\ Z_{\tau(b_{\mathcal I_1}) +\tau(b_{\mathcal I_1},A)+k}=b_{\mathcal I_2}\}
$
and
\begin{eqnarray*}
    E_3
& := &
    \{\tau(b_{\mathcal I_1})+\tau(b_{\mathcal I_1},A)<n-2n(\log n)^{-6C_2}\}
    \cap\{\tau'(A, b_{\mathcal I_2}) \leq n(\log n)^{-6C_2}\},
\\
    E_4
& := &
    \{\tau(b_{\mathcal I_1})+\tau(b_{\mathcal I_1},A)\in[n-2n(\log n)^{-6C_2},n]\},
\\
    E_5
& := &
    \{\tau(b_{\mathcal I_1})+\tau(b_{\mathcal I_1},A)>n\}\cap \{\tau(b_{\mathcal I_1}, D)>n\},
\\
    E_6
& := &
    \{\tau(b_{\mathcal I_1}, D)\leq n\},
  \\
    E_7
& := &
    \{\tau'(A, b_{\mathcal I_2}) \geq n(\log n)^{-6C_2}\}.
\end{eqnarray*}
Notice that
\begin{equation}\label{eqE2Union}
    E_2^c
\subset
    E_3\cup E_4\cup E_5\cup E_6\cup E_7.
\end{equation}
\begin{itemize}
\item \underline{Control on $E_6$}.
First, $E_2^c\cap\{\mathcal I_2=\mathcal I_1+1\}\subset E_2^-(\mathcal I_1)$, so by \eqref{PPPP2}
and since $D=M_{\mathcal I_1}^-<M_{\mathcal I_1-1}<b_{\mathcal I_1}$ when $\mathcal I_2=\mathcal I_1+1$,
we have for large $n$ for every $\omega\in\mathcal{G}_n^+$,
\begin{equation}\label{eqProbaE2E6}
    P_\omega(E_2^c\cap\{\mathcal I_2=\mathcal I_1+1\}\cap E_6)
\leq
    E_\omega\big[1_{E_2^-(\mathcal I_1)}P_\omega^{b_{\mathcal I_1}}\big(\tau(M_{\mathcal I_1-1})<\tau(D)\leq n\big)\big]
\leq
    (\log n)^{-2}.
\end{equation}
The case $\mathcal I_1=\mathcal I_2-1$ follows similarly from \eqref{PPPP1}, and so
$$
    P_\omega(E_2^c\cap E_6)
\leq
    2(\log n)^{-2}
$$
for large $n$ for every $\omega\in\mathcal{G}_n^+$.
\item \underline{Control on $E_4$}. We start by proving that
for every $n$ large enough, for every $\omega\in\mathcal G_n^+$, uniformly on $E_2^c$,
\begin{equation}\label{eqpourE4}
    \forall x\in\N,
\qquad
    P_{\omega}^{b_{\mathcal I_1}}\left[\tau(A)\in[n-2n(\log n)^{-6C_2}-x,n-x]\right]
 \leq
    (\log n)^{-2}.
\end{equation}
Using Lemma \ref{LemmaProbaHittingTime} and then \eqref{probaatteinte}, we obtain on $E_2^-(\mathcal I_1)\setminus E_2^+(\mathcal I_1)$, since $b_{\mathcal I_1}<M_{\mathcal I_1}<A$,
\begin{eqnarray*}
   P_{\omega}^{b_{\mathcal I_1}}
    \big[\tau(A)=\ell\big]
& \leq &
    P_{\omega}^{b_{\mathcal I_1}}\big[\tau(A) < \tau(b_{\mathcal I_1})\big]
=
   \omega_{b_{\mathcal I_1}}P_{\omega}^{b_{\mathcal I_1}+1} \big[\tau(A) < \tau(b_{\mathcal I_1})\big]
\\
& \leq &
    e^{V(b_{\mathcal I_1})-V(M_{\mathcal I_1})}
\leq
    e^{W[b_{\mathcal I_1}(W,h_n)]- W[M_{\mathcal I_1}(W,h_n)]+2\tilde C_1\log^{(2)} n}
\\
& \leq &
    e^{-h_n+2\tilde C_1\log^{(2)} n}
=
    (\log n)^{5C_2+2\tilde C_1}/n
\leq
    (\log n)^{-2}
    /(3n(\log n)^{-6C_2})\, ,
\end{eqnarray*}
for every $\ell\in \mathbb N$ and $\omega\in\mathcal{G}_n^+$ for every $n$ large enough, since $C_2\geq 2\alpha+2+10\,\tilde C_1$.
Summing over $\ell$ proves \eqref{eqpourE4} in this case, the other case $E_2^+(\mathcal I_1)\setminus E_2^-(\mathcal I_1)$ being very similar.

Due to \eqref{eqpourE4},
for large $n$, for every $\omega\in\mathcal{G}_n^+$,
by the strong Markov property,
\begin{eqnarray}
\nonumber
    P_\omega(E_2^c\cap E_4)
& = &
    E_\omega\big[
                1_{E_2^c}
                P_\omega^{b_{\mathcal I_1}}\big(\tau(A)\in[n-2n(\log n)^{-6C_2}-x,n-x]
                \big)_{|x=\tau(b_{\mathcal I_1})}\big]
\\
& \leq &
    (\log n)^{-2}.
\label{eqProbaE4}
\end{eqnarray}
\item  \underline{Control on $E_7$}.
Let us prove that for $n$ large enough,
\begin{equation}\label{eqE7}
    \forall\omega\in\mathcal{G}_n^+,
\qquad
    P_{\omega}(E_2^c\cap E_7)
\leq
    (\log n)^{-2}.
\end{equation}
Due to the strong Markov property, it is enough to prove that for large $n$, for every $\omega\in\mathcal{G}_n^+$, uniformly on $E_2^c$,
\begin{equation}\label{eqLemmaTempsAtteintebI2}
    P_\omega^A\left[\tau(b_{\mathcal I_2})\ge n(\log n)^{-6C_2}\right]
\leq
    (\log n)^{-2}.
\end{equation}
Recall that $h_n$-extrema are a fortiori $(h_n-2C_2\log^{(2)} n)$-extrema.
Let us observe that due to \eqref{eqDefGn1} and since $W(B)-W(b_{\mathcal I_1}(W,h_n))\leq \log n+C_2\log^{(2)} n$,
the only possible slope $T_j[W, h_n-2C_2\log^{(2)} n]$, $-5\leq j\leq 5$ with height $\leq \log n+C_2\log^{(2)} n$
is $[B, b_{\mathcal I_1}(W,h_n)]$ (or $[b_{\mathcal I_1}(W,h_n), B]$)
so $W(B)-W(b_{\mathcal I_2}(W,h_n))>\log n+C_2\log^{(2)} n$.
For the same reason,  there is no
$(h_n-2C_2\log^{(2)} n)$-extrema between $B$ and $b_{\mathcal I_2}(W,h_n)$,
and so $\sup_{B\leq u \leq v\leq b_{\mathcal I_2}(W,h_n)}(W(v)-W(u))<h_n-2C_2\log^{(2)} n$ in the case $\mathcal I_2=\mathcal I_1+1$.
Hence in this case, due to \eqref{InegEsperance1}, \eqref{InegHypothesesMiPetits}, \eqref{KMT1} and to Markov's inequality,
and since $\lfloor B\rfloor=M_{\mathcal I_1}<A<b_{\mathcal I_2}$,
\begin{eqnarray}
P_\omega^{A}\left[\tau(\lfloor B\rfloor) \wedge \tau(b_{\mathcal I_2})
                    \geq n(\log n)^{-6C_2}
            \right]
&\leq&
    \frac{\varepsilon_0^{-1}4(\log n)^{2\alpha}
    e^{h_n-2C_2\log^{(2)}n+2\tilde C_1\log^{(2)} n}}{n(\log n)^{-6C_2}}
\nonumber\\
&\leq&
    \varepsilon_0^{-1}4(\log n)^{2\alpha-C_2+2\tilde C_1}\le
    \frac 12(\log n)^{-2}\, ,
\label{InegTempsAtteinteBbI2}
\end{eqnarray}
for every $n$ large enough since $C_2\geq 2\alpha+2+10\,\tilde C_1$.
This is also true in the case $\mathcal I_2=\mathcal I_1-1$ by \eqref{InegEsperance2}.
Moreover in the case $\mathcal I_2=\mathcal I_1+1$,
we have
$
    \max_{[A,b_{\mathcal I_2}]}V
\leq
    \sup_{[M_{\mathcal I_1}^+, b_{\mathcal I_2}(W,h_n)]}W+\tilde C_1\log^{(2)} n
\leq
    W(M_{\mathcal I_1}^+)+(h_n-2C_2\log^{(2)} n)+\tilde C_1\log^{(2)} n
$
due to the previous remark, \eqref{InegHypothesesMiPetits} and \eqref{KMT1}. Also,
$
    W(M_{\mathcal I_1}^+)
\leq
    W(b_{\mathcal I_1}(W,h_n))+C_2\log^{(2)} n
$ by \eqref{eqDefMj+},
otherwise we would have $M_{\mathcal I_1}^+=b_{\mathcal I_1+1}$ and
$
    W(b_{\mathcal I_1+1})
\geq
    W(b_{\mathcal I_1}(W,h_n)) +C_2\log^{(2)} n
\geq
    W(M_{\mathcal I_1}(W,h_n))-\log n
$
due to our hypothesis in this case $\mathcal I_2=\mathcal I_1+1$,
which in turn would give
$
    W(M_{\mathcal I_1}(W,h_n))
-
    W(b_{\mathcal I_1+1}(W,h_n))
\leq
    \log n+2\tilde C_1\log^{(2)}n
$,
which contradicts \eqref{eqDefGn1} since $2\tilde C_1<C_2$.
So by \eqref{probaatteinte}, \eqref{InegHypothesesMiPetits} and \eqref{KMT1},
recalling that $W(b_{\mathcal I_1}(W,h_n))+h_n\leq W(M_{\mathcal I_1}(W,h_n))$ and $B=M_{\mathcal I_1}$, we get
\begin{eqnarray}
    P_\omega^{A}\left[\tau(\lfloor B\rfloor )<\tau(b_{\mathcal I_2})\right]
& \leq &
    \big(b_{\mathcal I_2}-A\big)
    \exp\Big[\max_{[A,b_{\mathcal I_2}]}V-V(B)\Big]
\nonumber\\
& \leq &
    2(\log n)^\alpha e^{\big(W(b_{\mathcal I_1}(W,h_n))+h_n+(\tilde C_1-C_2)\log^{(2)} n\big)
   -\big(W(M_{\mathcal I_1}(W,h_n))-\tilde C_1\log^{(2)} n\big)}
\nonumber\\
& \leq &
    2(\log n)^{\alpha+2\tilde C_1-C_2}
\le
    (\log n)^{-2}/2
\label{InegProbaAtteinteBbI2}
\end{eqnarray}
for every $n$ large enough
since $C_2\geq 2\alpha+2+10\,\tilde C_1$.
We prove similarly \eqref{InegProbaAtteinteBbI2} in the case $\mathcal I_2=\mathcal I_1-1$.
Then, \eqref{InegTempsAtteinteBbI2} and \eqref{InegProbaAtteinteBbI2}
prove \eqref{eqLemmaTempsAtteintebI2}.
Finally, \eqref{eqLemmaTempsAtteintebI2} combined with the strong Markov property lead to \eqref{eqE7}.

\item \underline{Control on $E_5$}.
On $E_1\cap E_2^c\cap E_5$, we have
$
    \tau(b_{\mathcal I_1})
\leq
    n
\leq
    \tau(b_{\mathcal I_1})+\tau(b_{\mathcal I_1}, A)\wedge\tau(b_{\mathcal I_1}, D)
$ and in particular
$Z_n\in[A\wedge D,A\vee D]= [M_{\mathcal I_1}^-, M_{\mathcal I_1}^+]$.
Applying \eqref{eq41Amelioree} as in the simplest case,
we get for large $n$,
for all $\omega\in\mathcal{G}_n$,
\begin{eqnarray}
&&
    P_\omega(E_1\cap E_2^c\cap E_5, Z_n\notin \Xi_n(W)   )
\nonumber\\
& \leq &
    E_\omega\big[1_{\{\tau(b_{\mathcal I_1})\leq \alpha_n\}}
    P_\omega^{b_{\mathcal I_1}}\big(Z_{n-k}\in [M_{\mathcal I_1}^-, M_{\mathcal I_1}^+]
                            \setminus \Xi_{n}(W)\big)_{|k=\tau(b_{\mathcal I_1})}\big]
\leq
    (\log n)^{-2}.
\label{eqConclusionE5}
\end{eqnarray}
\item \underline{Control on $E_3$}.
On $E_1\cap E_2^c\cap E_3$, we have
$
    \tau(b_{\mathcal I_2})
\leq
    \tau(b_{\mathcal I_1})+\tau(b_{\mathcal I_1}, A)+\tau'(A, b_{\mathcal I_2})
<n
$.
Moreover, the height of the valley $[M_{\mathcal I_2-1}(W,h_n), M_{\mathcal I_2}(W,h_n)]$
is at least $\log n+C_2\log^{(2)} n$ on $E_2^c$ due to \eqref{eqDefGn1}, that is, we are on $E_2^-(\mathcal I_2)\cap  E_2^+(\mathcal I_2)$.
Also we get
$
    P_\omega^{b_{\mathcal I_2}}
    \big[\tau\big(M_{\mathcal I_2-1}\big)\wedge\tau\big(M_{\mathcal I_2}\big)<n\big]
\leq
    2(\log n)^{-2}
$
by \eqref{PPPP1} and \eqref{PPPP2} uniformly on $E_2^c\cap \mathcal{G}_n^+$ for large $n$.
Using \eqref{eq41Amelioree} and $[M_{\mathcal I_2-1}, M_{\mathcal I_2}]\subset[M_{\mathcal I_2}^-, M_{\mathcal I_2}^+]$,
this gives for large $n$ for every $\omega\in\mathcal{G}_n^+$,
\begin{eqnarray}
&&
    P_\omega(E_1\cap E_2^c\cap E_3, Z_n\notin \Xi_n(W)   )
\nonumber\\
& \leq &
    E_\omega\big[1_{\{\tau(b_{\mathcal I_2})< n\}}
    P_\omega^{b_{\mathcal I_2}}\big(Z_{n-k}\in [M_{\mathcal I_2}^-, M_{\mathcal I_2}^+]
                            \setminus \Xi_{n}(W)\big)_{|k=\tau(b_{\mathcal I_2})}\big]
\nonumber\\
&&
    +
    E_\omega\big[1_{E_2^c}
    P_\omega^{b_{\mathcal I_2}}
    \big[\tau\big(M_{\mathcal I_2-1}\big)\wedge\tau\big(M_{\mathcal I_2}\big)<n\big]
    \big]
\nonumber\\
& \leq &
    3(\log n)^{-2}.
\label{eqConclusionE3}
\end{eqnarray}
\end{itemize}
Finally, \eqref{eqE2Union} and the controls on $E_i$, $3\leq i \leq 7$ prove Lemma \ref{lemE2compl},
which ends the proof of Proposition \ref{LemmaLocalisationVerticale}.
\end{proof}


\section{Probability of simultaneous meeting of independent recurrent rwre in the same environment}\label{sameenv}

This section is devoted to the proof of
Proposition \ref{Lemmeannexe1}, which is a consequence of the following proposition whose proof is deferred.

Let $r>1$ and let $Z^{(1)},...,Z^{(r)}$ be $r$ independent recurrent RWRE in the same environment $\omega$ satisfying \eqref{eqHypothesesSinai}.

\begin{prop}\label{Lemmeannexe1Bis}
Let $\delta\in(0,1)$.
There exist events $\Delta_N(\delta)$, $N\geq 1$ and $\widehat b(N)\in 2\Z$ depending only on the environment $\omega$,
and constants $c(\delta)>0$, $\e(\delta)\in(0,1)$, with
\begin{equation}\label{eqProbaLiminfBN}
    \liminf_{N\to+\infty}\p\big[\Delta_N(\delta)\big]
\geq
    1-\delta,
\end{equation}
such that
$$
    \forall (y_1, \dots, y_r)\in (2\Z)^r,
    \exists N_1\in\N,
    \forall N\geq N_1,
    \forall \omega\in \Delta_N(\delta),
    \forall j\in\{1,\dots, r\},
$$
\begin{equation}\label{PZ=b}
   \forall n\in\big[N^{1-\e(\delta)}, N\big]\cap(2\N),
\quad
    P_\omega^{y_j}\big[Z_n^{(j)}=\widehat b(N)\big]\geq c(\delta).
\end{equation}
This remains true if $(2\Z)^r$ and $2\N$ are replaced respectively by
$(2\Z+1)^r$ and $2\N+1$.
\end{prop}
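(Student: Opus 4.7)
The plan is to combine (i) a precise, environment-only choice of the valley bottom $\widehat b(N)$, (ii) a reversibility-based return-to-the-bottom estimate for a walk started at $\widehat b(N)$, and (iii) a same-environment coupling to transfer this estimate to walks started at any fixed $y_j$. The main obstacle will be (iii): controlling the meeting time with a quantitative quenched bound, uniformly on a high-probability event.

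First I would fix $\varepsilon=\varepsilon(\delta)>0$ small and $K=K(\delta)$ large, set $h_N:=\log N+K\log^{(2)} N$, and import the Brownian approximation $W$ of $V$ from Lemma \ref{lem:KMT}. Let $\widehat b(N)\in 2\Z$ be the even integer closest to the bottom $b_0(W,h_N)$ of the $h_N$-valley of $W$ containing the origin. I would then define $\Delta_N(\delta)$ as the event on which: this valley $[M_-,M_+]$ has barrier heights in $[h_N,h_N+L(\delta)]$; it is contained in $[-(\log N)^{2+\delta},(\log N)^{2+\delta}]$; it contains no $(1-2\varepsilon)(\log N)$-extremum in its interior other than $\widehat b(N)$; and the KMT estimate $\sup_{|t|\leq(\log N)^{2+\delta}}|V(\lfloor t\rfloor)-W(t)|\leq \tilde C_1\log^{(2)}N$ holds. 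Brownian scaling combined with the densities of $h$-extrema recalled in Section \ref{independentenv} and Lemma \ref{lem:KMT} yield $\liminf_N\p[\Delta_N(\delta)]\geq 1-\delta$ for suitable $K,L,\varepsilon$.

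Next, on $\Delta_N(\delta)$, I would establish the quenched return estimate
\[
P^{\widehat b(N)}_\omega[Z_m=\widehat b(N)]\geq c_1(\delta),\qquad m\in[N^{1-\varepsilon},N]\cap 2\N.
\]
Barriers of height $\geq h_N=\log N+K\log^{(2)} N$, combined with \eqref{InegProba1}--\eqref{InegProba2} and Markov's inequality, give $P_\omega^{\widehat b(N)}[\tau(M_-)\wedge\tau(M_+)\leq N]=o(1)$. Reflecting the walk at $M_\pm$ produces a reversible irreducible Markov chain with stationary measure $\pi_\omega\propto\mu_\omega$ from \eqref{reversiblemeas}. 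Since $\widehat b(N)$ minimizes $V$ on $[M_-,M_+]$, a geometric-sum estimate on $\sum_x e^{-V(x)}$ inside the valley (using the KMT comparison) gives $\pi_\omega(\widehat b(N))\geq c_0(\delta)>0$. The absence of internal $(1-2\varepsilon)(\log N)$-extrema translates via the standard one-dimensional electric-network formula into a spectral gap of the reflected chain of order at least $N^{-(1-2\varepsilon)}(\log N)^{-O(1)}$, so that on the correct parity class the reflected walk has mixed by time $N^{1-\varepsilon}$. Subtracting the exit probability from the reflected return probability gives the claim.

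The heart of the proof is then a coupling between the walk started at $y_j$ and the walk started at $\widehat b(N)$, realized as two copies in the same environment $\omega$ evolving independently until their first meeting time $T$ and coinciding afterwards. For $|y_j|$ fixed and $N\geq N_1(y_1,\dots,y_r)$ we have $y_j\in[M_-,M_+]$. I would show that on $\Delta_N(\delta)$,
\[
P_\omega[T>N^{1-\varepsilon}/2]\leq 1/4.
\]
Both copies reach a $(\log N)^{O(1)}$-neighborhood of $\widehat b(N)$ in time $(\log N)^{O(1)}$ (by the Sinai-type localization of Proposition \ref{LemmaLocalisationVerticale} specialized to this event); once both walks are in that window, a quantitative same-environment collision estimate for the difference process $Z_n^{y_j}-Z_n^{\widehat b(N)}$, iterated over time blocks of length $(\log N)^{O(1)}$ to boost the hit probability, yields a meeting in at most $N^{1-\varepsilon}/2$ additional steps with probability $\geq 3/4$. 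Combining with the return estimate above, for $\omega\in\Delta_N(\delta)$, $N\geq N_1$ and even $n\in[N^{1-\varepsilon},N]$,
\[
P^{y_j}_\omega[Z_n^{(j)}=\widehat b(N)]\geq P^{\widehat b(N)}_\omega[Z_n=\widehat b(N)]-P_\omega[T>n]\geq c(\delta)>0,
\]
which is \eqref{PZ=b}. The odd parity case is identical: the same even $\widehat b(N)$ works since $y_j+n$ is even in both cases. The hardest step will be the uniform quenched bound on $P_\omega[T>N^{1-\varepsilon}/2]$: although \cite{NMFa} ensures that two independent same-environment RWRE collide infinitely often, a \emph{quantitative} collision-time estimate holding uniformly on a set of environments of probability $\geq 1-\delta$ is genuinely new here, and seems to require a careful analysis of the difference process inside the main valley.
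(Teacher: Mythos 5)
The overall architecture of your proposal (build a high-probability, environment-only event $\Delta_N(\delta)$; define $\widehat b(N)$ as the even bottom of the central valley; reduce to a return estimate at $\widehat b(N)$; then transfer to a general even starting point $y_j$) matches the paper's plan, and your definitions of the events are in the right spirit. The gap is in the transfer step, which is exactly where the real difficulty lies. You propose to couple $Z^{y_j}$ directly with $Z^{\widehat b(N)}$ (two deterministically-started walks in the same environment, run independently until meeting) and to prove the quenched bound $P_\omega[T>N^{1-\varepsilon}/2]\leq 1/4$ uniformly on $\Delta_N(\delta)$. You then acknowledge that this quantitative quenched collision-time estimate is ``genuinely new'' and ``seems to require a careful analysis of the difference process,'' and you do not supply it. This is the load-bearing estimate of your whole argument. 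The ingredients you gesture at do not fill the gap: Proposition \ref{LemmaLocalisationVerticale} is an \emph{annealed} statement for two \emph{independent} environments and cannot be ``specialized'' to a quenched bound on a fixed good-environment event; and the claim that both copies reach a $(\log N)^{O(1)}$-neighborhood of $\widehat b(N)$ in $(\log N)^{O(1)}$ time is off by an exponential --- crossing the potential barrier between $0$ and $\widehat b(N)$ (of order $\varepsilon_4\log N$, by design of $\Delta_N^{(4)}$) takes time polynomial in $N$, not polylogarithmic.

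The paper sidesteps this difficulty with a different decomposition that is the missing idea here. It first transfers $y_j\to\widehat b(N)$ via the strong Markov property at the hitting time $\tau(\widehat b(N))$, controlled by a one-barrier ruin estimate (Lemma \ref{LemmaTempsAtteintebN}: from $y_j$ the walk reaches $\widehat b(N)$ before $\widehat x_0$ in time $\le N^{1-\varepsilon_1}$ with probability $1-N^{-\Theta(\varepsilon)}$ on $\Delta_N$), which is far easier than a two-walk collision estimate. It then proves the return bound $P_\omega^{\widehat b(N)}[Z_k=\widehat b(N)]\geq c(\delta)$ by coupling $Z$ started at $\widehat b(N)$ with a \emph{reflected} walk $\widehat Z$ started from its stationary measure $\widehat\nu$ on the appropriate parity class. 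The meeting time is then controlled by the monotonicity of $\widehat Z_n-Z_n$ (jumps in $\{-2,0,2\}$, so the sign is preserved until meeting): if $Z$ reaches the high levels $\widehat L^\pm$ before meeting, $\widehat Z$ must simultaneously lie in the tail region $[\widehat x_0,\widehat L^-]\cup[\widehat L^+,\widehat x_2]$, which has small $\widehat\nu$-mass (Lemma \ref{LemMajorationNuBordsVallee}). Coupling against \emph{stationarity} is precisely what turns the otherwise delicate collision-time question into a tail estimate for an explicit measure. Your spectral-gap route for the return estimate from $\widehat b(N)$ is a plausible alternative to the paper's coupling at that step, but as long as the $y_j\to\widehat b(N)$ transfer is done via a two-walk coupling with an unproved collision bound, the argument does not close.
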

\begin{proof}[Proof of Proposition \ref{Lemmeannexe1}]
Let $\delta\in(0,1)$. First, notice that by \eqref{eqProbaLiminfBN},
\begin{equation}\label{eqProbaLiminfBNio}
    \p\big[\limsup_{N\to+\infty}\Delta_N(\delta)\big]
=
    \p\left[\bigcap_{N\in\N}\bigcup_{n\geq N }\Delta_n(\delta)\right]
=
    \lim_{N\to+\infty}\p\bigg[\bigcup_{n\geq N }\Delta_n(\delta)\bigg]
\geq
    \liminf_{N\to+\infty} \p[\Delta_N(\delta)]
\geq
    1-\delta.
\end{equation}
Now, let $(y_1, \dots, y_r)\in (2\Z)^r$.
There exists $N_1\in\N$ such that for every $N\geq N_1$, on $\Delta_N(\delta)$,
$$
    \sum_{n=1}^N\frac 1n
    \sum_{k\in\mathbb Z}\prod_{j=1}^r P_\omega^{y_j}\left[Z_n^{(j)}=k\right]
\geq
    \sum_{n=N^{1-\e(\delta)}}^N\frac {{\bf 1}_{2\N}(n)}n
    [c(\delta)]^r
\geq
    [c(\delta)]^r \frac{\e(\delta)}{4}\log N
$$
if $N$ is large enough.
Consequently, we have on $\limsup_{N\to+\infty} \Delta_N(\delta)=\{\omega\in\Delta_N(\delta) \text{ i.o.}\}$,
$$
    \limsup_{N\to+\infty}\frac{1}{\log N}
    \sum_{n=1}^N\frac 1n
    \sum_{k\in\mathbb Z}\prod_{j=1}^r P_\omega^{y_j}\left[Z_n^{(j)}=k\right]
\geq
    [c(\delta)]^r \frac{\e(\delta)}{4}
>
    0.
$$
This and \eqref{eqProbaLiminfBNio} prove Proposition \ref{Lemmeannexe1} in the case
$(y_1, \dots, y_r)\in (2\Z)^r$. The proof in the case $(y_1, \dots, y_r)\in (2\Z+1)^r$
is similar.
\end{proof}
Now, it remains to  prove Proposition \ref{Lemmeannexe1Bis}.

\subsection{Main idea of the proof of Proposition \ref{Lemmeannexe1Bis}}
Let $Z$ be a RWRE as in Section \ref{SinaiEstimate}.
In order to prove that $Z_n$ is localized at $\widehat b(N)$ with a quenched probability $P_\omega^{y_j}$ greater than a positive constant, we use a coupling argument between a copy of
$Z$ starting from $\widehat b(N)$ and a RWRE $\widehat Z$ reflected in some valley around $\widehat b(N)$, under its invariant probability measure.
To this aim, we approximate the potential $V$ by a Brownian motion $W$, use $W$ to build the set of good environments $\Delta_N(\delta)$ and estimate its probability
$\p\big[\Delta_N(\delta)\big]$, and then define $\widehat b(N)$.

We build $\Delta_N$ as the intersection of $7$ events $\Delta_N^{(i)}$,
$i=0,\dots, 6$.
First, $\Delta_N^{(0)}$ gives an approximation of $V$ by $W$.
Loosely speaking $\Delta_N^{(1)}$ guarantees that the central valley (containing the origin) of height $\log n$
has a height much larger than $\log n$, so that $Z$ will not escape this valley before time $n$ (see Lemma \ref{LemmaProaResterDansVallee}).
$\Delta_N^{(1)}$ also ensures that
this central valley
does not contain  sub-valleys of height close to $\log n$, so that with high quenched probability, $Z$ reaches quickly the bottom  of this valley without being trapped in such subvalleys (see Lemma \ref{LemmaTempsAtteintebN}).
To this aim, we also need that the bottom of this valley is not too far from $0$,
which is given by $\Delta_N^{(3)}$, and that the value of the potential between $0$
and the bottom of this valley is
low enough, which is given by $\Delta_N^{(4)}$ and $\Delta_N^{(2)}$.
Additionally, $\Delta_N^{(5)}$ is useful to provide estimates for the invariant probability measure $\widehat \nu$,
and is useful to prove that the coupling occurs quickly
(Lemma \ref{Lemma_Time_Coupling}, using Lemmas \ref{LemInegaliteSupAtteinteLpm} and
\ref{LemMajorationNuBordsVallee}).
Finally, $\Delta_N^{(6)}$ says that $\widehat \nu\big(\widehat b(N)\big)$,
which is roughly the invariant probability measure at the bottom of the central valley,
is larger than a positive constant.

\subsection{Construction of $\Delta_N(\delta)$}
Let $\delta\in(0,1)$.
The aims of this section are the construction of the set of environments $ \Delta_N(\delta)$ satisfying \eqref{eqProbaLiminfBN} and
\eqref{PZ=b},
and the proof of \eqref{eqProbaLiminfBN}. We will construct $ \Delta_N(\delta)$ as an intersection
\begin{equation}\label{Deltandef}
    \Delta_N(\delta):=\bigcap_{i=0}^6 \Delta_N^{(i)},
\end{equation}
where the sets $\Delta_N^{(i)}$, defined below, also depend on $\delta$.
In what follows, $\e_i$ is for $i>0$ a positive constant depending on $\delta$
and used to define the set $\Delta_N^{(i)}$.
As in the previous section, we will approximate the potential $V$ by a two-sided Brownian motion $W$
such that $\var (W(1))=\var (V(1))$  (see Figure \ref{figure1}
for patterns of the potential $V$ and of $W$ in $\Delta_N(\delta)$).
We start with $\Delta_N^{(1)},\dots,\Delta_N^{(5)}$ which are $W$-measurable.
Using the same notation as before for $h$-extrema, for a two-sided Brownian motion $W$,
we define
\begin{equation}\label{DeltaN1def}
    \Delta_N^{(1)}
:=
    \{W\in\mathcal{W}\}\cap
    \bigcap_{i=-1}^1\big\{H[T_i(W, (1-2\e_1)\log N)]\geq (1+2\e_1)\log N\big\},
\end{equation}
\begin{equation}\label{DeltaNRdef}
    \Delta_N^{(R)}
 :=
    \big\{
        x_1\big( W, (1-2\e_1)\log N\big) \text{ is a } ((1-2\e_1)\log N)\text{-minimum for } W
    \big\},
\end{equation}

and $\Delta_N^{(L)} := \big[\Delta_N^{(R)}\big]^c$,
where $R$ stands for right and $L$ for left,
$\Delta_N^{(2)}:=\Delta_N^{(2,R)}\cup \Delta_N^{(2,L)}$ with
\begin{equation}\label{DeltaN2Rdef}
    \Delta_N^{(2,R)}
:=
    \bigg\{
            \max_{\big[0,  x_1\big(W, (1-2\e_1)\log N\big)\big]} W
            <
            W\big(x_0\big(W, (1-2\e_1)\log N\big)\big)
            -\e_2\log N
    \bigg\}
    \cap \Delta_N^{(R)},
\end{equation}
\begin{equation}\label{DeltanNdef}
    \Delta_N^{(2,L)}
:=
    \bigg\{
            \max_{\big[x_0\big(W, (1-2\e_1)\log N\big),0\big]} W
            <
            W\big(x_1\big(W, (1-2\e_1)\log N\big)\big)
            -\e_2 \log N
    \bigg\}
    \cap \Delta_N^{(L)};
\end{equation}

\begin{equation}\label{DeltaN3def}
\Delta_N^{(3)}
:=
    \Big\{
          -\e_3^{-1}(\log N)^2
        \leq
          x_{-1}[W,(1-2\e_1)\log N]
        \leq
          x_{2}[W,(1-2\e_1)\log N]
        \leq
          \e_3^{-1}(\log N)^2
    \Big\};
\end{equation}
\begin{equation}\label{DeltaN4def}
    \Delta_N^{(4)}
:=
    \cap_{i=0}^1\{|W(x_i(W,(1-2\e_1)\log N))|>\e_4\log N\}
\end{equation}
and
$\Delta_N^{(5)}:=\Delta_N^{(5,R)}\cup \Delta_N^{(5,L)}$, where
\begin{equation}\label{DeltaN5def}
\Delta_N^{(5,L)}
:=
    \bigg\{
            \min_{\big[0,  x_1\big(W, (1-2\e_1)\log N\big)\big]} W
            >
            W\big(x_0\big(W, (1-2\e_1)\log N\big)\big)
            +\e_5\log N
    \bigg\}
    \cap \Delta_N^{(L)},
\end{equation}

\begin{equation}\label{DeltaN5Rdef}
    \Delta_N^{(5,R)}
:=
    \bigg\{
            \min_{\big[x_0\big(W, (1-2\e_1)\log N\big),0\big]} W
            >
            W\big(x_1\big(W, (1-2\e_1)\log N\big)\big)
            +\e_5 \log N
    \bigg\}
    \cap \Delta_N^{(R)}.
\end{equation}

\smallskip

\begin{lem}\label{LEMME000}
Let $W$ be a two-sided Brownian motion such that $\var (W(1))=\var (V(1))$.
There exist $(\varepsilon_1,\varepsilon_2,\varepsilon_3,\varepsilon_4, \varepsilon_5)\in(0,1/10)^4$
with $\varepsilon_5=\varepsilon_2$ such that, for every $i\in\{1,...,5\}$,
$\p [\Delta_N^{(i)}]>1-\delta/10 $.
\end{lem}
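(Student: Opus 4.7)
The approach is to reduce each $\Delta_N^{(i)}$ to a scale-free event for a standard Brownian motion via Brownian scaling, and then to choose the $\varepsilon_i$'s small enough. Setting $h:=(1-2\varepsilon_1)\log N$, $\sigma^2:=\var(W(1))$ and $\tilde W(t):=W(h^2 t/\sigma^2)/h$, the process $\tilde W$ is a standard Brownian motion, and $y$ is an $h$-extremum of $W$ if and only if $y\sigma^2/h^2$ is a $1$-extremum of $\tilde W$. Under this rescaling every event $\Delta_N^{(i)}$ becomes an event for $\tilde W$ whose probability depends only on $(\varepsilon_1,\dots,\varepsilon_5)$ and not on $N$, so it suffices to show that in each case this probability can be made $>1-\delta/10$ by taking the relevant $\varepsilon_i$ small.

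For $\Delta_N^{(1)}$, the condition $H(T_i(W,h))\ge(1+2\varepsilon_1)\log N$ for $i\in\{-1,0,1\}$ rescales to $e(T_i(\tilde W,1))\ge 4\varepsilon_1/(1-2\varepsilon_1)$; by the results of Neveu--Pitman and Cheliotis already used in Lemma~\ref{LemmaCardinalHTiPetits}, the three excess heights are independent (up to an independent sign of the central slope), with $e(T_{\pm 1}(\tilde W,1))\sim\mathrm{Exp}(1)$ and $e(T_0(\tilde W,1))$ of density $(2x+1)e^{-x}/3$ on $(0,\infty)$, so each bound holds with probability tending to $1$ as $\varepsilon_1\to 0$. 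For $\Delta_N^{(3)}$, the condition rescales to a uniform bound on $|x_{-1}(\tilde W,1)|$ and $|x_{2}(\tilde W,1)|$, which are a.s.\ finite, so the probability tends to $1$ as $\varepsilon_3\to 0$. For $\Delta_N^{(4)}$, one has $|\tilde W(x_i(\tilde W,1))|>0$ almost surely for $i\in\{0,1\}$ since $0$ cannot be a $1$-extremum of $\tilde W$ (Brownian motion oscillates across $0$ in every neighborhood), so the probability tends to $1$ as $\varepsilon_4\to 0$.

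The most delicate point and main obstacle is the pair $\Delta_N^{(2)}, \Delta_N^{(5)}$. Decomposing $\Delta_N^{(2)}=\Delta_N^{(2,R)}\sqcup\Delta_N^{(2,L)}$, where by symmetry $\p[\Delta_N^{(R)}]=\p[\Delta_N^{(L)}]=1/2$, the extra condition defining $\Delta_N^{(2,R)}$ rescales to $\max_{[0,\tilde x_1(\tilde W,1)]}\tilde W<\tilde W(\tilde x_0(\tilde W,1))-\varepsilon_2/(1-2\varepsilon_1)$. On $\Delta_N^{(R)}$ we have $\tilde x_0(\tilde W,1)<0<\tilde x_1(\tilde W,1)$ a.s., and between two consecutive $1$-extrema of opposite types $\tilde W$ attains its maximum on $[\tilde x_0(\tilde W,1),\tilde x_1(\tilde W,1)]$ uniquely at the $1$-maximum $\tilde x_0(\tilde W,1)$ (uniqueness of the argmax of Brownian motion on a random bounded interval); consequently $\max_{[0,\tilde x_1(\tilde W,1)]}\tilde W<\tilde W(\tilde x_0(\tilde W,1))$ a.s., and monotone convergence yields $\p[\Delta_N^{(2,R)}]\to 1/2$ as $\varepsilon_2\to 0$. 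Summing with the symmetric $L$-contribution gives $\p[\Delta_N^{(2)}]\to 1$, and an identical argument with min in place of max gives $\p[\Delta_N^{(5)}]\to 1$ as $\varepsilon_5=\varepsilon_2\to 0$. Finally, choosing $\varepsilon_1$ small enough for $\Delta_N^{(1)}$, then $\varepsilon_3,\varepsilon_4$ small enough for $\Delta_N^{(3)},\Delta_N^{(4)}$, and finally $\varepsilon_2=\varepsilon_5$ small enough for $\Delta_N^{(2)},\Delta_N^{(5)}$, each $\p[\Delta_N^{(i)}]$ exceeds $1-\delta/10$, proving the lemma.
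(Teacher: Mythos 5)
Your proof is correct and follows essentially the same strategy as the paper: rescale $W$ by Brownian scaling so that each $\Delta_N^{(i)}$ becomes an $N$-independent event for a fixed Brownian motion, observe that the corresponding strict inequalities hold almost surely when the $\varepsilon_i$'s are set to zero (using the Neveu--Pitman/Cheliotis slope decomposition for $\Delta_N^{(1)}$, finiteness of $h$-extrema for $\Delta_N^{(3)}$, oscillation/uniqueness-of-argmax for $\Delta_N^{(2)},\Delta_N^{(4)},\Delta_N^{(5)}$), and then take the $\varepsilon_i$'s small enough, fixing $\varepsilon_1$ first since the remaining events depend on it. The only cosmetic difference is the normalization (the paper rescales to a Brownian motion of variance $\sigma^2$ via $\widetilde W_N(x)=W(x(\log N)^2)/\log N$, while you rescale to a standard one); one tiny imprecision is in $\Delta_N^{(4)}$, where the operative fact is not that $0$ cannot be a $1$-extremum but rather that $\tilde W(x_0(\tilde W,1))=\max_{[x_0(\tilde W,1),0]}\tilde W>0$ a.s.\ (resp.\ $<0$ on the other side) because a Brownian motion takes positive and negative values in every neighbourhood of the origin — but your parenthetical shows you have the right idea.
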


\begin{figure}[htbp]
\includegraphics[scale=0.837]{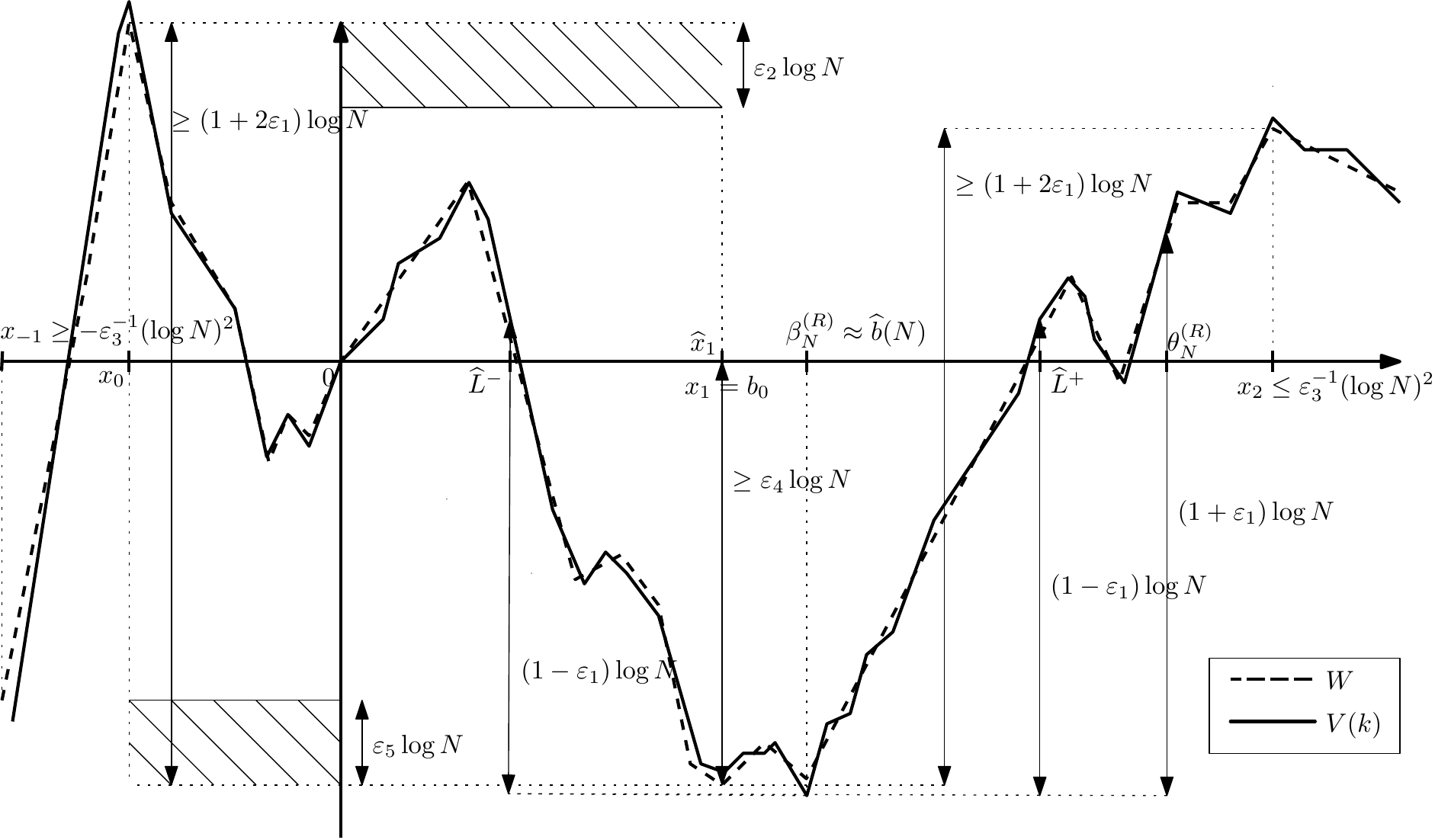}
\caption{Pattern of the potential $V$ and of $ W$ for $\omega\in\Delta_N\cap \Delta_N^{(R)}$,
where $x_i$ denotes $x_i(W, (1-2\varepsilon_1)\log N)$.}
\label{figure1}
\end{figure}

\begin{proof}
First, by the same arguments as in the proof of Lemma \ref{LemmaCardinalHTiPetits}, there exists
$\e_1\in(0,1/10)$ such that
$\p\big[\Delta_N^{(1)}\big]\geq 1-\delta/10$.

We now introduce $\widetilde W_N(x):=W(x(\log N)^2)/\log N$, which has the same law as $W$ by scaling.
We notice that $x_0\big(\widetilde W_N, 1-2\e_1\big)$ is a local extremum for $\widetilde W_N$, so
$\p\big[x_0\big(\widetilde W_N, 1-2\e_1\big)=0\big]=0$. Hence we have
$x_0\big(\widetilde W_N, 1-2\e_1\big)<0<x_1\big(\widetilde W_N, 1-2\e_1\big)$ a.s.
We start with the case where $x_1\big(\widetilde W_N, 1-2\e_1\big)$ is a
$(1-2\e_1)$-minimum for $\widetilde W_N$,
that is, the bottom $b_0(W,(1-2\e_1)\log N)$ of the central valley of depth at least $(1-2\e_1)\log N$ for $W$
is on the right.
That is, we assume we are on $\Delta_N^{(R)}\cap \mathcal{W}$.
Since $\widetilde W_N$ is continuous on $\big[0,  x_1\big(\widetilde W_N, 1-2\e_1\big)\big]$,
$\widetilde W_N$ attains its maximum on this interval at some $y\in\big[0,  x_1\big(\widetilde W_N, 1-2\e_1\big)\big]$.
So, $\widetilde W_N(y)\in\big[0, \widetilde W_N\big(x_0\big(\widetilde W_N, 1-2\e_1\big)\big)\big]$,
since
$
\max_{[x_0(\widetilde W_N, 1-2\e_1),x_1(\widetilde W_N, 1-2\e_1)]}\widetilde W_N
=\widetilde W_N\big(x_0\big(\widetilde W_N, 1-2\e_1\big)\big)$.
If $\widetilde W_N(y)= \widetilde W_N\big(x_0\big(\widetilde W_N, 1-2\e_1\big)\big)$,
then $y$ would be
a $(1-2\e_1)$-maximum for $\widetilde W_N$, with
$x_0\big(\widetilde W_N, 1-2\e_1\big)<y<x_1\big(\widetilde W_N, 1-2\e_1\big)$, which is not possible on $\mathcal{W}$.
So,
$
    \widetilde W_N(y)
=
    \max_{\big[0,  x_1\big(\widetilde W_N, 1-2\e_1\big)\big]} \widetilde W_N
<
    \widetilde W_N\big(x_0\big(\widetilde W_N, 1-2\e_1\big)\big)
$.
Consequently, there exists $\e_2\in(0,1/10)$ such that
$$
    \p\left[
            \max_{\big[0,  x_1\big(\widetilde W_N, 1-2\e_1\big)\big]} \widetilde W_N
            <
            \widetilde W_N\big(x_0\big(\widetilde W_N, 1-2\e_1\big)\big)
            -\e_2
      \bigg|
      \Delta_N^{(R)}
      \right]
\geq
    1-\frac{\delta}{10},
$$
and the same is true if we exchange $x_0$ and $x_1$ by symmetry
(and then $[0, x_1(\dots)]$ is replaced by $[x_0(\dots), 0]$, and $\Delta_N^{(R)}$ by $\Delta_N^{(L)}$).
Hence
$\p\big[\Delta_N^{(2)}\big]\geq 1-\delta/10$ by scaling.

Moreover, there exists $\e_3\in(0,1/10)$ such that
$
    \p\big[\Delta_N^{(3)}\big]
=
    \p\big[-\e_3^{-1}\leq x_{-1}[\widetilde W_N,1-2\e_1]\leq x_{2}[ \widetilde W_N,1-2\e_1] \leq \e_3^{-1}
\big]
\geq
    1-\delta/10
$, where we get the first equality by scaling.

Finally, there exists $\e_4\in(0,1/10)$ such that $\p\big[\Delta_N^{(4)}\big]\geq 1-\delta/10$,
by scaling, since $\big|\widetilde W_N\big(x_i\big(\widetilde W_N, 1-2\e_1\big)\big)\big|>0$ a.s. for $i\in\{0,1\}$.
Indeed, $x_0\big(\widetilde W_N, 1-2\e_1\big)<0$,
so $\widetilde W_N\big(x_0\big(\widetilde W_N, 1-2\e_1\big)\big)=\max_{[x_0(\widetilde W_N, 1-2\e_1),0]}\widetilde W_N>0$ a.s.
on $\Delta_N^{(R)}\cap \mathcal{W}$,
and
$\widetilde W_N\big(x_0\big(\widetilde W_N, 1-2\e_1\big)\big)=\min_{[x_0(\widetilde W_N, 1-2\e_1),0]}\widetilde W_N<0$ a.s.
on $\Delta_N^{(L)}\cap \mathcal{W}$,
so $\big|\widetilde W_N\big(x_0\big(\widetilde W_N, 1-2\e_1\big)\big)\big|>0$ a.s.
Similarly, $\big|\widetilde W_N\big(x_1\big(\widetilde W_N, 1-2\e_1\big)\big)\big|>0$ a.s.

Replacing $W$ by $-W$ in $\Delta_N^{(2)}$ proves that with $\e_5:=\e_2>0$, the event $\Delta_N^{(5)}$
satisfies
$\p\big[\Delta_N^{(5)}\big]=\p\big[\Delta_N^{(2)}\big]\geq 1-\delta/10$.
\end{proof}
From now on, $\varepsilon_1,...,\varepsilon_5$ are the ones given by Lemma \ref{LEMME000}. Let
\begin{equation}\label{epsilondef}
\varepsilon:=\min(\varepsilon_1,\dots,\varepsilon_5)/9.
\end{equation}
\begin{lem}\label{lemmeKMT2}
Up to an enlargement of $(\Omega,\mathcal F,\p)$, there exist a two-sided
Brownian motion $(W(s),\ s\in\R)$ defined on $\Omega$ such that
$\var(W(1))=\var(V(1))$ and a real number $\xi>0$ such that
$$
    \p\left[\sup_{|t|\le 2\varepsilon_3^{-1}(\log N)^2}\big|V(\lfloor t\rfloor)-W(t)\big| >\varepsilon\log N\right]
=
    O(N^{-\xi}).
$$
\end{lem}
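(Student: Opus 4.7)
The plan is to follow the same template as Lemma \ref{lem:KMT}, but to trade in the small error of order $\log^{(2)} N$ for a much smaller probability bound of order $N^{-\xi}$, by choosing the KMT deviation parameter to scale like $\log N$ rather than $\log^{(2)} N$. Concretely, I would apply the Koml\'os--Major--Tusn\'ady strong approximation \cite{KMT}, which after enlargement of $(\Omega,\mathcal F,\p)$ provides constants $a,b,c>0$ and a two-sided Brownian motion $W$ with $\var(W(1))=\var(V(1))$ such that
\begin{equation*}
\p\Bigl[\sup_{|k|\le M}|V(k)-W(k)|>a\log M + x\Bigr]\le b\,e^{-cx},\qquad M\in\N,\ x>0.
\end{equation*}

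I would apply this with $M:=\lfloor 2\varepsilon_3^{-1}(\log N)^2\rfloor+1$ and $x:=\tfrac{\varepsilon}{2}\log N$. Then $a\log M\sim 2a\log^{(2)} N = o(\log N)$, so for $N$ large we have $a\log M + x\le \tfrac{3\varepsilon}{4}\log N$, and the KMT estimate gives
\begin{equation*}
\p\Bigl[\sup_{|k|\le M}|V(k)-W(k)|>\tfrac{3\varepsilon}{4}\log N\Bigr]\le b\,N^{-c\varepsilon/2}.
\end{equation*}

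The remaining step is to pass from integer $k$ to $t\in\R$ with $|t|\le M$, i.e.\ to bound the Brownian oscillation $|W(\lfloor t\rfloor)-W(t)|$ uniformly. A union bound over the $\le 2M+1$ unit subintervals together with the reflection principle (using $\sup_{[0,1]}|W|\stackrel{d}{=}|W(1)|$ up to a factor) yields
\begin{equation*}
\p\Bigl[\sup_{|t|\le M}|W(t)-W(\lfloor t\rfloor)|>\tfrac{\varepsilon}{4}\log N\Bigr]\le C(\log N)^{2}\exp\Bigl[-\tfrac{(\varepsilon\log N)^2}{32\var(V(1))}\Bigr],
\end{equation*}
which decays faster than any polynomial and in particular is $O(N^{-\xi})$ for any $\xi>0$. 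Combining this with the KMT estimate via the triangle inequality, and choosing $\xi:=c\varepsilon/4$, I obtain the claimed bound $O(N^{-\xi})$.

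There is no real obstacle here: the whole argument is a rerun of the proof of Lemma \ref{lem:KMT} with the parameter $x$ inflated from logarithmic-of-logarithmic size to logarithmic size. The only thing to check carefully is that the deterministic error $a\log M$ produced by KMT on the scale $M\asymp(\log N)^2$ is indeed negligible compared with $\varepsilon\log N$, which is clear since $\log M=O(\log^{(2)} N)$. Note also that the Brownian motion $W$ obtained here is independent of the one produced in Lemma \ref{lem:KMT}; since we only use one of them at a time in Section \ref{sameenv}, this causes no issue.
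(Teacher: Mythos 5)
Your proof is correct and follows essentially the same route as the paper's: apply the KMT coupling \eqref{KMT} with $M\asymp(\log N)^2$ and deviation parameter of order $\log N$, then bound the Brownian oscillation on unit intervals by a union bound and a Gaussian tail estimate, and combine via the triangle inequality. The only cosmetic difference is the split $\tfrac{3\varepsilon}{4}+\tfrac{\varepsilon}{4}$ instead of $\tfrac{\varepsilon}{2}+\tfrac{\varepsilon}{2}$, which does not affect the argument.
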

\begin{proof}
Due to \eqref{KMT} (applied with $N$ replaced by $2\e_3^{-1}(\log N)^2$
and $x=(\e/2)\log N-a\log[2\e_3^{-1}(\log N)^2]$), there exists  for $N$ large enough,
possibly on an enlarged probability space,
a Brownian motion $(W(s), \,s\in\R)$ such that
$$
    \p\left[\sup_{|k|\le 2\varepsilon_3^{-1}(\log N)^2}|V(k)-W(k)| >\frac\varepsilon 2\log N\right]\le N^{-c\frac{\varepsilon}{10}}
$$
and such that $\var(W(1))=\var(V(1))$.
Moreover,
\begin{eqnarray*}
    \p\left[\sup_{|t|\le 2\varepsilon_3^{-1}(\log N)^2}\big|W(t)-W(\lfloor t\rfloor)\big| >\frac\varepsilon 2\log N\right]
&\le&
    5\varepsilon_3^{-1}(\log N)^2\p\left[\sup_{|t|\le 1}|W(t)| >\frac\varepsilon 2\log N\right]
\\
&=&
    O((\log N)^2 \exp[-\varepsilon^2(\log N)^2/(8\sigma^2)]).
\end{eqnarray*}
Combining these two inequalities proves the lemma.
\end{proof}

Recall \eqref{Deltandef}: it remains to define $\Delta_N^{(0)}$ and $\Delta_N^{(6)}$,
see \eqref{eq_def_Delta0} and \eqref{eq_def_Delta6} below, and then we claim
\begin{lem}\label{LemmaProbaDeltaN}
For large $N$, $\p\big[\Delta_N(\delta)\big]\geq 1-\delta$. Hence \eqref{eqProbaLiminfBN} holds true.
\end{lem}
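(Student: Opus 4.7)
The plan is to complete the definition of $\Delta_N(\delta)$ by specifying the two remaining events $\Delta_N^{(0)}$ and $\Delta_N^{(6)}$, and then simply to take a union bound against the estimates of Lemmas \ref{LEMME000} and \ref{lemmeKMT2}. First I would set
\begin{equation}\label{eq_def_Delta0}
    \Delta_N^{(0)}
:=
    \Big\{\sup_{|t|\le 2\varepsilon_3^{-1}(\log N)^2}|V(\lfloor t\rfloor)-W(t)| \le \varepsilon\log N\Big\},
\end{equation}
with $W$ and $\varepsilon$ as in \eqref{epsilondef} and Lemma \ref{lemmeKMT2}, so that $\p[(\Delta_N^{(0)})^c]=O(N^{-\xi})$ and therefore $\p[\Delta_N^{(0)}]\ge 1-\delta/10$ for all $N$ large.

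Next I would define $\widehat b(N)$ as the even integer closest to $b_0(W,(1-2\varepsilon_1)\log N)$ lying inside the central valley of $W$ (on $\Delta_N^{(R)}$ this is near $x_1(W,(1-2\varepsilon_1)\log N)$; on $\Delta_N^{(L)}$, near $x_0(W,(1-2\varepsilon_1)\log N)$), and, for an analogous construction, an odd integer variant for the second half of Proposition \ref{Lemmeannexe1Bis}. On $\Delta_N^{(3)}$, $\widehat b(N)$ lies comfortably inside the window controlled by $\Delta_N^{(0)}$. Then, letting $\widehat\nu$ denote the invariant probability measure of the walk reflected in the valley $[M_{\mathcal I}^-,M_{\mathcal I}^+]$ surrounding $\widehat b(N)$, namely
\[
    \widehat\nu(x)
=
    \frac{\mu_\omega(x)}{\sum_{y=M_{\mathcal I}^-}^{M_{\mathcal I}^+}\mu_\omega(y)},
\qquad
    \mu_\omega(x)=e^{-V(x)}+e^{-V(x-1)},
\]
I would take
\begin{equation}\label{eq_def_Delta6}
    \Delta_N^{(6)}
:=
    \big\{\widehat\nu(\widehat b(N))\ge \varepsilon_6\big\}
\end{equation}
for a positive constant $\varepsilon_6=\varepsilon_6(\delta)$ to be chosen small enough.

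The probability estimate on $\Delta_N^{(6)}$ is the only nonroutine step. On $\Delta_N^{(0)}\cap \Delta_N^{(1)}\cap\Delta_N^{(3)}$ the ratio $\widehat\nu(\widehat b(N))$ differs only by a multiplicative factor $e^{O(\varepsilon\log N)}$ from its Brownian analogue; rescaling via $\widetilde W_N(x):=W(x(\log N)^2)/\log N$ (as already used in Lemma \ref{LEMME000}) converts this into the value at the bottom of the invariant density of the analogous problem for the Brownian potential $\widetilde W_N$, which has a nondegenerate law that puts no mass at $0$ by standard properties of Brownian functionals on finite windows. Consequently $\varepsilon_6$ can be picked so that $\p[\Delta_N^{(6)}]\ge 1-\delta/10$ for all $N$ large.

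With the seven events in hand, the conclusion is immediate by the union bound:
\[
    \p\big[(\Delta_N(\delta))^c\big]
\le
    \sum_{i=0}^{6}\p\big[(\Delta_N^{(i)})^c\big]
\le
    \tfrac{7\delta}{10}
<
    \delta
\]
for all $N$ large, which yields both the bound claimed in the statement and, after taking $\liminf$, \eqref{eqProbaLiminfBN}. The main obstacle will be verifying that $\varepsilon_6$ can be chosen uniformly in $N$: one must check, using the scaling to $\widetilde W_N$, that the tail of $\widehat\nu(\widehat b(N))$ near $0$ is controlled independently of $N$, so that the constant $\varepsilon_6$ does not degenerate as $N\to\infty$.
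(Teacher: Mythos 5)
Your definition of $\Delta_N^{(0)}$ coincides with the paper's, and the overall strategy (define the two remaining events, then union-bound) is also the same. The divergence, and the gap, is in $\Delta_N^{(6)}$.

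The paper defines $\Delta_N^{(6)}$ as a \emph{potential-sum} event: it introduces stopping levels $\theta_N^{(R/L)}$ and minimizers $\beta_N^{(R/L)}$ for $V$ itself, and sets
$\Delta_N^{(6,R)} = \big\{\sum_{i=0}^{\theta_N^{(R)}-1} e^{-[V(i)-V(\beta_N^{(R)})]}\le\varepsilon_6^{-1}\big\}$
(and the mirrored event on the left). The probability bound $\p[\Delta_N^{(6,R)}]\ge 1-\delta/10$ is then a direct citation of \cite{Dembo_Gantert_Peres_Shi}, eq.\ (4.33), which is precisely a tightness estimate for this exponential sum of the discrete random-walk potential near its valley bottom. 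You instead define $\Delta_N^{(6)}$ in terms of $\widehat\nu(\widehat b(N))$, which is morally the same quantity (indeed the paper later lower-bounds $\widehat\nu(\widehat b(N))$ by $\varepsilon_6 e^{-a_0}/3$ using exactly $\Delta_N^{(6)}$), but the justification you offer — rescale $W$ to $\widetilde W_N$ and invoke ``standard properties of Brownian functionals on finite windows'' — does not establish what you need. The quantity whose tightness must be controlled is $\sum_y e^{-[V(y)-V(\widehat b(N))]}$, a \emph{discrete} sum over $O((\log N)^2)$ integers; after Brownian rescaling the exponent is magnified by a factor $\log N$, so this is not a functional of $\widetilde W_N$ that has an obvious nondegenerate limiting law, and the discrete sum does not simply become an integral under your rescaling. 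In fact, the tightness of this sum is a nontrivial fact about random-walk potentials (related to the Golosov/quasi-stationary measure at the bottom of a Sinai valley), and this is exactly what the Dembo--Gantert--Peres--Shi estimate provides. You correctly flag this as ``the main obstacle'' and ``the only nonroutine step,'' but you leave it unresolved; this is the genuine gap.

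Two secondary remarks. First, your $\widehat b(N)$ is defined as the even integer closest to the $W$-minimum $b_0(W,\cdot)$, whereas the paper takes $\widehat b(N)=2\lfloor\beta_N^{(R)}/2\rfloor$, defined directly from $V$. For Lemma \ref{LemmaProbaDeltaN} itself this does not matter, since the lemma only concerns $\p[\Delta_N]$, but the $V$-based definition is used essentially in the later lemmas of Section 5.3, so your variant would not plug into the rest of the proof without rework. Second, the paper's $\Delta_N^{(6)}=\Delta_N^{(6,R)}\cap\Delta_N^{(6,L)}$ has probability $\ge 1-2\delta/10$, not $\ge 1-\delta/10$, so the final union bound in the paper is $\p[(\Delta_N)^c]\le 8\delta/10$; your $7\delta/10$ would still suffice if your probability bound for $\Delta_N^{(6)}$ held, but as written it is not justified.
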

\begin{proof}
From now on, $W$ is the Brownian motion $W$ coming from Lemma \ref{lemmeKMT2} and  $\Delta_N^{(1)},...,\Delta_N^{(5)}$ are the corresponding events defined in 
\eqref{DeltaN1def}--\eqref{DeltaN5Rdef}.
We set
\begin{equation}\label{eq_def_Delta0}
    \Delta_N^{(0)}:= \left\{\sup_{|t|\le 2\varepsilon_3^{-1}(\log N)^2}\big|V(\lfloor t\rfloor)-W(t)\big| \le \varepsilon\log N\right\}.
\end{equation}
For $N$ large enough, $\p\big[\Delta_N^{(0)}\big]>1-\delta/10$ by Lemma \ref{lemmeKMT2}.
In particular on the event $\Delta_N^{(0)}\cap \Delta_N^{(3)}$, we can apply the inequalities of $\Delta_N^{(0)}$
to any $t\in\big[x_{-1}\big(W, (1-2\varepsilon_1)\log N\big), x_{2}\big(W, (1-2\varepsilon_1)\log N\big)\big]$,
since those $t$ satisfy $|t|\leq \e_3^{-1}(\log N)^2$.
We now introduce (here this is for $V$ directly, not for $W$)
\begin{eqnarray*}
    \theta_N^{(R)}
& := &
    \inf\Big\{i\in \N,\ V(i)-\min_{0\leq j\leq i} V(j) \geq (1+\e_1)\log N\Big\},
\\
    \beta_N^{(R)}
& := &
    \sup\Big\{i<\theta_N^{(R)},\ V(i)=\min_{0\leq j \leq \theta_N^{(R)}} V(j)\Big\},
\\
    \theta_N^{(L)}
& := &
    \sup\Big\{i\in (-\N),\ V(i)-\min_{i\leq j\leq 0} V(j) \geq (1+\e_1)\log N\Big\},
\\
    \beta_N^{(L)}
& := &
    \inf\Big\{i>\theta_N^{(L)},\ V(i)=\min_{\theta_N^{(L)}\leq j \leq 0} V(j)\Big\}.
\end{eqnarray*}
By (\cite{Dembo_Gantert_Peres_Shi}, eq. (4.33)), there exists $\e_6>0$ such that if $N$ is large enough,
$\p\big[\Delta_N^{(6,R)}\big]\geq 1-\delta/10$, where
\begin{equation}\label{Delta6}
    \Delta_N^{(6,R)}
:=
    \Bigg\{
        \sum_{i=0}^{\theta_N^{(R)}-1} e^{-\big[V(i)-V\big(\beta_N^{(R)}\big)\big]}
        \leq
        \e_6^{-1}
    \Bigg\},
\quad
    \Delta_N^{(6,L)}
:=
    \Bigg\{
        \sum_{i=\theta_N^{(L)}}^{-1} e^{-\big[V(i)-V\big(\beta_N^{(L)}\big)\big]}
        \leq
        \e_6^{-1}
    \Bigg\}.
\end{equation}
Replacing $V(.)$ by $V(-.)$ gives $\p\big[\Delta_N^{(6,L)}\big]\geq 1-\delta/10$.
Consequently, $\p\big[\Delta_N^{(6)}\big]\geq 1-2\delta/10$,
where
\begin{equation}\label{eq_def_Delta6}
    \Delta_N^{(6)}
:=
    \Delta_N^{(6,R)}\cap \Delta_N^{(6,L)}.
\end{equation}
This, combined with Lemma \ref{LEMME000} and $\p\big[\Delta_N^{(0)}\big]>1-\delta/10$, proves the lemma.
\end{proof}


\subsection{Random walk in an environment $\omega\in\Delta_N(\delta)$}
The aim of this subsection is to prove Proposition \ref{Lemmeannexe1Bis}
with the $\Delta_N(\delta)$ constructed in the previous section, see \eqref{Deltandef}--\eqref{DeltaN5Rdef},
\eqref{eq_def_Delta0} and \eqref{eq_def_Delta6}.
Let $\delta\in(0,1)$. We write $\Delta_N$ for $\Delta_N(\delta)$.
We also fix $(y_1, \dots, y_r)\in (2\Z)^r$.
There exists $N_2\in\N$ such that for $N\geq N_2$,
$\p\big[\Delta_N\big]\geq 1-\delta$ (due to Lemma \ref{LemmaProbaDeltaN}), $a_0\leq \varepsilon\log N$,
and  $\max_{1\leq j \leq r}|y_j|<  \min(\e_2,\e_4) (\log N)/(4a_0)$,
where we set  $a_0:=\log((1-\varepsilon_0)/\varepsilon_0)$.

We introduce, recalling \eqref{DeltaNRdef},
\begin{equation}\label{eqDefWidehatbN}
    \widehat b(N)
:=
    2\big\lfloor\beta_N^{(R)}/2\big\rfloor{\bf 1}_{\Delta_N^{(R)}}
+
    2\big\lfloor\beta_N^{(L)}/2\big\rfloor{\bf 1}_{\Delta_N^{(L)}}.
\end{equation}
We will carry out the proof in the case $\omega \in \Delta_N \cap \Delta_N^{(R)}$. The case
$\omega \in \Delta_N \cap \Delta_N^{(L)}$ is similar by symmetry.
We define $\widehat x_i:=\lfloor x_i(W, (1-2\e_1)\log N)\rfloor$,
and
$$
\qquad
    D_N^{(1)}
:=
    \big\{\tau\big(\widehat b(N)\big)<\tau(\widehat x_0)\big\},
\qquad
    D_N^{(2)}
:=
    \big\{\tau(\widehat x_0)\wedge\tau\big(\widehat b(N)\big)\leq N^{1-\e_1}\big\}.
$$
We sometimes write $x_i$ instead of $x_i(W,(1-2\e_1)\log N)$ in the following.

In the following lemma, we prove that
$Z$ goes quickly to $\widehat b(N)$, which is nearly the bottom of the potential $V$ in the central valley $\big[\widehat x_0,\widehat x_2\big]$,
with large probability under $P_\omega^{y_j}$,
uniformly on $\Delta_N \cap \Delta_N^{(R)}$ and $j$.

\begin{lem}\label{LemmaTempsAtteintebN}
There exists $N_3\in\N$ such that for all $N\geq N_3$,
\begin{equation*}
    \forall \omega\in \Delta_N \cap \Delta_N^{(R)},
    \forall j\in\{1,\dots , r\},
\quad
    P_\omega^{y_j}\big[D_N^{(1)}\big]
\geq
     1-N^{-(\e_1\wedge \e_2)/4},
\quad
    P_\omega^{y_j}\big[D_N^{(2)}\big]
\geq
     1-N^{-\e_1/4}.
\end{equation*}
\end{lem}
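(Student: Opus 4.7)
My plan is to derive the two inequalities via classical birth--death chain identities: the ruin probability formula \eqref{probaatteinte} for $D_N^{(1)}$, and Markov's inequality together with the expected hitting time bound \eqref{InegEsperance1} for $D_N^{(2)}$.

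First I verify that $\widehat x_0 < y_j < \widehat b(N)$, which is required to apply these formulas. On $\Delta_N^{(4)} \cap \Delta_N^{(0)} \cap \Delta_N^{(R)}$ one has $V(\widehat x_0) \geq (\varepsilon_4 - \varepsilon)\log N$, and since $|V(k)| \leq |k|a_0$, this forces $|\widehat x_0| \geq (\varepsilon_4 - \varepsilon)\log N / a_0$, which exceeds $2|y_j|$ by the standing assumption $|y_j| < \min(\varepsilon_2, \varepsilon_4)\log N/(4a_0)$; hence $y_j > \widehat x_0$. A symmetric argument using $\Delta_N^{(5,R)}$ (giving $V(\widehat b(N)) \leq -(\varepsilon_5 - \varepsilon)\log N$) yields $\widehat b(N) > |y_j|$.

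\emph{For $D_N^{(1)}$.} Applying \eqref{probaatteinte} yields
\begin{equation*}
1 - P_\omega^{y_j}[D_N^{(1)}] = \Big(\sum_{k=y_j}^{\widehat b(N)-1} e^{V(k)}\Big)\Big(\sum_{k=\widehat x_0}^{\widehat b(N)-1} e^{V(k)}\Big)^{-1}.
\end{equation*}
The denominator is bounded below by $e^{V(\widehat x_0)} \geq e^{W(x_0) - 2\varepsilon \log N}$. For the numerator, I bound $V(k)$ for $k \in [y_j, \widehat b(N)-1]$ as follows: on $[y_j, \widehat x_1]$, $\Delta_N^{(2,R)}$ gives $V(k) \leq W(x_0) - (\varepsilon_2 - \varepsilon)\log N$; on $[\widehat x_1, \widehat b(N)-1]$, the definition of $\theta_N^{(R)}$ together with $\Delta_N^{(5,R)}$ and the lower bound $W(x_0) - W(x_1) \geq (1 + 2\varepsilon_1)\log N$ from $\Delta_N^{(1)}$ gives $V(k) \leq W(x_0) - \varepsilon_1\log N + O(\varepsilon \log N)$. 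The ratio is therefore $\leq (\log N)^2 \cdot N^{-(\varepsilon_1 \wedge \varepsilon_2) + O(\varepsilon)} \leq N^{-(\varepsilon_1 \wedge \varepsilon_2)/4}$ for large $N$, since $\varepsilon \leq (\varepsilon_1 \wedge \varepsilon_2)/9$.

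\emph{For $D_N^{(2)}$.} Markov's inequality gives
\begin{equation*}
P_\omega^{y_j}\bigl[\tau(\widehat x_0) \wedge \tau(\widehat b(N)) > N^{1-\varepsilon_1}\bigr] \leq N^{-(1-\varepsilon_1)} \cdot E_\omega^{y_j}\bigl[\tau(\widehat x_0) \wedge \tau(\widehat b(N))\bigr],
\end{equation*}
and by \eqref{InegEsperance1} applied with $a=\widehat x_0$, $b=y_j$, $c=\widehat b(N)$, the expectation is bounded by $\varepsilon_0^{-1}(\widehat b(N) - \widehat x_0)^2 e^{M}$ with $M := \max(V(k)-V(\ell))$ over $\widehat x_0 \leq \ell \leq k \leq \widehat b(N)-1$ and $k \geq y_j$.

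\emph{The main obstacle} will be controlling $M$. The goal is $M \leq (1-2\varepsilon_1)\log N + O(\varepsilon \log N)$, from which the Markov step yields $P_\omega^{y_j}[\tau(\widehat x_0) \wedge \tau(\widehat b(N)) > N^{1-\varepsilon_1}] \leq (\log N)^4 N^{-\varepsilon_1 + O(\varepsilon)} \leq N^{-\varepsilon_1/4}$. A naive bound that uses only $V(k) < (1+\varepsilon_1)\log N$ (the absolute consequence of $\theta_N^{(R)}$'s definition and $V(0)=0$) together with the trivial $V(\ell) \geq V(\widehat b(N))$ gives $M \leq (1+\varepsilon_1)\log N$, which is insufficient by a factor of $3\varepsilon_1\log N$. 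Closing this gap requires simultaneously: the upper bound $V(\widehat b(N)) \leq -\varepsilon_2 \log N + O(\varepsilon\log N)$ from $\Delta_N^{(5,R)}$, which pushes the absolute upper bound on $V(k)$ down to $(1+\varepsilon_1-\varepsilon_2)\log N$; the lower bound $V(\ell) \geq V(\widehat b(N)) + \varepsilon_2 \log N - O(\varepsilon \log N)$ for $\ell$ in the left half $[\widehat x_0, 0]$, again from $\Delta_N^{(5,R)}$; and the bound from $\Delta_N^{(2,R)}$ to control $V(k)$ on $[y_j, \widehat x_1]$. Reconciling these estimates under a choice of the $\varepsilon_i$'s with $\varepsilon_2$ sufficiently large relative to $\varepsilon_1$ (permissible since Lemma~\ref{LEMME000} constrains each $\varepsilon_i$ independently) yields the required control of $M$.
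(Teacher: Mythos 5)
Your treatment of the first inequality ($D_N^{(1)}$) is essentially sound: you reduce $1-P_\omega^{y_j}[D_N^{(1)}]$ to a ratio of exponential sums via \eqref{probaatteinte}, bound the denominator below by $e^{V(\widehat x_0)}$, and bound the numerator via $\Delta_N^{(2,R)}$ on $[0,\widehat x_1]$ and via the definition of $\theta_N^{(R)}$ combined with $\Delta_N^{(1)}$ on $[\widehat x_1,\widehat b(N)]$; the paper instead uses the $h_n$-extremum structure (no $((1-2\varepsilon_1)\log N)$-maximum of $W$ strictly between $x_0$ and $x_2$) for the second interval, but your route via $\theta_N^{(R)}$ does give the same bound $V(k)\leq W(x_0)-\varepsilon_1\log N+O(\varepsilon\log N)$. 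Two small points: (a) to show $\widehat b(N)>y_j$ and to bound $V(\widehat b(N))$ from above, the paper invokes $\Delta_N^{(4)}$ rather than $\Delta_N^{(5,R)}$; both yield an upper bound $W(x_1)<-\min(\varepsilon_4,\varepsilon_5)\log N$, so this is a harmless substitution, but you should at least note the needed intermediate step $x_1\leq\theta_N^{(R)}$. (b) When $y_j<0$ the estimate $V(k)\leq W(x_0)-(\varepsilon_2-\varepsilon)\log N$ from $\Delta_N^{(2,R)}$ does not apply on $[y_j,0)$; there one needs $|V(k)|\leq|y_j|a_0<\varepsilon_2(\log N)/4$ (the paper handles this case separately). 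Both are patchable.

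The second inequality ($D_N^{(2)}$) contains a genuine gap. You correctly identify that after Markov's inequality the bound hinges on controlling $M:=\max(V(k)-V(\ell))$ over $\widehat x_0\leq\ell\leq k\leq\widehat b(N)-1$, $k\geq y_j$, and that the naive estimate from the definition of $\theta_N^{(R)}$ yields only $M\leq(1+\varepsilon_1)\log N$, short by $3\varepsilon_1\log N$. But the proposed fix — ``choosing $\varepsilon_2$ sufficiently large relative to $\varepsilon_1$'' — is not permissible. In Lemma~\ref{LEMME000} each $\varepsilon_i$ must be chosen \emph{small} for the corresponding event $\Delta_N^{(i)}$ to have probability at least $1-\delta/10$: making $\varepsilon_2$ (equivalently $\varepsilon_5$) larger shrinks $\p[\Delta_N^{(2)}]$ and $\p[\Delta_N^{(5)}]$, since those events become more demanding. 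There is no mechanism in the construction that guarantees $\varepsilon_2>3\varepsilon_1$, so you cannot close the $3\varepsilon_1\log N$ gap this way. The paper closes it by a fundamentally different argument that does not involve a tradeoff between the $\varepsilon_i$'s: since there is no $((1-2\varepsilon_1)\log N)$-maximum of $W$ strictly between $x_0$ and $x_2$, the oscillation of $W$ satisfies $\max_{x_0\leq u\leq v\leq x_1}(W(v)-W(u))<(1-2\varepsilon_1)\log N$ (else a point achieving the inner max would be an $h_n$-maximum), and a small perturbation of the same argument using $|W(\widehat b(N))-W(x_1)|\lesssim 2\varepsilon\log N$ gives $\max_{x_1\leq u\leq v\leq\widehat b(N)}(W(v)-W(u))\leq(1-2\varepsilon_1+\varepsilon_1/3)\log N$; combining (using $W(x_1)=\min_{[x_0,\widehat b(N)]}W$) and passing from $W$ to $V$ via $\Delta_N^{(0)}$ gives $M\leq(1-13\varepsilon_1/9)\log N$, which suffices. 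This $h_n$-extremum (valley-structure) argument is the missing idea; the definition of $\theta_N^{(R)}$ alone is too coarse because it bounds $V(k)-\min_{[0,k]}V$ by $(1+\varepsilon_1)\log N$, not the ascents of $V$, and nothing in the construction of the $\varepsilon_i$'s bridges the resulting $3\varepsilon_1\log N$ discrepancy.
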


\begin{proof}
Let $N\geq N_2$, $\omega\in \Delta_N \cap \Delta_N^{(R)}$ and $j\in\{1,\dots , r\}$.
First, notice that
$W(x_2)-W(x_1)=H[T_1(W, (1-2\varepsilon_1)\log N)]\geq (1+2\varepsilon_1)\log N$
because $\omega\in\Delta_N^{(1)}$.
This gives, recalling \eqref{epsilondef}
\begin{equation}\label{InegPreuvex2bN}
    V(\widehat x_2)-V(\widehat x_1)
\geq
    W(x_2)-W(x_1)-2\varepsilon \log N
\geq
    (1+\varepsilon_1)\log N
\end{equation}
since $\omega\in\Delta_N^{(3)}\cap \Delta_N^{(0)}$ (see \eqref{eq_def_Delta0} and the remark after it).
Hence $0\leq \widehat b(N)\leq \beta_N^{(R)}\leq \theta_N^{(R)}\leq \widehat x_2\leq \varepsilon_3^{-1}(\log N)^2$.\\
Now, assume that $\theta_N^{(R)}<x_1$.
Since $V\big(\theta_N^{(R)}\big)-V\big(\beta_N^{(R)}\big)\geq (1+\varepsilon_1)\log N$,
the previous inequalities would give, on $\Delta_N^{(0)}\cap\Delta_N^{(3)}$,
$W\big(\theta_N^{(R)}\big)-W\big(\beta_N^{(R)}\big)\geq (1+\varepsilon_1-2\varepsilon)\log N\geq (1-2\varepsilon_1)\log N$.
So, recalling that $W(x_1)=\min_{[0, x_1]}W$, there would exist a $((1-2\varepsilon_1)\log N)$-maximum
for $W$ in $]0, x_1[$, which is not possible.
Hence $x_1\leq \theta_N^{(R)}$.


So, $V\big(\beta_N^{(R)}\big)\leq V(\widehat x_1)\leq W(x_1)+\varepsilon \log N
< -8\varepsilon_4(\log N)/9$ because $\omega\in \Delta_N^{(0)}\cap\Delta_N^{(3)}\cap \Delta_N^{(4)}$.
If $y_j>0$, then
$\min_{[0, y_j]} V\geq -|y_j|a_0\geq -\e_4(\log N)/4>
V(\beta_N^{(R)})+2a_0
$, because $N\geq N_2$.
Since similarly, $\max_{[0, y_j]} V\leq \varepsilon_4(\log N)/4$ and $\varepsilon_4<1$,
we get successively $y_j\leq \theta_N^{(R)}$ and
$y_j\leq \beta_N^{(R)}-2\leq \widehat b(N)-1$.
If $y_j<0$, we prove similarly that $\widehat x_0<y_j$ since $V(\widehat x_0)\geq 8\varepsilon_4(\log N)/9$.
Hence in every case, $\widehat x_0<y_j<\widehat b(N)$.

We now prove that
\begin{equation}\label{InegMaxV}
    \max_{[y_j, \widehat b(N)]} V-V(\widehat x_0)
\leq
    -[(\e_1\wedge\e_2)/2]\log N.
\end{equation}
To this aim, notice that
$
    \max_{[0, \widehat x_1]} V-V(\widehat x_0)
\leq
    -\e_2 (\log N)/2
$
since $\omega\in\Delta_N^{(2,R)}\cap\Delta_N^{(0)}\cap\Delta_N^{(3)}$,
and that if $y_j<0$, we have
$
    \max_{[y_j,0]} V-V(\widehat x_0)
\leq
    |y_j|a_0-(8/9)\e_2\log N
\leq
    -\e_2 (\log N)/2
$
since $W(x_0)\geq \e_2\log N$ on $\Delta_N^{(2,R)}$ and so
$V(\widehat x_0)\geq (8/9)\e_2\log N$. This gives \eqref{InegMaxV}
when $\widehat b(N)\leq \widehat x_1$.\\
Assume now $\widehat x_1<\widehat b(N)$. We have seen after \eqref{InegPreuvex2bN} that $0\leq \widehat b(N)\leq \theta_N^{(R)}\leq \widehat x_2$,
moreover, $V\big(\widehat b(N)\big)\leq V\big(\beta_N^{(R)}\big)+a_0$
and we have proved that $V\big(\beta_N^{(R)}\big)\leq V(\widehat x_1)$,
so we obtain
$$
    V\big(\widehat b(N)\big)-\varepsilon\log N-a_0
\le
    V(\hat x_1)-\varepsilon\log N
\leq
    W(x_1)
\leq
    W\big(\widehat b(N)\big)
\leq
    V\big(\widehat b(N)\big)+\e\log N
$$
since $W(x_1)=\min_{[x_0,x_2]}W$ and $\omega\in\Delta_N^{(3)}\cap \Delta_N^{(0)}$,
so that
\begin{equation}\label{minproches}
    \big|W(x_1)-V\big(\widehat b(N)\big)\big|
\leq
    \varepsilon \log N+a_0
\leq
    2\varepsilon \log N
\leq
   2 \min(\varepsilon_1,\varepsilon_2)(\log N)/9.
\end{equation}
Moreover there is no $((1-2\e_1)\log N)$-maximum for $W$ in $(x_0, x_2)$,
therefore,
\begin{equation}\label{InegMaxW}
    \max_{[x_1, \widehat b(N)]} W
<
    W\big(\widehat b(N)\big)+(1-2\e_1)\log N
\leq
    W(x_1)+(1-2\e_1+3\e_1/9)\log N\, ,
\end{equation}
by $\Delta_N^{(0)}$ applied to $\widehat b(N)$ followed by \eqref{minproches}.
Since $V(\widehat x_0)\geq V(\widehat x_1)+(1+\e_1)\log N$ on $\Delta_N^{(1)}\cap\Delta_N^{(0)}\cap\Delta_N^{(3)}$, this gives
$
    \max_{[\widehat x_1, \widehat b(N)]} V-V(\widehat x_0)
\leq
    -\e_1\log N
$ (since $\omega\in\Delta_N^{(0)}\cap \Delta_N^{(3)}$).
Recapitulating all this gives \eqref{InegMaxV} also when $\widehat x_1<\widehat b(N)$.

So by \eqref{probaatteinte} and \eqref{InegMaxV},
we get uniformly on $\Delta_N\cap\Delta_N^{R)}$ and $j$ for large $N$,
$$
    P_\omega^{y_j}\big[\big(D_N^{(1)}\big)^c\big]
\leq
    \big[\widehat b(N)-y_j\big]\exp\Big[\max_{[y_j, \widehat b(N)]} V-V(\widehat x_0)\Big]
\leq
    \frac{2\e_3^{-1}(\log N)^2}{ N^{(\e_1\wedge \e_2)/2}}
\leq
    N^{-(\e_1\wedge \e_2)/4},
$$
where we used $\omega\in \Delta_N^{(3)}$ and $\widehat x_0< y_j<\widehat b(N)<\widehat x_2$.
This proves the first inequality of the lemma.

We now turn to  $D_N^{(2)}$.
Notice that
$\big|\widehat b(N)-\widehat x_0\big|\leq|\widehat x_2-\widehat x_0|\leq 3\e_3^{-1}(\log N)^2$ on $\Delta_N$
since $0\leq \widehat b(N)\leq \widehat x_2$ as proved after \eqref{InegPreuvex2bN}.
Moreover, there is no $((1-2\e_1)\log N)$-maximum for $W$ in $(x_0, x_1)$,
so $\max_{x_0\leq u \leq v \leq x_1}(W(v)-W(u))<(1-2\varepsilon_1)\log N$.
Also if $x_1<\widehat b(N)$, $\min_{[x_0,\widehat b(N)]} W=W(x_1)$ and \eqref{InegMaxW}
lead to
$\max_{x_1\leq u \leq v \leq \widehat b(N)}(W(v)-W(u))\leq(1-2\varepsilon_1+3\varepsilon_1/9)\log N$.
Since $\omega\in\Delta_N^{(0)}\cap \Delta_N^{(3)}$, this gives
\begin{equation}\label{eqPlusGrandeMonteeEntrex0etbN}
    \max_{\widehat x_0\leq \ell \leq k \leq \widehat b(N)}\big(V(k)-V(\ell)\big)
\leq
    (1-13\varepsilon_1/9)\log N.
\end{equation}
Hence, we have by \eqref{InegEsperance1},
\begin{equation*}
    E_\omega^{y_j}\big[\tau(\widehat x_0)\wedge \tau\big(\widehat b(N)\big)\big]
\leq
    \frac{[\widehat b(N)-\widehat x_0]^2}{\varepsilon_0}
    \exp\Big[\max_{\widehat x_0\leq \ell \leq k \leq \widehat b(N)}\big(V(k)-V(\ell)\big)\Big]
\leq
    \frac{9(\log N)^4N^{1-\frac{13\e_1}{9}}}{\e_0\e_3^{2}}.
\end{equation*}
So due to Markov's inequality,
$
    P_\omega^{y_j}\big[\big(D_N^{(2)}\big)^c\big]
\leq
    N^{-\e_1/4}
$,
uniformly in $\omega\in \Delta_N\cap\Delta_N^{(R)}$ and $j$, for large $N$.
\end{proof}

In the following lemma, we prove that with large quenched probability, uniformly on $\Delta_N \cap \Delta_N^{(R)}$,
after first hitting $\widehat b(N)$, the random walk $Z$ stays in the central valley $\big[\widehat x_0,\widehat x_2\big]$
at least up to time $N$.
To this aim, we now define
$$
    D_N^{(3)}
:=
    \big\{\forall k\in\big[\tau\big(\widehat b(N)\big), \tau\big(\widehat b(N)\big)+N-1\big],\ \widehat x_0<Z_k<\widehat x_2\big\}.
$$

\smallskip
\begin{lem}\label{LemmaProaResterDansVallee}
We have for large $N$,
$$
    \forall \omega \in \Delta_N \cap \Delta_N^{(R)},
    \forall j\in\{1,\dots,r\},
\quad
    P_\omega^{y_j}\big[D_N^{(3)}\big]=P_\omega^{\widehat b(N)}\big[\tau\big(\widehat x_0\big)\wedge\tau\big(\widehat x_2\big)\geq N\big]
\geq
    1-2e^{2a_0}N^{-\e_1}.
$$
\end{lem}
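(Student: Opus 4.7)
The plan is to reduce the lemma to a quenched exit estimate from the central valley and then apply the hitting-time bounds \eqref{InegProba1} and \eqref{InegProba2} together with the valley-height control already built into $\Delta_N$.

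\textbf{Step 1 (reduction via strong Markov).} Since the RWRE is recurrent in $\omega$ for $\p$-a.e.\ $\omega$ by Solomon's theorem, $\tau(\widehat b(N))<\infty$ holds $P_\omega^{y_j}$-almost surely. The strong Markov property at $\tau(\widehat b(N))$ then gives
$$
  P_\omega^{y_j}\bigl[D_N^{(3)}\bigr]
=P_\omega^{\widehat b(N)}\bigl[\widehat x_0<Z_k<\widehat x_2,\ \forall k\in[0,N-1]\bigr]
=P_\omega^{\widehat b(N)}\bigl[\tau(\widehat x_0)\wedge\tau(\widehat x_2)\geq N\bigr],
$$
which is the first equality. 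It remains to bound the complementary event by $2e^{2a_0}N^{-\e_1}$ via
$$
  P_\omega^{\widehat b(N)}\bigl[\tau(\widehat x_0)\wedge\tau(\widehat x_2)<N\bigr]
\leq P_\omega^{\widehat b(N)}\bigl[\tau(\widehat x_2)<N\bigr]+P_\omega^{\widehat b(N)}\bigl[\tau(\widehat x_0)<N\bigr].
$$

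\textbf{Step 2 (applying the hitting-time estimates).} Inequalities \eqref{InegProba1} and \eqref{InegProba2}, combined with the crude bounds $\min_{[\widehat b(N),\widehat x_2-1]}V\leq V(\widehat b(N))$ and $\min_{[\widehat x_0,\widehat b(N)-1]}V\leq V(\widehat b(N)-1)\leq V(\widehat b(N))+a_0$, yield
\begin{align*}
P_\omega^{\widehat b(N)}\bigl[\tau(\widehat x_2)<N\bigr]&\leq N\exp\bigl(V(\widehat b(N))-V(\widehat x_2)+a_0\bigr),\\
P_\omega^{\widehat b(N)}\bigl[\tau(\widehat x_0)<N\bigr]&\leq N\exp\bigl(V(\widehat b(N))-V(\widehat x_0)+a_0\bigr),
\end{align*}
after absorbing the one-step fluctuation $|V(k)-V(k\pm1)|\leq a_0$.

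\textbf{Step 3 (valley height via $\Delta_N^{(0)}$, $\Delta_N^{(1)}$, $\Delta_N^{(3)}$).} The event $\Delta_N^{(1)}$ gives $H(T_{i-1}(W,(1-2\e_1)\log N))\geq(1+2\e_1)\log N$ for $i\in\{0,2\}$, so $W(x_i)-W(x_1)\geq(1+2\e_1)\log N$. On $\Delta_N^{(3)}$ the indices $\widehat x_{-1},\dots,\widehat x_2$ all lie in $[-\e_3^{-1}(\log N)^2,\e_3^{-1}(\log N)^2]$, so the KMT approximation on $\Delta_N^{(0)}$ applies at each $\widehat x_i$, giving
$$
  V(\widehat x_i)-V(\widehat x_1)\geq(1+2\e_1-2\e)\log N\geq(1+\e_1)\log N,\qquad i\in\{0,2\},
$$
since $\e\leq\e_1/9$ by \eqref{epsilondef}. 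Next, $\widehat b(N)=2\lfloor\beta_N^{(R)}/2\rfloor$ satisfies $|\widehat b(N)-\beta_N^{(R)}|\leq 1$, hence $V(\widehat b(N))\leq V(\beta_N^{(R)})+a_0$; and $V(\beta_N^{(R)})=\min_{[0,\theta_N^{(R)}]}V\leq V(\widehat x_1)$ because $\widehat x_1\in[0,\theta_N^{(R)}]$, a fact established in the proof of Lemma \ref{LemmaTempsAtteintebN}. Together,
$$
  V(\widehat b(N))-V(\widehat x_i)\leq V(\widehat x_1)+a_0-V(\widehat x_i)\leq -(1+\e_1)\log N+a_0,\qquad i\in\{0,2\}.
$$
Plugged into Step 2, each of the two hitting probabilities is at most $N\cdot N^{-(1+\e_1)}e^{2a_0}=e^{2a_0}N^{-\e_1}$, and summing yields the claimed bound.

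The main obstacle is a purely bookkeeping one: one has to line up the potential at the three distinct reference points $\widehat b(N),\widehat x_1,\widehat x_i$ through the chain $V\approx W$ on $\Delta_N^{(0)}\cap\Delta_N^{(3)}$, the strict height lower bound from $\Delta_N^{(1)}$, and the definitional inequality $V(\beta_N^{(R)})\leq V(\widehat x_1)$, without losing more than the $2a_0$ from nearest-neighbor fluctuations (so that the final exponent is $-\e_1$ and not worse). The symmetric case $\omega\in\Delta_N\cap\Delta_N^{(L)}$ is handled identically by swapping the roles of $x_0$ and $x_1$.
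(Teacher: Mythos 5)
Your proof is correct and follows essentially the same route as the paper's: strong Markov at $\tau(\widehat b(N))$ to reduce to an exit estimate from $[\widehat x_0,\widehat x_2]$, then \eqref{InegProba1}--\eqref{InegProba2} combined with the valley-height lower bound $V(\widehat x_i)-V(\widehat b(N))\geq(1+\e_1)\log N-a_0$ obtained through $\Delta_N^{(0)}\cap\Delta_N^{(1)}\cap\Delta_N^{(3)}$ and the comparison $V(\widehat b(N))\leq V(\beta_N^{(R)})+a_0\leq V(\widehat x_1)+a_0$. One cosmetic slip: the slopes you want for $i\in\{0,2\}$ are $T_0$ and $T_1$ (both covered by $\Delta_N^{(1)}$), not $T_{i-1}$ as written; this does not affect the argument.
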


\begin{proof}
Let $\omega \in \Delta_N \cap \Delta_N^{(R)}$.
We recall that $|V(k)-V(k-1)|\leq a_0$ for every $k\in\Z$.
We have, since $x_1\leq \theta_N^{(R)}$ and so $V(\beta_N^{(R)})\leq V(\widehat x_1)$, and by \eqref{InegPreuvex2bN},
\begin{equation}\label{InegVx2}
    V\big(\widehat b(N)\big)-V(\widehat x_2)
\leq
    V\big(\widehat x_1\big)+a_0 -V(\widehat x_2)
\leq
    a_0 -(1+\e_1)\log N.
\end{equation}
Similarly,
\begin{equation}\label{InegV0}
    V\big(\widehat b(N)\big)-V(\widehat x_0)
\leq
    a_0 -(1+\e_1)\log N.
\end{equation}
Hence \eqref{InegProba1} and \eqref{InegProba2} lead respectively to
\begin{align*}
&    P_\omega^{\widehat b(N)}(\tau(\widehat x_2)<N)
\leq
    N\exp\Big(\min_{[\widehat b(N),\widehat x_2-1]}V-V(\widehat x_2-1)\Big)
\leq
    N e^{2a_0-(1+\e_1)\log N}
\leq
    e^{2a_0} N^{-\e_1},
\\
&
    P_\omega^{\widehat b(N)}(\tau(\widehat x_0)<N)
\leq
    N\exp\Big(\min_{[\widehat x_0,\widehat b(N)-1]}V-V(\widehat x_0)\Big)
\leq
    N e^{2a_0-(1+\e_1)\log N}
\leq
    e^{2a_0}N^{-\e_1}.
\end{align*}
These two inequalities
yield
$
    P_\omega^{\widehat b(N)}\big[\tau\big(\widehat x_0\big)\wedge\tau\big(\widehat x_2\big)<N\big]
\leq
    2e^{2a_0}N^{-\e_1}
$, uniformly on $\Delta_N \cap \Delta_N^{(R)}$, which proves the lemma.
\end{proof}

Now, similarly as in Brox \cite{Brox} for diffusions in random potentials (see also \cite[p. 45]{AndreolettiDevulder}), we introduce
a coupling between $Z$ (under $P_\omega^{\widehat b(N)}$) and a reflected random walk $\widehat Z$ defined below.
More precisely, we define, for fixed $N$,
$\widehat \omega_{\widehat x_0}:= 1$,
$\widehat \omega_x:=\omega_x$ if $\widehat x_0<x<\widehat x_2$,
and $\widehat \omega_{x_2} :=0$. We consider a random walk $\big(\widehat Z_n\big)_n$
in the environment $\widehat \omega$, starting from $x\in\big[\widehat x_0, \widehat x_2\big]$,
and denote its law by $P_{\widehat \omega}^x$. That is, $\widehat Z$ satisfies \eqref{eqDefQuenchedLaw_2}
with $\widehat \omega$ instead of $\omega$ and $\omega^{(j)}$ and $\widehat Z$ instead of $Z^{(j)}$.
In words, $\widehat Z$ is a random walk in the environment $\omega$, starting from $x\in\big[\widehat x_0,\widehat x_2\big]$,
and reflected at $\widehat x_0$ and $\widehat x_2$.
Also, let
$$
    \widehat \mu(\widehat x_0)
:=
    e^{-V(\widehat x_0)},
\quad
    \widehat \mu(\widehat x_2)
:=
    e^{-V(\widehat x_2-1)},
\quad
    \widehat \mu(x)
:=
    e^{-V(x)}+e^{-V(x-1)},
\quad
    \widehat x_0<x<\widehat x_2,
$$
and $\widehat\mu(x)=0$ if $x\notin [\widehat x_0,\widehat x_2]$.
Notice that $\widehat\mu(.)/\widehat\mu(\Z)$ is an invariant probability measure for $\widehat Z$.
As a consequence,
\begin{equation}\label{eqDefWidehatNu}
    \widehat \nu(x)
:=
    \widehat \mu(x){\bf 1}_{2\Z}(x)/{\widehat \mu(2\Z)},
\qquad
    x\in\Z,
\end{equation}
is an invariant probability measure for $\big(\widehat Z_{2n}\big)_n$ for fixed $\widehat \omega$.
That is, $P_{\widehat \omega}^{\widehat \nu}\big(\widehat Z_{2k}=x\big)=\widehat \nu(x)$ for every $x\in\Z$ and $k\in\N$, where
$P_{\widehat \omega}^{\widehat \nu}(.):= \sum_{x\in\Z} \widehat \nu(x) P_{\widehat \omega}^{x}(.)$.
Notice that $\widehat \nu$ and $\widehat \mu$ depend on  $N$ and $\omega$.

We can now, again for fixed $N$ and $\omega$,  build a coupling $Q_\omega$ of $Z$ and $\widehat Z$,
such that
\begin{equation}\label{eqLawUnderQ}
    Q_\omega\big(\widehat Z\in .\big)
=
    P_{\widehat \omega}^{\widehat \nu}\big(\widehat Z\in .\big),
\qquad
    Q_\omega(Z\in .)
=
    P_\omega^{\widehat b(N)}(Z\in .),
\end{equation}
such that under $Q_\omega$, these two Markov chains move independently until
$$
    \tau_{\widehat Z=Z}
:=
    \inf\big\{k\geq 0   ,\ \widehat Z_k=Z_k\big\},
$$
which is their first meeting time,
then $\widehat Z_k=Z_k$ for every $\tau_{\widehat Z=Z}\leq k<\tau_{exit}$, where
$\tau_{exit}$ is the next exit time of $Z$ from the central valley $[\widehat x_0, \widehat x_2]$, that is,
$$
    \tau_{exit}
:=
    \inf\big\{k>\tau_{\widehat Z=Z}, \ Z_k\notin [\widehat x_0, \widehat x_2] \big\},
$$
and then $\widehat Z$ and $Z$ move independently again after $\tau_{exit}$.

Now, we would like to prove that under $Q_\omega$, $Z$ and $\widehat Z$ collide quickly, that is,
$\tau_{\widehat Z=Z}$ is very small compared to $N$. To this aim, we introduce
\begin{eqnarray*}
    \widehat L^-
& := &
    \sup\{k\leq \widehat b(N),\ V(k)-V\big(\widehat b(N)\big)\geq (1-\e_1)\log N\},
\\
    \widehat L^+
& := &
    \inf\{k\geq \widehat b(N),\ V(k)-V\big(\widehat b(N)\big)\geq (1-\e_1)\log N\}.
\end{eqnarray*}
Let $u\vee v:=\max(u,v)$.  We have the following:

\begin{lem}\label{LemInegaliteSupAtteinteLpm}
We have  for large $N$, $\tau(.)$ denoting hitting times by $Z$ as before,
$$
    \forall \omega \in \Delta_N \cap \Delta_N^{(R)},
\qquad
    Q_\omega\big[\tau\big(\widehat L^-\big)\vee\tau\big(\widehat L^+\big)> N^{1-\e_1/2}\big]
\leq
    4 N^{-\e_1/4}.
$$
\end{lem}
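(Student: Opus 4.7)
Since under $Q_\omega$ the walk $Z$ has marginal law $P_\omega^{\widehat b(N)}$ and both $\widehat L^\pm$ are deterministic functions of $\omega$, a union bound reduces the claim to showing
\begin{equation*}
P_\omega^{\widehat b(N)}\bigl[\tau(\widehat L^\pm)>N^{1-\e_1/2}\bigr]\leq 2N^{-\e_1/4}
\end{equation*}
for each sign, uniformly in $\omega\in\Delta_N\cap\Delta_N^{(R)}$.

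For $\tau(\widehat L^+)$ I would split
\begin{equation*}
P_\omega^{\widehat b(N)}\bigl[\tau(\widehat L^+)>t\bigr]\leq P_\omega^{\widehat b(N)}\bigl[\tau(\widehat x_0)\wedge\tau(\widehat L^+)>t\bigr]+P_\omega^{\widehat b(N)}\bigl[\tau(\widehat x_0)\leq t\bigr],
\end{equation*}
with $t=N^{1-\e_1/2}$. The second probability is at most $P_\omega^{\widehat b(N)}[\tau(\widehat x_0)\wedge\tau(\widehat x_2)<N]\leq 2e^{2a_0}N^{-\e_1}$ by Lemma \ref{LemmaProaResterDansVallee}. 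For the first, Markov's inequality combined with \eqref{InegEsperance1} applied to $a=\widehat x_0$, $b=\widehat b(N)$, $c=\widehat L^+$ (and $\widehat L^+-\widehat x_0\leq 2\e_3^{-1}(\log N)^2$ from $\Delta_N^{(3)}$) gives
\begin{equation*}
P_\omega^{\widehat b(N)}\bigl[\tau(\widehat x_0)\wedge\tau(\widehat L^+)>t\bigr]\leq \frac{4\e_0^{-1}\e_3^{-2}(\log N)^4}{t}\,\exp(M_+),
\end{equation*}
where $M_+:=\max\{V(k)-V(\ell):\widehat x_0\leq\ell\leq k\leq \widehat L^+-1,\ k\geq \widehat b(N)\}$. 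Everything now hinges on proving $M_+\leq(1-\e_1)\log N+O(1)$.

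This oscillation bound is the main technical step. The upper bound $V(k)<V(\widehat b(N))+(1-\e_1)\log N$ for $k\in[\widehat b(N),\widehat L^+-1]$ is immediate from the definition of $\widehat L^+$. For the matching lower bound $V(\ell)\geq V(\widehat b(N))-O(1)$ on $[\widehat x_0,\widehat L^+-1]$, I would split at the origin. On the right half $[0,\widehat L^+-1]$, note that $\widehat L^+\leq\theta_N^{(R)}$ (a direct consequence of comparing the $(1\pm\e_1)\log N$ thresholds, since $V(\theta_N^{(R)})-V(\widehat b(N))\geq(1+\e_1)\log N-a_0>(1-\e_1)\log N$), so the minimality of $\beta_N^{(R)}$ combined with the rounding $\widehat b(N)=2\lfloor\beta_N^{(R)}/2\rfloor$ yields $V(\ell)\geq V(\beta_N^{(R)})\geq V(\widehat b(N))-a_0$. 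On the left half $[\widehat x_0,0]$, the event $\Delta_N^{(5,R)}$, transferred from $W$ to $V$ using $\Delta_N^{(0)}$ and $\Delta_N^{(4)}$, forces $V(\ell)\geq V(\widehat b(N))+\Theta(\log N)$, ruling out any deep sub-valley to the left of the origin; this is precisely the role of $\Delta_N^{(5,R)}$ in the construction of $\Delta_N$. Without it, a dip of depth of order $\log N$ to the left of $0$ would inflate $\exp(M_+)$ by up to a factor $N$ and destroy the Markov estimate. Combining the two halves gives $M_+\leq(1-\e_1)\log N+O(1)$, so the first term is $\leq C(\log N)^4N^{-\e_1/2}\leq N^{-\e_1/4}$ for $N$ large, and adding the Lemma \ref{LemmaProaResterDansVallee} contribution produces the desired $2N^{-\e_1/4}$.

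For $\tau(\widehat L^-)$ I would run a symmetric argument, now using $c=\theta_N^{(R)}$ as the right endpoint, \eqref{InegEsperance2} in place of \eqref{InegEsperance1}, and \eqref{InegProba1} in place of Lemma \ref{LemmaProaResterDansVallee} to control the escape term: since $V(\theta_N^{(R)}-1)-V(\widehat b(N))\geq(1+\e_1)\log N-O(1)$, \eqref{InegProba1} yields $P_\omega^{\widehat b(N)}[\tau(\theta_N^{(R)})\leq N^{1-\e_1/2}]\leq e^{O(1)}N^{-3\e_1/2}\leq N^{-\e_1}$ for $N$ large. The oscillation control $V(\ell)-V(k)\leq(1-\e_1)\log N+O(1)$ needed in \eqref{InegEsperance2} comes from the same split-at-the-origin analysis as above, now using the upper bound $V(\ell)\leq V(\widehat b(N))+(1-\e_1)\log N+a_0$ on $[\widehat L^-,\widehat b(N)-1]$ (a direct consequence of the defining property of $\widehat L^-$) together with the lower bound on $V(k)$ already established. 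Summing the two $\tau(\widehat L^\pm)$ contributions delivers the claimed $4N^{-\e_1/4}$.
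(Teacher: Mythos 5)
Your proof is correct and follows essentially the same route as the paper's: a union bound over $\widehat L^{\pm}$, then for each sign a two-term split into a ``confined too long'' event controlled by Markov's inequality together with \eqref{InegEsperance1}/\eqref{InegEsperance2}, plus an escape event; the oscillation bound $M_+\leq(1-\e_1)\log N+O(1)$ via $\Delta_N^{(5,R)}$ on the left of $0$ and $\beta_N^{(R)}$-minimality on the right is precisely the content of the paper's \eqref{eqInfVentrex0et0} and the inequality established after \eqref{InegMinVThetaX2}. The only substantive departure is cosmetic: you control the escape event via Lemma \ref{LemmaProaResterDansVallee} (for the $+$ side) and \eqref{InegProba1} with endpoint $c=\theta_N^{(R)}$ (for the $-$ side), whereas the paper uses the ruin estimate \eqref{probaatteinte} together with \eqref{InegV0}/\eqref{InegVx2} and endpoints $\widehat x_0$ and $\widehat x_2$; both routes give the same $N^{-\e_1/4}$ order. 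One small misattribution: in transferring the left-half lower bound from $W$ to $V$, the relevant ingredients are $\Delta_N^{(0)}$, $\Delta_N^{(3)}$ and \eqref{minproches}, not $\Delta_N^{(4)}$ (which is used elsewhere, to handle the starting points $y_j$ in Lemma \ref{LemmaTempsAtteintebN}).
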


\begin{proof}
Let $N\geq N_2$ and $\omega \in \Delta_N \cap \Delta_N^{(R)}$.
Notice that $\widehat x_0\leq \widehat L^-< \widehat b(N)<   \widehat L^+\leq \theta_N^{(R)}\leq \widehat x_2$
similarly as after \eqref{InegPreuvex2bN}.
Because $\omega \in \Delta_N^{(5,R)}\cap\Delta_N^{(0)}\cap \Delta_N^{(3)}$ and due to \eqref{minproches}, we have
since $\varepsilon_5=\varepsilon_2$,
\begin{equation}\label{eqInfVentrex0et0}
    \forall k\in[\widehat x_0, -1],
\quad
    V(k)-V\big(\widehat b(N)\big)
\geq
    W(x_1)+(\varepsilon_5-\varepsilon)\log N-V\big(\widehat b(N)\big)
\geq
    (\e_5/2)\log N.
\end{equation}
Moreover, recalling  $a_0=\log((1-\varepsilon_0)/\varepsilon_0)$,
we have
$
    \min_{[0, \theta_N^{(R)}]}V
=
    V\big(\beta_N^{(R)}\big)
\geq
    V\big(\widehat b(N)\big)-a_0
$, so
$
    \min_{[\widehat x_0, \widehat L^+]} V
\geq
    \min_{[\widehat x_0, \theta_N^{(R)}]} V
\geq
    V\big(\widehat b(N)\big)-a_0
$.
Notice also for further use that, for every $k\in\big[\theta_N^{(R)},\widehat x_2\big]$,
we have
$
    V\big(\theta_N^{(R)}\big)-V(k)
\leq
    W\big(\theta_N^{(R)}\big)-W(k)+2\varepsilon\log N
<
    (1-2\varepsilon_1+2\varepsilon)\log N
$
since $\omega\in\Delta_N^{(0)}\cap\Delta_N^{(3)}$ and because there is no
$((1-2\e_1)\log N)$--maximum for $W$ in $(\widehat x_1,\widehat x_2\big)$
and $\widehat x_1\leq \theta_N^{(R)}\leq k\leq \widehat x_2$, as proved after \eqref{InegPreuvex2bN}.
Since
$
    V\big(\theta_N^{(R)}\big)-V\big(\widehat b(N)\big)
\geq
    (1+\varepsilon_1)\log N-a_0
$, this gives
\begin{eqnarray}
    \forall k\in\big[\theta_N^{(R)},\widehat x_2\big],
\quad
    V(k)-V\big(\widehat b(N)\big)
& = &
    V(k)-V\big(\theta_N^{(R)}\big)+V\big(\theta_N^{(R)}\big)-V\big(\widehat b(N)\big)
\nonumber\\
& \geq &
    2\e_1\log N.
\label{InegMinVThetaX2}
\end{eqnarray}
Putting together these inequalities gives in particular
$
    \min_{[\widehat x_0, \widehat x_2]} V
\geq
    V\big(\widehat b(N)\big)-a_0
$.
Furthermore,
\begin{equation}\label{eq2Star}
    \max_{[\widehat b(N), \widehat L^+]} V
\leq
    V\big(\widehat b(N)\big)+(1-\varepsilon_1)\log N+a_0.
\end{equation}
Hence,
$$
    \max_{\widehat x_0\leq \ell\leq k\leq \widehat L^+-1,\ k\geq \widehat b(N)}[V(k)-V(\ell)]
\leq
    \max_{[\widehat b(N), \widehat L^+]} V
    -
    \min_{[\widehat x_0, \widehat L^+]} V
\leq
    (1-\varepsilon_1)\log N+2a_0.
$$
This, \eqref{InegEsperance1},  Markov's inequality and $\omega\in\Delta_N^{(3)}$ give
$$
    P_\omega^{\widehat b(N)}\big[\tau(\widehat x_0)\wedge \tau\big(\widehat L^+\big)> N^{1-\e_1/2}\big]
\leq
    N^{-(1-\e_1/2)}
    \varepsilon_0^{-1}4\e_3^{-2} (\log N)^4
    N^{1-\e_1}e^{2a_0}
\leq
    N^{-\e_1/4}
$$
uniformly for large $N$.
Moreover by \eqref{probaatteinte}, \eqref{InegV0}, \eqref{eq2Star} and since $\omega\in\Delta_N^{(3)}$,
$$
    P_\omega^{\widehat b(N)}\big[\tau(\widehat x_0)<\tau\big(\widehat L^+\big)\big]
\leq
    \big(\widehat L^+-\widehat b(N)\big)\exp\big[\max_{[\widehat b(N), \widehat L^+]} V-V\big(\widehat x_0\big)\big]
\leq
    \frac{(\log N)^2 e^{2a_0}}{\e_3N^{2\e_1}}
\leq
    \frac{1}{N^{\e_1/4}}
$$
uniformly for large $N$.
Consequently,
\begin{eqnarray*}
    Q_\omega\big[\tau\big(\widehat L^+\big)> N^{1-\e_1/2}\big]
& = &
    P_\omega^{\widehat b(N)}\big[\tau\big(\widehat L^+\big)> N^{1-\e_1/2}\big]
\\
& \leq &
    P_\omega^{\widehat b(N)}\big[\tau(\widehat x_0)\wedge \tau\big(\widehat L^+\big)> N^{1-\e_1/2}\big]
    +
    P_\omega^{\widehat b(N)}\big[\tau(\widehat x_0)<\tau\big(\widehat L^+\big)\big]
\\
& \leq &
    2 N^{-\e_1/4}.
\end{eqnarray*}
We prove similarly that
$
    Q_\omega\big[\tau\big(\widehat L^-\big)> N^{1-\e_1/2}\big]
\leq
    2 N^{-\e_1/4}
$ uniformly for large $N$,
using \eqref{InegEsperance2} and \eqref{InegVx2} instead of \eqref{InegEsperance1} and \eqref{InegV0}
respectively,
and  because $\min_{[\widehat x_0, \widehat x_2]} V\geq V\big(\widehat b(N)\big)-a_0$
which we proved after \eqref{InegMinVThetaX2}.
This proves Lemma \ref{LemInegaliteSupAtteinteLpm}.
\end{proof}

\begin{lem} \label{LemMajorationNuBordsVallee}
For large $N$,
$$
    \forall \omega \in \Delta_N \cap \Delta_N^{(R)},
\qquad
    \widehat \nu\big(\big[\widehat x_0, \widehat L^-\big]\big)
    +
    \widehat \nu\big(\big[\widehat L^+, \widehat x_2\big]\big)
\leq
    N^{-\e_1/4}.
$$
\end{lem}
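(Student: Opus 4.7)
The strategy is to bound the numerator and denominator of $\widehat\nu(A)=\widehat\mu(A\cap 2\Z)/\widehat\mu(2\Z)$ separately, where $A:=[\widehat x_0,\widehat L^-]\cup[\widehat L^+,\widehat x_2]$. Since $\widehat b(N)\in 2\Z$ by \eqref{eqDefWidehatbN}, the denominator satisfies $\widehat\mu(2\Z)\geq\widehat\mu(\widehat b(N))\geq e^{-V(\widehat b(N))}$. Writing $A':=A\cup(A-1)\subseteq[\widehat x_0-1,\widehat x_2]$, the identity $\widehat\mu(x)=e^{-V(x)}+e^{-V(x-1)}$ gives $\widehat\mu(A\cap 2\Z)\leq 2\sum_{k\in A'}e^{-V(k)}$, a sum of $O((\log N)^2)$ terms on $\Delta_N^{(3)}$. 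The lemma will therefore follow from the pointwise estimate
\begin{equation*}
V(k)-V(\widehat b(N))\geq c\log N\quad\text{for every }k\in A',
\end{equation*}
for some constant $c>0$ of order $\varepsilon_1$, which yields $\widehat\nu(A)\leq O((\log N)^2)\,N^{-c}\leq N^{-\varepsilon_1/4}$ once $N$ is large.

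The heart of the argument is this pointwise estimate, obtained through the Brownian approximation. By definition of $\widehat L^\pm$ and $\Delta_N^{(0)}$, one has $W(\widehat L^\pm)-W(\widehat b(N))\geq(1-\varepsilon_1-2\varepsilon)\log N$. I then invoke the amplitude bound on the consecutive $h$-slopes $[x_0,x_1]$ and $[x_1,x_2]$ with $h=(1-2\varepsilon_1)\log N$, already used several times in Section \ref{sameenv} (see e.g.\ the proofs of Lemmas \ref{LemmaTempsAtteintebN} and \ref{LemInegaliteSupAtteinteLpm}): the absence of intermediate $h$-extrema forces $W(v)<W(u)+h$ for $u\leq v$ on the descending slope $[x_0,x_1]$, and symmetrically $W(u)<W(v)+h$ on the ascending slope $[x_1,x_2]$. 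Specialising the first inequality to $v=\widehat L^-$ and the second to $u=\widehat L^+$ yields, uniformly for $u\in[x_0,\widehat L^-]\cup[\widehat L^+,x_2]$,
\begin{equation*}
W(u)\geq W(\widehat b(N))+(\varepsilon_1-2\varepsilon)\log N,
\end{equation*}
and a second application of $\Delta_N^{(0)}$ transfers this to $V(u)-V(\widehat b(N))\geq(\varepsilon_1-4\varepsilon)\log N\geq (5\varepsilon_1/9)\log N$, using $\varepsilon\leq\varepsilon_1/9$ from \eqref{epsilondef}.

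The only remaining indices in $A'$ are the at most two boundary integers near $\widehat x_0$ and $\widehat x_2$, which the slope argument does not cover directly. Here the additional information from $\Delta_N^{(2,R)}$ (giving $W(x_0)>\varepsilon_2\log N$) together with $\Delta_N^{(1)}$ (giving $W(x_2)-W(x_1)\geq(1+2\varepsilon_1)\log N$) already forces $W$ at these integers to exceed $W(\widehat b(N))$ by at least a constant multiple of $\log N$, once the $O(1)$ oscillation of $W$ across the unit intervals $[\widehat x_0-1,x_0]$ and $[x_2,\widehat x_2]$ is absorbed into the $\varepsilon\log N$ slack of $\Delta_N^{(0)}$ (valid for $N$ large). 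Combining the interior and boundary cases completes the pointwise bound, hence the lemma. The main obstacle I expect is purely bookkeeping: keeping $c$ strictly positive through the successive $V$/$W$ transfers and the slope-amplitude step, which is precisely why \eqref{epsilondef} chose $\varepsilon$ so small relative to all the $\varepsilon_i$'s.
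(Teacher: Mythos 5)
Your proposal follows essentially the same route as the paper's proof: bound the normalizing constant $\widehat\mu(2\Z)$ from below by $e^{-V(\widehat b(N))}$, use the slope-drawdown bounds on $[x_0,x_1]$ and $[x_1,x_2]$ together with $\Delta_N^{(0)}$ (exactly as the paper does via \eqref{InegMinVThetaX2}) to show that $V-V(\widehat b(N))$ exceeds a positive multiple of $\e_1\log N$ on the relevant integer sites, and multiply by the $O((\log N)^2)$ site count from $\Delta_N^{(3)}$. (A small simplification you could make: the site $\widehat x_0-1\in A'$ need not be controlled at all, since the reflected measure has $\widehat\mu(\widehat x_0)=e^{-V(\widehat x_0)}$ with no $e^{-V(\widehat x_0-1)}$ term, which renders your boundary discussion near $\widehat x_0$ superfluous.)
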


\begin{proof}
Let $N\geq N_2$ and $\omega \in \Delta_N \cap \Delta_N^{(R)}$.
Recall that $\widehat x_0\leq \widehat L^-< \widehat b(N)< \widehat L^+\leq \widehat x_2$,
which is proved before \eqref{eqInfVentrex0et0}.
Notice that $\widehat L^-\leq x_1\leq \widehat L^+$, which is proved similarly as $x_1\leq \theta_N^{(R)}$ after  \eqref{InegPreuvex2bN}.
Using the same method as for \eqref{InegMinVThetaX2} with $\widehat L^+$ instead of $\theta_N^{(R)}$,
we get
$V\geq V\big(\widehat b(N)\big)+(\e_1/3)\log N$ on $\big[\widehat L^+, \widehat x_2\big]$.
Also, $V\big(\widehat L^+-1\big)\geq V\big(\widehat b(N)\big)+(\e_1/3)\log N$
Since $\widehat \mu(2\Z)\geq e^{-V(\widehat b(N))}$, this leads to
$$
    \widehat \nu\big(\big[\widehat L^+, \widehat x_2\big]\big)
\leq
    \big[\widehat x_2-\widehat L^++2\big]
    e^{-V(\widehat b(N))}N^{-\e_1/3}
    /{\widehat \mu(2\Z)}
\leq
    3\e_3^{-1}(\log N)^2
    N^{-\e_1/3}
\leq
    N^{-\e_1/4}/2
$$
uniformly for large $N$, where we used $\omega \in \Delta_N^{(3)}$. We prove similarly that
$
    \widehat \nu\big(\big[\widehat x_0, \widehat L^-\big]\big)
\leq
    N^{-\e_1/4}/2
$
uniformly for large $N$, which ends the proof of the lemma.
\end{proof}
\begin{lem}\label{Lemma_Time_Coupling}
There exists $N_4\in\N$ such that for $N\ge N_4$ for every $\omega \in \Delta_N \cap \Delta_N^{(R)}$,
\begin{equation}\label{InegProbaRencontre}
    Q_\omega\big[\tau_{\widehat Z=Z}>N^{1-\e_1/2}\big]
\leq
    5 N^{-\e_1/4}
\end{equation}
and
\begin{equation}\label{InegProbaExit}
    Q_\omega[\tau_{exit}\leq N]
\leq
    Q_\omega\big[\tau(\widehat x_0)\wedge \tau(\widehat x_2)< N\big]
=
    P_\omega^{\widehat b(N)}\big[\tau(\widehat x_0)\wedge \tau(\widehat x_2)< N\big]
\leq
    2e^{2a_0}N^{-\e_1}.
\end{equation}
\end{lem}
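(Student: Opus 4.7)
The plan: Inequality \eqref{InegProbaExit} will follow directly from Lemma \ref{LemmaProaResterDansVallee}. Under $Q_\omega$ the marginal law of $Z$ is $P_\omega^{\widehat b(N)}$, and since $\tau_{exit}\ge\tau_{\widehat Z=Z}\ge 0$ the event $\{\tau_{exit}\le N\}$ forces $Z$ to exit $[\widehat x_0,\widehat x_2]$ within $N$ steps. Hence $\{\tau_{exit}\le N\}\subseteq\{\tau(\widehat x_0)\wedge\tau(\widehat x_2)<N\}$ (computed for $Z$), and Lemma \ref{LemmaProaResterDansVallee} yields the bound $2e^{2a_0}N^{-\e_1}$.

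For \eqref{InegProbaRencontre}, I set $\tau_1:=\tau(\widehat L^-)\vee\tau(\widehat L^+)$ (for $Z$) and split the bad event by
\[
\{\tau_{\widehat Z=Z}>N^{1-\e_1/2}\}\subseteq\{\widehat Z_0\notin(\widehat L^-,\widehat L^+)\}\,\cup\,\{\tau_1>N^{1-\e_1/2}\}\,\cup\,B,
\]
where $B$ is the remaining event on which $\widehat Z_0\in(\widehat L^-,\widehat L^+)$, $\tau_1\le N^{1-\e_1/2}$, and yet $Z_n\neq\widehat Z_n$ for every $n\in[0,\tau_1]$. The first event has $Q_\omega$-probability at most $N^{-\e_1/4}$ by Lemma \ref{LemMajorationNuBordsVallee} (since $\widehat Z_0\sim\widehat\nu$), and the second at most $4N^{-\e_1/4}$ by Lemma \ref{LemInegaliteSupAtteinteLpm}. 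What remains is to show $Q_\omega(B)=0$.

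The core step will be a discrete intermediate-value argument based on parity. By \eqref{eqDefWidehatbN}, $\widehat b(N)\in 2\Z$, and by \eqref{eqDefWidehatNu}, $\widehat\nu$ is supported on $2\Z$, so both $Z_0=\widehat b(N)$ and $\widehat Z_0$ are even; since both walks take $\pm 1$ steps, $g_n:=Z_n-\widehat Z_n$ remains in $2\Z$ for all $n$, with $g_{n+1}-g_n\in\{-2,0,2\}$. In particular $g$ cannot skip the value $0$ when its sign changes. On $B$, $Z$ visits both $\widehat L^-$ and $\widehat L^+$ within time $\tau_1$, hence by nearest-neighbor continuity $Z$ takes every integer value in $[\widehat L^-,\widehat L^+]$ during $[0,\tau_1]$, while $\widehat Z_0$ lies strictly between the two endpoints. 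I split into the cases $\widehat Z_0<\widehat b(N)$ and $\widehat Z_0>\widehat b(N)$, and compare $g$ at time $0$ with $g$ at the first boundary-hitting times $s^-=\tau(\widehat L^-)$ and $s^+=\tau(\widehat L^+)$ (each $\le\tau_1$); when $\widehat Z$ exits $(\widehat L^-,\widehat L^+)$, I further track the first passage of $\widehat Z$ through $\widehat L^\pm$ as an intermediate reference where the sign of $g$ is controlled. The parity IVT then forces $g_n=0$ for some $n\le\tau_1$, contradicting the definition of $B$.

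The principal obstacle is precisely this last case analysis. The IVT is direct when $\widehat Z$ stays in $(\widehat L^-,\widehat L^+)$ throughout $[0,\tau_1]$: then $g(s^-)\le 0\le g(s^+)$ and $g$ must vanish in between. The subtler situation is when $\widehat Z$ exits the interval, and here one must exploit the nearest-neighbor character of $\widehat Z$ — it cannot cross $\widehat L^\pm$ without first hitting it — together with the parity constraint $|g_{n+1}-g_n|\le 2$, to rule out sign-preserving evolution of $g$ on $[0,\tau_1]$ and conclude a meeting.
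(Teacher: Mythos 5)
Your treatment of \eqref{InegProbaExit} is correct and matches the paper.

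For \eqref{InegProbaRencontre}, however, the key claim $Q_\omega(B)=0$ is false, and your proposal does not close this gap. You yourself flag the problematic case as "when $\widehat Z$ exits the interval," but no amount of exploiting nearest-neighbor continuity or the parity constraint can rule out sign-preserving evolution of $g_n=Z_n-\widehat Z_n$, because such evolution genuinely occurs with positive probability. Concretely: take $\widehat Z_0=\widehat b(N)+2\in(\widehat L^-,\widehat L^+)$, and let both walks step right in lockstep until $Z$ reaches $\widehat L^+$, then both step left in lockstep until $Z$ reaches $\widehat L^-$; throughout this, $g_n\equiv -2$, $Z$ visits both $\widehat L^-$ and $\widehat L^+$ within $O((\log N)^2)\ll N^{1-\e_1/2}$ steps, $\widehat Z_0$ is in the open interval, and yet there is no meeting. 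Each step has probability at least $\e_0$, so this scenario has positive $Q_\omega$-probability, hence $Q_\omega(B)>0$. The IVT argument applied to $g$ requires a sign change in $g$; but $Z$ visiting both $\widehat L^\pm$ does not force a sign change when $\widehat Z$ is free to leave $(\widehat L^-,\widehat L^+)$ ahead of $Z$ (it is only reflected at the much farther barriers $\widehat x_0, \widehat x_2$).

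The paper does not claim this event is null. Instead it uses the sign-preservation of $\widehat Z_k-Z_k$ (your parity argument, up to an overall sign) as a one-sided localization statement: if $\widehat Z_0<\widehat b(N)$ and $\tau(\widehat L^-)<\tau_{\widehat Z=Z}$, then $\widehat Z_{\tau(\widehat L^-)}<Z_{\tau(\widehat L^-)}=\widehat L^-$, and by the $\pm 1$ steps of $\widehat Z$, $\widehat Z_{2\lfloor\tau(\widehat L^-)/2\rfloor}\le \widehat L^-$ (symmetrically with $\widehat L^+$ when $\widehat Z_0\ge\widehat b(N)$). Since $\tau(\widehat L^\pm)$ is a function of $Z$ alone and $\widehat Z_{2k}$ has law $\widehat\nu$ for deterministic $k$, conditioning on $Z$ and using independence up to the coupling time gives
\[
Q_\omega\big[\tau(\widehat L^-)\vee\tau(\widehat L^+)<\tau_{\widehat Z=Z}\big]
\le
\widehat\nu\big([\widehat x_0,\widehat L^-]\big)+\widehat\nu\big([\widehat L^+,\widehat x_2]\big),
\]
which Lemma \ref{LemMajorationNuBordsVallee} controls by $N^{-\e_1/4}$. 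This is the essential use of Lemma \ref{LemMajorationNuBordsVallee} — it controls $\widehat Z$ at a random even time, not merely $\widehat Z_0$ as in your first event. Your decomposition therefore needs this third contribution to be bounded by $N^{-\e_1/4}$ via the invariant measure, not dismissed as impossible, and then the total $5N^{-\e_1/4}$ follows exactly as you compute.
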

\begin{proof}
Due to Lemma \ref{LemInegaliteSupAtteinteLpm}, we have
for large $N$ for all $\omega \in \Delta_N \cap \Delta_N^{(R)}$,
\begin{eqnarray*}
&&
    Q_\omega\big[\tau_{\widehat Z=Z}>N^{1-\e_1/2}\big]
\\
& \leq &
    Q_\omega\big[\tau\big(\widehat L^-\big)\vee\tau\big(\widehat L^+\big)<\tau_{\widehat Z=Z}\big]
    +
    Q_\omega\big[\tau\big(\widehat L^-\big)\vee\tau\big(\widehat L^+\big)>N^{1-\e_1/2}\big]
\\
& \leq &
    Q_\omega\big[\tau\big(\widehat L^-\big)<\tau_{\widehat Z=Z},\, \widehat Z_0 <\widehat b(N)\big]
    +
    Q_\omega\big[\tau\big(\widehat L^+\big)<\tau_{\widehat Z=Z},\, \widehat Z_0 \geq \widehat b(N)\big]
    +4 N^{-\e_1/4}.
\end{eqnarray*}
Notice that under $Q_\omega$, $Z_0=\widehat b(N)\in (2\Z)$ by \eqref{eqLawUnderQ} and \eqref{eqDefWidehatbN},
and $\widehat Z_0\in(2\Z)$ by \eqref{eqLawUnderQ} and \eqref{eqDefWidehatNu}.
So the process $\big(\widehat Z_k-Z_k\big)_{k\in\N}$ starts at $\big(\widehat Z_0-\widehat b(N)\big)\in(2\Z)$
and only makes jumps belonging to $\{-2,0,2\}$,
and thus up to time $\tau_{\widehat Z=Z}-1$ it is $<0$ (resp. $>0$) on
$\big\{\widehat Z_0 <\widehat b(N)\big\}$ \big(resp. $\big\{\widehat Z_0 \geq \widehat b(N)\big\}$\big),
and in particular at time $\tau\big( \widehat L^-\big)$ on
$\big\{\tau\big(\widehat L^-\big)<\tau_{\widehat Z=Z},\, \widehat Z_0 <\widehat b(N)\big\}$
\big(resp. at time $\tau\big( \widehat L^+\big)$ on
$\big\{\tau\big(\widehat L^+\big)<\tau_{\widehat Z=Z},\, \widehat Z_0 \geq \widehat b(N)\big\}$\big).
This gives for large $N$ for all $\omega \in \Delta_N \cap \Delta_N^{(R)}$,
\begin{eqnarray*}
&&
    Q_\omega\big[\tau_{\widehat Z=Z}>N^{1-\e_1/2}\big]
\\
& \leq &
    Q_\omega\big[\tau\big(\widehat L^-\big)<\tau_{\widehat Z=Z},\, \widehat Z_{\tau(\widehat L^-)}< \widehat L^-\big]
    +
    Q_\omega\big[\tau\big(\widehat L^+\big)<\tau_{\widehat Z=Z},\, \widehat Z_{\tau(\widehat L^+)} > \widehat L^+\big]
    +4 N^{-\e_1/4}
\\
& \leq &
    Q_\omega\big[\tau\big(\widehat L^-\big)<\tau_{\widehat Z=Z},\, \widehat Z_{2\lfloor\tau(\widehat L^-)/2\rfloor} \leq \widehat L^-\big]
    +
    Q_\omega\big[\tau\big(\widehat L^+\big)<\tau_{\widehat Z=Z},\, \widehat Z_{2\lfloor \tau(\widehat L^+)/2\rfloor} \geq \widehat L^+\big]
\\
&&
    +4 N^{-\e_1/4}
\\
& \leq &
    \widehat \nu\big(\big[\widehat x_0, \widehat L^-\big]\big)
    +
    \widehat \nu\big(\big[\widehat L^+, \widehat x_2\big]\big)
    +4 N^{-\e_1/4},
\end{eqnarray*}
where the last inequality comes from the fact that
$
    Q_{\omega}\big(\widehat Z_{2k}=x\big)
=
    P_{\widehat \omega}^{\widehat \nu}\big(\widehat Z_{2k}=x\big)
=
    \widehat \nu(x)
$
for every $x\in\Z$ and every (deterministic) $k\in\N$ as explained after \eqref{eqDefWidehatNu},
and from the independence of $\widehat Z$ with $Z$ and then $\tau(.)$ up to $\tau_{\widehat Z=Z}$.
Now, applying Lemma \ref{LemMajorationNuBordsVallee},
this gives \eqref{InegProbaRencontre} for large $N$ for every $\omega \in \Delta_N \cap \Delta_N^{(R)}$.

Due to \eqref{eqLawUnderQ} and Lemma \ref{LemmaProaResterDansVallee},
for large $N$ for every $\omega \in \Delta_N \cap \Delta_N^{(R)}$, \eqref{InegProbaExit} holds.
\end{proof}

\begin{proof}[Proof of Proposition \ref{Lemmeannexe1Bis}]
Recall that we have fixed $\delta\in(0,1)$ and that
\eqref{eqProbaLiminfBN} comes from Lemma \ref{LemmaProbaDeltaN}.
Let us prove \eqref{PZ=b}.
To this aim, we fix $(y_1, \dots, y_r)\in (2\Z)^r$.
Let $N_1\in\N$ be such that $N_1\geq\max(N_2,N_3,N_4)$
and such that for every $N\geq N_1$,
$
    \e_3^{-1}(\log N)^2 [N^{-\e_5/2}+2N^{-2\e_1}]
\leq
    \e_6^{-1}
$,
$
    N^{-(\e_1\wedge \e_2\wedge\e_4)/4}
\leq
    1/8
$,
$
    N^{1-\e_1/3}
\geq
    N^{1-\e_1}+N^{1-\e_1/2}
$
and
$
    5 N^{-\e_1/4}+2e^{2a_0}N^{-\e_1}
\leq
    \e_6e^{-a_0}/6
$,
recalling $a_0=\log((1-\varepsilon_0)/\varepsilon_0)$.
Now, we would like to give a lower bound for $P_\omega^{y_j}\big[Z_n=\widehat b(N)\big]$ for $n$ even. Recall \eqref{Deltandef} and
\eqref{DeltaNRdef}.
Let $N\geq N_1$,  $\omega \in \Delta_N \cap \Delta_N^{(R)}$,
$j\in\{1,\dots,r\}$,
and $n\in(2\N)$, with $n\in[N^{1-\e_1}+N^{1-\e_1/2},N]$.
We have by the strong Markov property,
\begin{eqnarray}
    P_\omega^{y_j}\big[Z_n=\widehat b(N)\big]
& \geq &
    P_\omega^{y_j}\big[Z_n=\widehat b(N), \tau\big(\widehat b(N)\big)\leq N^{1-\e_1}\big]
\nonumber\\
& = &
    E_\omega^{y_j}\big[{\bf 1}_{\{\tau(\widehat b(N))\leq N^{1-\e_1}\}}
        P_\omega^{\widehat b(N)}\big(Z_k=\widehat b(N)\big)_{|k=n-\tau(\widehat b(N))} \big]
\nonumber\\
& \geq &
    P_\omega^{y_j}\big[\tau\big(\widehat b(N)\big)\leq N^{1-\e_1}\big]
    \inf_{k\in[N^{1-\e_1/2}, N]\cap(2\N)}P_\omega^{\widehat b(N)}\big(Z_k=\widehat b(N)\big)\nonumber\\
&\geq &
    \left(1- N^{-(\e_1\wedge \e_2)/4}-N^{-\varepsilon_1/4}\right)
    \inf_{k\in[N^{1-\e_1/2}, N]\cap(2\N)}P_\omega^{\widehat b(N)}\big(Z_k=\widehat b(N)\big)
    \hphantom{aaa}
\label{InegProbaPyjZn}
\end{eqnarray}
because $\widehat b(N)$ and $y_j$ are even (see \eqref{eqDefWidehatbN})
and then $\tau\big(\widehat b(N)\big)$ is also even under $P_\omega^{y_j}$, and where
we used Lemma \ref{LemmaTempsAtteintebN} in the last line.
Moreover, for $k\in[N^{1-\e_1/2}, N]\cap(2\N)$,
\begin{eqnarray}
    P_\omega^{\widehat b(N)}\big(Z_k=\widehat b(N)\big)
& = &
    Q_\omega\big(Z_k=\widehat b(N)\big)
\nonumber\\
& \geq &
    Q_\omega\big(Z_k=\widehat b(N), \tau_{\widehat Z=Z}\leq N^{1-\e_1/2}, \tau_{exit}> N\big)
\nonumber\\
& = &
    Q_\omega\big(\widehat Z_k=\widehat b(N), \tau_{\widehat Z=Z}\leq N^{1-\e_1/2}, \tau_{exit}> N\big)
\nonumber\\
& \geq &
    Q_\omega\big(\widehat Z_k=\widehat b(N)\big) -Q_\omega\big(\tau_{\widehat Z=Z}> N^{1-\e_1/2}\big)-Q_\omega\big( \tau_{exit}\leq N\big)
\nonumber\\
& \geq &
    \widehat\nu \big(\widehat b(N)\big) -5 N^{-\e_1/4}-2e^{2a_0}N^{-\e_1},
\label{InegProbaZhatBhatk}
\end{eqnarray}
where we used \eqref{eqLawUnderQ} in the first and last line,
$Z_k=\widehat Z_k$ for $k\in\big[\tau_{\widehat Z=Z}, \tau_{exit}\big)$  under $Q_\omega$ in the third line,
and $Q_\omega\big(\widehat Z_k=x\big)=P_{\widehat \omega}^{\widehat \nu}\big(\widehat Z_k=x\big)=\widehat \nu(x)$
since $k$ is even,
\eqref{InegProbaRencontre} and \eqref{InegProbaExit} in the last line since $N\geq N_4$.

Notice that
$
    \widehat \mu(2\Z)
=
    e^{-V(\widehat b(N))}
    \sum_{i=\widehat x_0}^{\widehat x_2-1}e^{-[V(i)-V(\widehat b(N))]}
$,
with
$$
    \sum_{i=\widehat x_0}^{-1}e^{-[V(i)-V(\widehat b(N))]}
\leq
    |\widehat x_0| N^{-\e_5/2}
\leq
    \e_3^{-1}(\log N)^2 N^{-\e_5/2}
\leq
    \e_6^{-1}
$$
since $N\geq N_1$, $\omega\in\Delta_N^{(3)}$
and thanks to \eqref{eqInfVentrex0et0}.

Moreover, by \eqref{InegMinVThetaX2},
$
    \sum_{i=\theta_N^{(R)}}^{\widehat x_2-1}e^{-[V(i)-V(\widehat b(N))]}
\leq
    2\e_3^{-1}(\log N)^2 N^{-2\e_1}
\leq \e_6^{-1}
$ because $N\geq N_1$. Finally,
$
        \sum_{i=0}^{\theta_N^{(R)}-1} e^{-\big[V(i)-V\big(\beta_N^{(R)}\big)\big]}
        \leq
        \e_6^{-1}
$
since $\omega \in \Delta_N^{(6,R)}$ (see \eqref{Delta6}).
Moreover, $\big|V\big(\widehat b(N)\big)-V\big(\beta_N^{(R)}\big)\big|\leq a_0$.
Hence,
$
    \widehat \mu(2\Z)
\leq
    3\e_6^{-1}e^{a_0}
    e^{-V(\widehat b(N))}
$.
Moreover,
$\widehat \mu\big(\widehat b(N)\big)\geq e^{-V(\widehat b(N))}$
since $\widehat x_0< \widehat b(N)<\widehat x_2$,
and $\widehat b(N)$ is even by \eqref{eqDefWidehatbN}, so by \eqref{eqDefWidehatNu},
$
    \widehat \nu\big(\widehat b(N)\big)
=
    \widehat \mu\big(\widehat b(N)\big)/\widehat \mu(2\Z)
\geq
    \e_6e^{-a_0}/3
$.
This, \eqref{InegProbaPyjZn} and \eqref{InegProbaZhatBhatk} give
for $N\geq N_1$,
$$
    \forall \omega\in \Delta_N \cap \Delta_N^{(R)},
    \forall n\in\big[N^{1-\e_1/3},N\big]\cap(2\N),
    \forall j\in\{1,\dots ,r\},
\quad
    P_\omega^{y_j}\big[Z_n=\widehat b(N)\big]
\geq
    \e_6e^{-a_0}/8.
$$
The proof is similar for $\omega\in \Delta_N \cap \Delta_N^{(L)}$ by symmetry.
This, combined with Lemma \ref{LemmaProbaDeltaN},
ends \eqref{PZ=b} with $c(\delta)=\e_6e^{-a_0}/8>0$ and $\e(\delta)=\e_1/3$.
To prove that
this remains true if $(2\Z)^r$ and $2\N$ are replaced respectively by
$(2\Z+1)^r$ and $2\N+1$,
we just condition
$P_\omega^{y_j}\big[Z_n=\widehat b(N)\big]$
by $Z_1$, and apply the Markov property and \eqref{PZ=b}
to $(y_1\pm 1,\dots,y_r\pm 1)$.
\end{proof}
{\bf Acknowledgement}
A part of this work was done while AD and NG were visiting Brest. We thank ANR MEMEMO~2 (ANR-10-BLAN-0125) and the
LMBA, University of Brest for its hospitality.
We are grateful to an anonymous referee for comments which helped improve the presentation of the paper.


\bibliographystyle{alpha}

\end{document}